\documentclass{amsart}
\usepackage{amssymb}
\usepackage{enumitem}
\usepackage{hyperref}

\usepackage{biblatex}
 \newtheorem{thm}{Theorem}[section]

 \newtheorem{lem}[thm]{Lemma}
 \newtheorem{prop}[thm]{Proposition}
 \theoremstyle{definition}
 \newtheorem{defn}[thm]{Definition}
 \theoremstyle{remark}
 
 \newtheorem{ex}[thm]{Example}
 \numberwithin{equation}{section}

\usepackage{enumitem}
\usepackage{mathtools}
\DeclareMathOperator*{\esssup}{ess\,sup}
\DeclareMathOperator*{\essinf}{ess\,inf}

\begin{document}
\title[Noncommutative Down Spaces]{Noncommutative Down Spaces}
\author[Alejandro {Santacruz Hidalgo}]{Alejandro {Santacruz Hidalgo}}

\address{%
Pozna\'n University of Technology \\ 
Institute of Mathematics \\
Piotrowo 3A, 60-965 Pozna\'n\\
Poland}

\email{asantacr@uwo.ca}
\subjclass{Primary 46L51; Secondary 46E30, 46B42}

\keywords{Von Neumann algebras, noncommutative integration, ordered core, core decreasing function, strongly symmetric space, level function, down space}

\date{June 3, 2026}

\begin{abstract}
Given a von Neumann algebra with a semifinite normal faithful trace, the notion of a core decreasing operator is introduced. This construction depends on a family of commuting projections called an ordered core. With these tools in place, a version of the level function of a measurable operator is introduced. The down space of a strongly symmetric space of $\tau$-measurable operators is defined.
\end{abstract}

\maketitle

\section{Introduction}\label{introduction} 
A theory of noncommutative integration was started in the early fifties by Segal \cite{Segal53}, laying the foundations for noncommutative Lebesgue spaces. Here, a von Neumann algebra takes the role of measure space, and a semifinite trace is the substitute for the integral. In \cite{ovcinnikov70}, Ov\v cinnikov extended interpolation results of rearrangement invariant spaces due to Calder\'on \cite{calderon} to this noncommutative setting. A more general method to construct noncommutative symmetric spaces was given in \cite{dodds89}, where the notion of $\tau$-measurable operators (due to Nelson \cite{nelson74}) was used. Since then, several properties of symmetric spaces of $\tau$-measurable operators have been studied. For instance, a theory of K\"othe duality in \cite{dodskothe} or geometric properties of these spaces in \cite{kaminska17}.

In the classical integration theory on a measure space, there is a family of function spaces closely related to rearrangement invariant spaces, called the down space. In the case of the $L^{p}$ spaces over $[0,\infty)$ with the Lebesgue measure, the down space $(L^p)^o$ is given by the restricted K\"othe dual
$$
\|f\|_{(L^p)^o} = \sup \left\{ \int_0^\infty f \left|g\right|: \|g\|_{L^{p'}} \leq 1 \text{ and } g \text{ is nonincreasing}  \right\} = \|f^o\|_{L^{p}}, 
$$
here $1/p + 1/p' = 1$ and $f^o$ is a nonnegative nonincreasing function called \textit{the level function of} $f$. For locally integrable functions, $f^o$ is defined by requiring that the function $s \mapsto \int_{0}^s f^o$ is the least concave majorant of the function $s \mapsto \int_0^s |f|$. The construction of the level function is due to Halperin \cite{halperin}, with an independent proof by Lorentz \cite{lorentz}. The level function provides an improvement of H\"older's inequality in the presence of monotonicity since the inequality $\int_0^\infty |f|g \leq \|f^o\|_{L^p}\|g\|_{L^{p'}} $ holds whenever $g$ is nonincreasing. 

Since then, the level function and the down space construction have been extended to general measures over the real numbers and their properties studied extensively (see \cite{sinnamon94}, \cite{sinnamon01},\cite{mastylo},\cite{mastylo06}). The level function has been used to give formulas for the dual spaces of Lorentz and Orlicz-Lorentz spaces in \cite{kaminsma19} and \cite{tag24}, to prove weighted Hardy and Fourier inequalities \cite{sinnamonfourier03}, to transfer monotonicity (from kernel to weight) in weighted norm inequalities for general positive integral operators \cite{sinnamon03}, and to provide equivalent norms for traditional and abstract Ces\`aro spaces that facilitate interpolation of these spaces and of their duals \cite{lesnik2016}.

Recently, in \cite{coredecreasing}, the level function and the down space construction were extended to general measure spaces by developing a theory of generalized decreasing functions. These decreasing functions rely on the notion of a measure space with an ordered core, introduced in \cite{sinnamon22} to study abstract Hardy operators. The flexibility provided by choosing a suitable ordered core was applied in \cite{santacruz2024} to study endpoint cases of the abstract Hardy's inequality and to provide new proofs for necessary and sufficient conditions for Hardy's inequality to hold in a metric measure space. 

The purpose of this paper is to provide a theory of down spaces for $\tau$-measurable operators and to extend the level function construction and generalized decreasing functions to this noncommutative setting. In Section 2, we set out some necessary background and fix the notation used throughout the paper. In Section 3, we introduce our main tool, the ordered core, to encode the monotonicity properties of our spaces and study some of their basic properties. In Section 4, we introduce the notion of morphism between ordered cores and prove that each morphism induces a linear map between the von Neumann algebras. Section 5 describes a functional connection between core decreasing operators and nonnegative nonincreasing functions in the half line. Our main results are in Section 6, where we construct a noncommutative level function to define noncommutative down spaces.

\section{Notation and Background}\label{background}
Let $H$ be a complex Hilbert space equipped with an inner product $\langle , \rangle$, $B(H)$ is the space of bounded linear operators of $H$, and its identity element will be denoted $1_{B(H)}$. For a nonempty subset $A$ of $B(H)$, the commutant of $A$ is denoted by $A'$ is defined as $A' = \{ x \in B(H): xy = yx, \, \forall y \in A  \}$. A von Neumann algebra $\mathcal{M}$ is a $*$-subalgebra of $B(H)$ such that $\mathcal{M}'' = \mathcal{M}$. If $A$ is a $*$-subalgebra of $B(H)$, then $A''$ is the von Neumann algebra generated by $A$. A von Neumann algebra $\mathcal{M}$ is \textit{abelian} if $\mathcal{M} \subseteq \mathcal{M}'$.

A closed linear operator $x: \mathfrak{D}(x) \to H$ defined on a dense linear subspace of $H$, is called self adjoint if $x^* = x$, normal if $x^*x = xx^*$, and unitary if $xx^*=1_{B(H)} = x^*x$. A self adjoint operator is positive if $\langle x \xi,\xi\rangle \geq 0$ for all $\xi \in \mathfrak{D}(x)$. If $A$ is a family of closed densely defined operators, by $A_h$ we mean its collection of self adjoint operators, by $A^+$ we mean its collection of positive operators, and by $U(A)$ its collection of unitary operators. If $x,y$ are positive densely defined closed operators, we say that $x \leq y$ if $\mathfrak{D}(y) \subseteq \mathfrak{D}(x)$ and $\langle x \xi , \xi\rangle \leq \langle y \xi , \xi\rangle$ for all $\xi \in \mathfrak{D}(y)$.

For a partially ordered set $(R,\leq)$ and $\{x_\alpha\}$ an increasing net in $R$ such that $\sup_{\alpha} x_{\alpha}$ exists in $R$, we write $x_\alpha \uparrow x$. We write $x_\alpha \downarrow x$ if $\{x_\alpha\}$ is a decreasing net and $x = \inf_{\alpha} x_{\alpha}$.

Given a von Neumann algebra $\mathcal{M}$, we denote the set of projections by $P(\mathcal{M})$, that is, $p \in \mathcal{M}$ such that $p^* = p = p^2$. For $p,q \in P(\mathcal{M})$, we say that $p \leq q$ if $p(H) \subseteq q(H)$ or, equivalently, if $pq = p = qp$. For an increasing net of projections $\{p_{\alpha}\} \in \mathcal{M}$ and a decreasing net of projections $\{q_{\alpha}\} \in \mathcal{M}$ we define $\vee p_{\alpha}$ to be the projection onto $\overline{\cup_{\alpha} p_{\alpha}(H)}$ and $\wedge q_{\alpha}$ to be the projection onto $\cap_{\alpha} p_{\alpha}(H)$. It is well known that $p_{\alpha} \uparrow \vee p_{\alpha} \in P(\mathcal{M})$ and $q_{\alpha} \downarrow q \in P(\mathcal{M})$. An operator $v \in B(H)$ is a partial isometry if $\|v\xi\|_{H} = \|\xi\|_{H}$ for all $\xi \in \ker(v)^\perp$ or equivalently if $v^*v$ is a projection. We write $p \sim q$ to mean that two projections are equivalent, that is, if there exists a partial isometry $v \in \mathcal{M}$ such that $p = v^*v$ and $q = vv^*$.

A mapping $\tau:\mathcal{M}^+ \to [0,\infty]$ is a \textit{trace} if for all $x,y \in \mathcal{M}^+$, all $\alpha \in [0,\infty)$, and all $u \in U(\mathcal{M})$ we have that $\tau(x+y) = \tau(x) + \tau(y)$, $\tau(\alpha x) = \alpha \tau(x)$ and $\tau(u^*xu) = \tau(x)$. In addition, 
\begin{itemize}
    \item If $\tau(x) = 0$ implies $x = 0$, we say that the trace is \textit{faithful}.
    \item If $\tau(x) = \sup_{\alpha} \tau(x_n)$ whenever $x_n \uparrow x$, we say that the trace is \textit{normal}.
    \item If for each $x$, such that $\tau(x) > 0$, there exists $z \in \mathcal{M}^+$ such that $z \leq x$ and $0 < \tau(z) < \infty$, we say that $\tau$ is \textit{semifinite}.
\end{itemize}

If the von Neumann algebra is abelian, the following result \cite[Theorem~7.21]{abelianVN} will be useful.
\begin{thm}\label{abeliano}
    If $(\mathcal{N},\nu)$ is an abelian von Neumann algebra with a semifinite normal faithful trace $\nu$. Then, there exists a localizable measure space $(X,\Sigma,\theta)$ and a $*$-isomorphism $\Phi: \mathcal{N} \to L^{\infty}(X,\Sigma,\theta)$ such that $\tau(x) = \int_{X} \Phi(x) \, d\theta$ for all $x \in \mathcal{N}$. 
\end{thm}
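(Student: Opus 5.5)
This is the classical representation theorem for abelian von Neumann algebras carrying a semifinite normal faithful trace, cited from \cite[Theorem~7.21]{abelianVN}. The plan is to reduce to the finite (σ-finite) case by cutting with mutually orthogonal projections of finite trace, and then glue the resulting measure spaces.

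\emph{Step 1: finite pieces.} By semifiniteness and Zorn's lemma, choose a maximal family $\{e_i\}_{i\in I}$ of mutually orthogonal nonzero projections in $\mathcal{N}$ with $\tau(e_i)<\infty$. Maximality forces $\sum_i e_i = 1$. Otherwise $1-\sum_i e_i$ is a nonzero positive element, and since $\tau$ is faithful it has positive trace. Semifiniteness then gives $0\neq z\le 1-\sum e_i$ with $\tau(z)<\infty$, and a spectral projection of $z$ of finite trace would contradict maximality.

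\emph{Step 2: represent each piece.} Each $\mathcal{N}e_i$ is an abelian von Neumann algebra, and $\tau|_{\mathcal{N}e_i}$ is a finite normal faithful trace on it. Apply the Gelfand transform to get $\mathcal{N}e_i\cong C(K_i)$ with $K_i$ hyperstonean. The functional $\tau$ restricted to $\mathcal{N}e_i$ is a normal positive functional, so by the Riesz theorem it is integration against a finite regular Borel measure $\mu_i$ on $K_i$. Normality and faithfulness imply that $\mu_i$ is a normal measure with full support. Hence every Borel set differs from a clopen set by a $\mu_i$-null set, and the map $C(K_i)\to L^\infty(K_i,\mu_i)$ is a $*$-isomorphism onto $L^\infty(K_i,\mu_i)$. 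Surjectivity is the delicate point: I would use that $C(K_i)$ is monotone complete and that normality of $\mu_i$ makes nowhere dense sets null. By construction, $\tau(x)=\int_{K_i}\Phi_i(x)\,d\mu_i$.

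\emph{Step 3: glue.} Let $X=\bigsqcup_i K_i$, with $\Sigma$ the sets whose traces on every $K_i$ are Borel, and $\theta(E)=\sum_i\mu_i(E\cap K_i)$. Define $\Phi(x)=(\Phi_i(xe_i))_i$. Because $\sup_i\|xe_i\|=\|x\|$ and each $\Phi_i$ is an isometric $*$-isomorphism, $\Phi$ is an injective $*$-homomorphism into $L^\infty(X,\theta)$. Surjectivity onto $L^\infty(X,\theta)$ follows since $\mathcal{N}\cong\prod_i\mathcal{N}e_i$ as an $\ell^\infty$-direct sum, which in turn uses $\sum_i e_i=1$ and the fact that bounded families are realized by strong sums.

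The trace formula on $\mathcal{N}^+$ comes from normality: $\tau(x)=\sum_i\tau(xe_i)=\sum_i\int\Phi_i(xe_i)\,d\mu_i=\int_X\Phi(x)\,d\theta$, by monotone convergence on both sides. Finally, $(X,\Sigma,\theta)$ is a disjoint union of finite measure spaces, hence semifinite. Its measure algebra is isomorphic to $P(\mathcal{N})$, which is a complete lattice, so $(X,\Sigma,\theta)$ is localizable.

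The main obstacle is Step 2: showing that the Gelfand image of a finite abelian von Neumann algebra coincides with $L^\infty$ of the trace measure. This needs the normal-measure and hyperstonean argument, or equivalently the representation of $\mathcal{N}e_i$ as a maximal abelian algebra with a cyclic vector built from $\tau$.
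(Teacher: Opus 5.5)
The paper does not prove this statement: it is imported from \cite[Theorem~7.21]{abelianVN} and used as a black box, so there is no in-paper argument to compare yours with. Your sketch is the standard proof of that result, and it is correct. The exhaustion of $1$ by orthogonal finite-trace projections, the identification of each corner $\mathcal{N}e_i\cong C(K_i)$ with $L^\infty(K_i,\mu_i)$ for the normal, fully supported Riesz measure $\mu_i$, and the direct-sum gluing with the trace formula on $\mathcal{N}^+$ all go through as you describe.

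Two simplifications are available.
\begin{itemize}
\item In Step~2, once you know every Borel set is $\mu_i$-a.e.\ equal to a clopen set, surjectivity is immediate. (That fact is where extremal disconnectedness enters, via the Baire property, and where normality makes the meager remainder null.) The image of the isometry $C(K_i)\to L^\infty(K_i,\mu_i)$ is closed and contains all simple functions, which are norm-dense, so no separate monotone-completeness argument is needed.
\item In Step~3, localizability also follows from the general fact that a direct sum of finite measure spaces is strictly localizable, without passing through $P(\mathcal{N})$.
\end{itemize}

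The formula in the statement (where $\tau$ should read $\nu$) is only meaningful for $x\in\mathcal{N}^+$, which is exactly what you verify.

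Finally, consider the paper's applications to $\mathcal{M}_{\mathcal{A}}$ with $\mathcal{A}$ $\sigma$-bounded and full. There you can take the countable family $e_n=p_n-p_{n-1}$ with $p_n\uparrow 1$ in $\mathcal{A}$ (with $p_0=0$, discarding zero terms). Your construction then directly produces the $\sigma$-finite measure space that Lemma~\ref{coremedida} assumes.
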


A closed densely defined operator $x: \mathfrak{D}(x) \to H$ is affiliated with the von Neumann algebra $\mathcal{M}$ if $u(\mathfrak{D}(x)) \subseteq \mathfrak{D}(x)$ for all $u \in U(\mathcal{M'})$, in this case, we write $x \eta\mathcal{M}$. We say that a closed densely defined operator affiliated with $\mathcal{M}$ is \textit{measurable} if there exists a sequence of projections $\{p_n\} \in P(\mathcal{M})$ such that $p_n \uparrow 1$, $p_n(H) \subseteq \mathfrak{D}(x)$, and for all $n \in \mathbb{N}$, if $q \in P(\mathcal{M})$ such that $q \leq p_n^\perp$ and $q \sim p_n^\perp$, it follows that $q = p_n^\perp$. The sequence $\{p_n\}$ is called a \textit{determining sequence} of $x$. By $S(\mathcal{M})$, we mean the collection of measurable operators with respect to $\mathcal{M}$, which is a $*$-algebra under the closure of the operators for sum and product. 

For an operator $x \in S(\mathcal{M})_h$, we denote by $e^x(\cdot)$ its spectral measure, that is, the unique projection valued measure from $\text{Borel}(\mathbb{R}) \to P(\mathcal{M})$ such that $x = \int_{\mathbb{R}} r \, de^x(r)$. The positive and negative parts of $x$ are given by $x^+ = \int_{[0,\infty)} r \, de^x(r)$ and $x^- = -\int_{(-\infty,0]} r \, de^{x}(r)$, with $x = x^+ - x^-$. If $f:[0,\infty) \to \mathbb{C}$ is a Borel measurable function, bounded on compact sets, then $f(x)$ is defined as $\int_{\mathbb{R}} f(r) \, de^{x}(r)$ and $f(x) \in S(\mathcal{M})$ (See \cite[Proposition~2.2.10]{dodsbook}).

If $x \in S(\mathcal{M})$, the operator $|x|$ is defined as the positive square root of $x^*x$, it characterized as the unique positive operator $|x| \in S(\mathcal{M})^+$ such that $\mathfrak{D}(x) = \mathfrak{D}(|x|)$ and $\|x\xi\|_{H} = \||x| \xi \|_{H}$ for all $\xi \in \mathfrak{D}(x)$. Also, there exists a partial isometry $v \in \mathcal{M}$ such that $x = v|x|$, $|x| = v^*x$ and $x^* = v^*|x^*|$, this factorization is called the \textit{polar decomposition} of $x$.

If $\mathcal{M}$ is a von Neumann algebra, equipped with a semifinite normal faithful trace $\tau$, and $x$ is a closed operator affiliated with $\mathcal{M}$, we say that $x$ is $\tau$-measurable if for every $\epsilon > 0$ there exists $p \in P(\mathcal{M})$ such that $p(H) \subseteq \mathfrak{D}(x)$ and $\tau(p^\perp) < \epsilon$. By $S(\tau)$ we denote the $*$-algebra of $\tau$-measurable operators. It follows from the definitions that $\mathcal{M} \subseteq S(\tau) \subseteq S(\mathcal{M})$. The sets
$$
V(\epsilon,\delta) = \left\{ x \in S(\tau): \tau\big( e^{|x|}(\epsilon,\infty)\big)  < \delta  \right\}, \quad \epsilon,\delta > 0,
$$
define a neighbourhood base at zero for a metrizable Hausdorff topology called the \textit{measure topology} $\mathcal{T}$ on $S(\tau)$. Under the measure topology, $S(\tau)$ is a complete topological $*$-algebra with $\mathcal{M}$ as a dense subset. A sequence $x_n \to 0$ in the measure topology if $\tau\big(e^{|x_n|}(\epsilon,\infty) \big) \to 0$ as $n \to \infty$. 

If $x \in S(\tau)$, its \textit{distribution function} is defined by
$$
d(s;x) = \tau\big( e^{|x|}(s,\infty)\big), \quad s \geq 0,
$$
and its \textit{singular value function} is defined by
$$
\mu(t;x) = \inf \left\{ s \geq 0: d(s;x) \leq t \right\}.
$$
The trace can be extended to $S(\tau)^+$ by the formula $\tau(x) = \int_{0}^\infty \mu(\cdot,x)$ and the extended trace remains semifinite, normal, and faithful. The space $L^{1}(\tau)$ is the collection $\{ x \in S(\tau): \tau(|x|) < \infty \}$ with the norm $\|x\|_{L^{1}(\tau)} = \tau(|x|)$. The trace extends uniquely to a positive linear functional on $L^{1}(\tau)$ satisfying $\tau(xy) = \tau(yx)$ for all $y \in \mathcal{M}$.  

A Banach space $E \subseteq S(\tau)$, equipped with a norm $\|\cdot\|_{E}$ is a \textit{strongly symmetric space} of $\tau$-measurable operators if for any $x \in S(\tau)$ and $y \in E$ such that 
$$
\int_{0}^s \mu(\cdot \ ; x) \leq \int_{0}^s \mu(\cdot \ ; y), \quad \forall s \geq 0,
$$
we have that $x \in E$ and $\|x\|_{E} \leq \|y\|_{E}$. 
If $E$ is a strongly symmetric space, then $\|uxv\|_{E} \leq \|u\|_{B(H)}\|v\|_{B(H)} \|x\|_{E}$ for every $u,v \in \mathcal{M}$. Moreover, $\|x\|_E = \||x|\|_{E} = \|x^*\|_{E} = \||x^*|\|_{E}$.

For a strongly symmetric space $E$, its K\"othe dual is defined by the seminorm
$$
\|x\|_{E^\times} = \sup \left\{ \tau(|yx|): \|y\|_{E} \leq 1 \right\}.
$$
The collection $E^\times$ is $\{ x \in S(\tau): \|x\|_{E^\times} \} < \infty$. The spaces $L^{1}(\tau)$ and $(\mathcal{M},\|\cdot\|_{B(H)})$ are strongly symmetric spaces, moreover $L^{1}(\tau) = \mathcal{M}^\times$ and $L^{1}(\tau)^{\times \times} = L^{1}(\tau)$.


See \cite{dodsbook} for more details.

\section{Ordered cores}\label{cores}
Throughout this section, suppose that $\mathcal{M}$ is a von Neumann algebra, equipped with a semifinite normal faithful trace $\tau$. We study a subset of the projections of a von Neumann algebra called an ordered core. This object will encode the monotonicity properties of our space and allows us to extend the definition of decreasing functions to the von Neumann algebra setting. In the commutative case, a variation of this construction was used in \cite{sinnamon22},\cite{coredecreasing}, and \cite{santacruz2024}.

\begin{defn}\label{coredefinition}
A family $\mathcal{A} = \{p_i\}_{i \in I} \in P(\mathcal{M})$ is an \textbf{ordered core} provided:
\begin{enumerate}[label=(\roman*)]
    \item \label{zerocore}The zero projection belongs to $\mathcal{A}$.
    \item \label{totalorder} For each $i,j \in I$, $p_i \leq p_j$ or $p_j \leq p_i$.
    \item \label{finitetrace} For each $i \in I$, $\tau(p_i) < \infty$.
\end{enumerate}
Let $H_\mathcal{A} = \overline{\cup_{i \in I} p_i(H)}$. We say that the ordered core is \textbf{$\sigma$-bounded} if there exists a countable subset $J \subseteq I$ such that $H_\mathcal{A} = \overline{\cup_{j \in J} p_j(H)}$. We say that the ordered core is \textbf{full} if $H_\mathcal{A} = H$. 
\end{defn}


We begin collecting some simple properties of ordered cores and the normal faithful trace $\tau$.
\begin{prop}\label{coreprop1}
    Let $\mathcal{M}$ be a von Neumann algebra, equipped with a semifinite normal faithful trace $\tau$, and an ordered core $\mathcal{A}$. 
    \begin{enumerate}[label=(\roman*)]
        \item \label{faithfulness} If $p,q \in \mathcal{A}$ and $\tau(p) = \tau(q)$, then $p = q$.
        \item \label{uniquenessincreasing} If $\{p_n\},\{q_n\}$ are increasing sequences in $\mathcal{A}$ such that $p_n \uparrow p$ and $q_n \uparrow q$ for some $p,q \in P(\mathcal{M})$ such that $\tau(p) = \tau(q)$, then $p = q$.
        \item \label{existencedecreasing} If $\{q_{n}\}$ a decreasing sequence $\mathcal{A}$. Then, $q = \wedge \{q_n\} \in P(\mathcal{M})$, and $\tau(q_{n}) \downarrow \tau(q)$.
        \item \label{uniquenessdecreasing} If $\{p_n\},\{q_n\}$ are decreasing sequences in $\mathcal{A}$ such that $p_n \downarrow p$ and $q_n \downarrow q$ for some $p,q \in P(\mathcal{M})$ such that $\tau(p) = \tau(q)$, then $p = q$.  
        \item \label{commutetrace} Let $p,q \in \mathcal{A}$ such that $q-p \geq 0$ and $x \in \mathcal{M}^+$ then $(q-p)x(q-p) \in \mathcal{M}^+$ and 
        \begin{equation}\label{commuteboundeq}
            \tau\big((q-p)x\big) = \tau\big((q-p)x(q-p)\big) = \tau\big(x(q-p)\big) \leq \|x\|_{B(H)} \tau(q-p).
        \end{equation}
        \item \label{orderpcore} If $x,y \in S(\mathcal{M})^+$ and $p \in \mathcal{A}$ such that $x \leq y$, then $\tau(xp) = \tau(pxp) = \tau(px)$ and $\tau(xp) \leq \tau(yp)$.
    \end{enumerate}
    
\end{prop}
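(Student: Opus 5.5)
The plan is to take the six items in order. The two tools throughout are total order plus faithfulness of $\tau$, and normality of $\tau$ combined with finiteness of the trace on core projections.

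For \ref{faithfulness}, use \ref{totalorder} of Definition~\ref{coredefinition}: without loss of generality $p\leq q$. Then $q-p$ is a projection and $\tau(q-p)=\tau(q)-\tau(p)=0$; the subtraction is legitimate because $\tau(q)<\infty$ by \ref{finitetrace}. Faithfulness gives $q=p$.

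For \ref{uniquenessincreasing}, first show that $p$ and $q$ are comparable. Given $m$, for each $n$ either $p_m\leq q_n$ or $q_n\leq p_m$. If $p_m\leq q_n$ for some $n$, then $p_m\leq q$. Otherwise $q_n\leq p_m$ for all $n$, so $q\leq p_m\leq p$.
\begin{itemize}
\item If the second alternative ever occurs, then $q\leq p$.
\item Otherwise every $p_m\leq q$, hence $p\leq q$.
\end{itemize}
Now suppose $q\leq p$. Normality gives $\tau(p)=\sup_n\tau(p_n)$, and this common value with $\tau(q)$ may be infinite, so we cannot simply subtract. Instead, argue via $p_n$. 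We have $\tau(p-q)\leq\tau(p_n\vee q - q)+\tau(p-p_n)$, which is awkward, so I would argue more directly. For each $m$, $p_m\wedge q$ is either $p_m$ or lies between some $q_n$ and $p_m$, by comparability within the core. If $p_m\not\leq q$, then $q_n\leq p_m$ for all $n$, so $q\leq p_m$, giving $\tau(q)\leq\tau(p_m)<\infty$. Then $\tau(p_{m'})\leq\tau(p)=\tau(q)\leq\tau(p_m)$ for $m'\geq m$, which forces $p_{m'}=p_m$ by \ref{faithfulness}. Hence $p=p_m$ and $\tau(p-q)=\tau(p)-\tau(q)=0$, so $p=q$. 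If instead every $p_m\leq q$, then $p\leq q$, and symmetrically the same dichotomy applied to the $q_n$ finishes the argument.

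For \ref{existencedecreasing}, the infimum of a decreasing net of projections exists, as noted in Section~\ref{background}. Since $q_1-q_n\uparrow q_1-q$ and $\tau(q_1)<\infty$, normality gives $\tau(q_1)-\tau(q_n)\uparrow\tau(q_1)-\tau(q)$. For \ref{uniquenessdecreasing}, the same comparability dichotomy as in \ref{uniquenessincreasing} gives $p\leq q$ or $q\leq p$. All traces are now finite, so equality of traces forces $\tau(q-p)=0$ and hence $p=q$.

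For \ref{commutetrace}, $e=q-p$ is a projection in $\mathcal{M}$ with finite trace. Then $exe=(x^{1/2}e)^*(x^{1/2}e)\geq0$. The products $ex$ and $xe$ lie in $L^1(\tau)$ because $e\in L^1(\tau)$. Using $\tau(yz)=\tau(zy)$ for $y\in\mathcal{M}$, we get $\tau(ex)=\tau(e\cdot ex)=\tau(ex e)$, and similarly $\tau(xe)=\tau(exe)$. Finally, $exe\leq\|x\|e$ gives the bound in \eqref{commuteboundeq}.

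For \ref{orderpcore}, I expect the unboundedness to be the main obstacle. I would handle it by truncation: take $x_k=x\,e^{x}[0,k]\uparrow x$, apply \ref{commutetrace} with $p=0$, and pass to the limit via normality of the extended trace, interpreting $\tau(xp)$ as $\tau(pxp)$. Monotonicity then follows from $pxp\leq pyp$.
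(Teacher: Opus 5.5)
Items (i)--(v) are correct. In (ii), your comparability dichotomy replaces the paper's identification of an infinite-trace limit with the projection onto $H_{\mathcal A}$: if some $p_m\not\le q$ then $q\le p_m$, which forces finite trace and stabilization of $\{p_m\}$; otherwise $p\le q$. In (iii), normality applied to $q_1-q_n\uparrow q_1-q$ is more elementary than the paper's detour through convergence in measure. The abandoned half-sentences in (ii) should simply be deleted.

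The gap is in (vi). Normality of the extended trace controls only the positive increasing sequence $px_kp$. It gives $\tau(px_kp)\uparrow\tau(pxp)$ once you check that $px_kp\uparrow pxp$ in $S(\tau)^+$, e.g.\ via $\langle px_kp\xi,\xi\rangle=\|x_k^{1/2}p\xi\|_H^2$. The operators $x_kp$ and $px_k$ are neither positive nor monotone, so normality says nothing about $\tau(x_kp)\to\tau(xp)$ or $\tau(px_k)\to\tau(px)$. ``Interpreting $\tau(xp)$ as $\tau(pxp)$'' therefore assumes exactly the first assertion of (vi) instead of proving it.

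For the non-positive operator $xp$, the symbol $\tau(xp)$ means the value of the linear extension of $\tau$ to $L^1(\tau)$, so it requires $xp\in L^1(\tau)$ (equivalently $px=(xp)^*\in L^1(\tau)$). In that case no truncation is needed. Use the cyclicity from (v), $\tau(yz)=\tau(zy)$ for $y\in\mathcal M$ and $z\in L^1(\tau)$, with $y=p$ and $z=xp$, resp.\ $z=px$. This gives $\tau(xp)=\tau((xp)p)=\tau(p(xp))=\tau(pxp)$ and $\tau(px)=\tau(p(px))=\tau((px)p)=\tau(pxp)$, which is essentially the paper's argument via the trace identity it cites. If you insist on truncating, the limit must be taken in $L^1(\tau)$, not by normality: $\|xp-x_kp\|_{L^1(\tau)}=\|e^{x}(k,\infty)\,xp\|_{L^1(\tau)}\to0$ by order continuity of the $L^1$-norm.

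Your reinterpretation is not harmless, since $\tau(pxp)<\infty$ does not imply $xp\in L^1(\tau)$. In $\mathcal M=M_2(L^\infty[0,1])$ with $\tau(a)=\int_0^1\mathrm{tr}\,a(t)\,dt$ and the ordered core $\{0,p\}$, take
\[
p=\begin{pmatrix}1&0\\0&0\end{pmatrix},\qquad x(t)=\begin{pmatrix}1&t^{-1}\\t^{-1}&t^{-2}\end{pmatrix}\geq 0.
\]
Then $pxp=p$, so $\tau(pxp)=1$, while $|xp|=(1+t^{-2})^{1/2}p$, so $\tau(|xp|)=\infty$. Accordingly, the monotonicity claim is best read as $\tau(pxp)\le\tau(pyp)$, which your argument does establish.
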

\begin{proof}
By the total order of $\mathcal{A}$, we may assume that $p \leq q$, this means that $q-p \geq 0$. Since $\tau(p-q) = \tau(p) - \tau(q) = 0$, we conclude that $y-x$ is a positive element with zero trace. We conclude that $y = x$ by the faithfulness of the trace and prove \ref{faithfulness}.

We now prove \ref{uniquenessincreasing}. By normality of the trace $\tau(p_n) \uparrow \tau(p)$ and $\tau(q_n) \uparrow \tau(q)$. If $\tau(p) = \infty$, then by item \ref{finitetrace} in \ref{coredefinition} we have that $p$ and $q$ must be the orthogonal projection onto $H_{\mathcal{A}}$. Therefore, we may assume that the projections $p$ and $q$ have finite trace. Let $\xi \in p(H) \setminus q(H)$, we must have that $p_n(\xi) \to \xi$ and $q_n(\xi) \not\to \xi$, with convergence in $H$. 

There exists some $N \in \mathbb{N}$ such that for all $n > N$, we must have that $p_n(\xi) \not\in q_n(H)$, otherwise we can extract subsequences $\{p_{n_j}\}$ and $\{q_{n_j}\}$ such that $p_{n_j}(\xi) \in q_{n_j}(H) \subseteq q(H)$. Since $p_{n_j} \to \xi$ and $q(H)$ is closed, then $\xi \in q(H)$ contradicting the choice of $\xi$. By the total order of $\mathcal{A}$, we conclude that $q_{n_j} \leq p_{n_j}$. Since $q_{n_j} \uparrow q$ and $p_{n_j} \uparrow p$, we get $q \leq p$. So $p-q$ is a positive element with zero trace; it follows that $p = q$, proving \ref{uniquenessincreasing}.  

To prove \ref{existencedecreasing}, let $\{q_n\}$ be a decreasing sequence in $\mathcal{A}$. Since $P(\mathcal{M})$ is a complete lattice, $q = \wedge \{q_n\} \in P(\mathcal{M})$. It is clear that the sequence $\{ q_n - q\}$ is decreasing, bounded above by $q_1 \in L^1(\tau)$ and $q_n - q \downarrow 0$. An application of \cite[Lemma~2.6.2]{dodsbook} shows that $q_n -q \to 0$ in the measure topology. By \cite[Corollary~2.5.8]{dodsbook} $\tau(q_n -q) \to 0$, thus $\tau(q_n) \downarrow \tau(q)$ and proves \ref{existencedecreasing}. 

To prove \ref{uniquenessdecreasing}, let $\{q_n\},\{p_n\}$ be decreasing sequences in $\mathcal{A}$ such that $p_n \downarrow p$, $q_n \downarrow q$, $\tau(p) = \tau(q)$. Suppose that $p \not= q$, seeking a contradiction.

By \ref{existencedecreasing} and the uniqueness of infima in $P(\mathcal{M})$, we get that $p = \wedge\{p_n\}$, $q = \wedge\{q_n\}$,  $\tau(p_n) \downarrow \tau(p)$ and $\tau(q_n) \downarrow \tau(q)$. If $\xi \in q(H) \setminus p(H)$, we may extract subsequences $\{p_{n_j}\},\{q_{n_j}\}$ such that $\xi \not\in p_{n_j}(H)$ and $\xi \in q_{n_j}(H)$. By total order, $p_{n_j} \leq q_{n_j}$. Taking infimum, we get that $p \leq q$. Therefore $q-p$ is a positive element with zero trace, it follows that $p = q$, which is a contradiction. Hence, $q(H) \subseteq p(H)$, hence $q \leq p$, by the faithfulness of $\tau$, we get the same contradiciton $q=p$. This proves \ref{uniquenessdecreasing}.

To prove \ref{commutetrace} let $p,q \in \mathcal{A}$ and $x \in \mathcal{M}^+$ then 
$$
\big((q-p)x(q-p)\big)^* = (q-p)^* x^* (q-p)* = (q-p) x (q-p),
$$
so $(q-p)x(q-p)$ is self adjoint. To show it is positive, let $\xi \in H$. Since $p<q$, we have that $ r= q-p$ is a projection. Hence,
\begin{align*}
    \langle rxr \xi , \xi \rangle &= \langle xr \xi , r\xi \rangle \leq \|x r \xi \|_{H} \|r \xi\|_{H} \leq \|x\|_{B(H)} \|r \xi\|_{H}^2 = \|x\|_{B(H)} \langle r \xi , r \xi \rangle\\
    &= \|x\|_{B(H)} \langle r \xi , \xi \rangle = \langle \|x\|_{B(H)} r \xi , \xi \rangle.  
\end{align*}
It follows that $rxr \leq \|x\|_{B(H)} r$, therefore $\tau(rxr) \leq \|x\|_{B(H)} \tau(r)$.

Notice that $\tau(q-p) \leq 2 \tau(q) < \infty$, therefore \cite[Theorem~1.15.8]{dodsbook} shows that $\tau\big((q-p)x\big) = \tau(x (q-p))$. For the final equality, we use the fact that $(q-p)^2 = q-p$ to get
$$
\tau\big((q-p)x\big) = \tau\big((q-p)(q-p)x\big) = \tau\big((q-p)x(q-p)\big).  
$$

To prove \ref{orderpcore}, notice that, for each $p \in \mathcal{A}$, we have that $p \in L^{1}(\tau)$. Therefore, $xp, px, pxp$ belong to $S(\tau)$. By \cite[Proposition~3.4.32]{dodsbook} we have
$$
\tau(xp) = \tau\big((xp)p\big) = \tau(p^{1/2}xp p^{1/2}) = \tau(pxp), 
$$
and
$$
\tau(px) = \tau\big(p(px)\big) = \tau(p^{1/2}pxp^{1/2}) = \tau(pxp).
$$

Finally, if $x \leq y$, it follows that $pxp \leq pyp$. Therefore, $\tau(pxp) \leq \tau(pyp)$. By the previous observation, we get that $\tau(xp) \leq \tau(yp)$ to complete the proof of \ref{orderpcore}.

\end{proof}

For any ordered core we associate the set of all tracial values of its elements; this set will play a crucial role in the sequel. 
$$
\Gamma_{\mathcal{A}} = \{ x \in [0,\infty): x = \tau(p) \text{ for some } p \in \mathcal{A} \}.
$$

\begin{defn}
    An ordered core $\mathcal{A}$ is said to be maximal if $\Gamma_{\mathcal{A}}$ is a closed subset of $[0,\infty)$. 
\end{defn}

We now show that an ordered core admits a unique maximal extension.

\begin{thm}\label{maximal}
    Let $\mathcal{A}$ be an ordered core, then there exists a unique ordered core $\mathcal{A}_M$ such that $\mathcal{A} \subseteq \mathcal{A}_M$ and $\Gamma_{\mathcal{A}_M}$ is the closure of $\Gamma_{\mathcal{A}}$ in $[0,\infty)$. 
\end{thm}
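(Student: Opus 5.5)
The plan is to build $\mathcal{A}_M$ by adjoining to $\mathcal{A}$ every projection that arises as a monotone limit of a sequence in $\mathcal{A}$. Set
$$
\mathcal{A}_M = \mathcal{A} \cup \{ \vee p_n : \{p_n\} \subseteq \mathcal{A} \text{ increasing}, \ \tau(\vee p_n)<\infty\} \cup \{ \wedge q_n : \{q_n\}\subseteq \mathcal{A} \text{ decreasing}\}.
$$

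\textbf{Tracial values.} First I check that $\Gamma_{\mathcal{A}_M}=\overline{\Gamma_{\mathcal{A}}}$. Let $t\in\overline{\Gamma_{\mathcal{A}}}\setminus\Gamma_{\mathcal{A}}$. Since $\tau$ is strictly monotone on the chain $\mathcal{A}$ by Proposition~\ref{coreprop1}\ref{faithfulness}, we may pick a sequence $p_n\in\mathcal{A}$ with $\tau(p_n)$ strictly monotone and converging to $t$. By the total order, a strictly increasing sequence of tracial values gives an increasing sequence of projections, and a strictly decreasing one gives a decreasing sequence. In the increasing case, normality of $\tau$ gives $\tau(\vee p_n)=t$. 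In the decreasing case, Proposition~\ref{coreprop1}\ref{existencedecreasing} gives $\tau(\wedge q_n)=t$. Conversely, every tracial value of a new element is a limit of values in $\Gamma_{\mathcal{A}}$. So $\Gamma_{\mathcal{A}_M}=\overline{\Gamma_{\mathcal{A}}}$.

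\textbf{Core axioms.} Axiom (i) is inherited from $\mathcal{A}$, and axiom (iii) holds because each new element has trace $t<\infty$. The main obstacle is axiom (ii), the total order of $\mathcal{A}_M$. The key point is that each limit projection is comparable with every $r\in\mathcal{A}$. Take $p=\vee p_n$ and $r\in\mathcal{A}$. If $r\ge p_n$ for all $n$, then $r\ge p$. Otherwise some $p_n\ge r$, and then $p\ge r$. The decreasing case is symmetric.

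To compare two limit projections $p$ and $p'$, I go through the elements of $\mathcal{A}$. Suppose $\tau(p)<\tau(p')$. Approximate $p'$ by its defining sequence: for large $n$, $\tau(p'_n)>\tau(p)$, or in the decreasing case every $p'_n$ has trace exceeding $\tau(p)$. By the previous paragraph each $p'_n$ is comparable with $p$, and $p'_n\le p$ is ruled out by the trace inequality, so $p\le p'_n$. Passing to the limit in $n$ gives $p\le p'$. If $\tau(p)=\tau(p')$, the same comparisons with elements of $\mathcal{A}$ give $p\le p'$ and $p'\le p$, hence $p=p'$. This mirrors the arguments in Proposition~\ref{coreprop1}\ref{uniquenessincreasing} and \ref{uniquenessdecreasing}, and extends them to the mixed case of one increasing limit and one decreasing limit.

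\textbf{Uniqueness.} Let $\mathcal{B}$ be any ordered core with $\mathcal{A}\subseteq\mathcal{B}$ and $\Gamma_{\mathcal{B}}=\overline{\Gamma_{\mathcal{A}}}$, and take $b\in\mathcal{B}$ with $t=\tau(b)$.
\begin{itemize}
\item If $t\in\Gamma_{\mathcal{A}}$, then $b$ equals the element of $\mathcal{A}$ with trace $t$, by Proposition~\ref{coreprop1}\ref{faithfulness} applied in $\mathcal{B}$.
\item Otherwise, choose a monotone sequence $p_n\in\mathcal{A}\subseteq\mathcal{B}$ with $\tau(p_n)\to t$, say increasing. Total order in $\mathcal{B}$ gives $p_n\le b$, so $\vee p_n\le b$. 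Both projections have trace $t<\infty$, so faithfulness gives $b=\vee p_n\in\mathcal{A}_M$. The decreasing case is the same.
\end{itemize}
Hence $\mathcal{B}\subseteq\mathcal{A}_M$. For each $t\in\Gamma_{\mathcal{A}_M}$ the element of $\mathcal{A}_M$ with trace $t$ is unique, and $\mathcal{B}$ contains an element of every such trace, so $\mathcal{B}=\mathcal{A}_M$.
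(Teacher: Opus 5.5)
Your proposal is correct and follows essentially the same route as the paper: adjoin monotone limits of sequences in $\mathcal{A}$, get their traces from normality and Proposition~\ref{coreprop1}\ref{existencedecreasing}, prove the total order by comparing through elements of $\mathcal{A}$, and prove uniqueness by showing that faithfulness forces any element whose trace lies outside $\Gamma_{\mathcal{A}}$ to equal such a limit. One small slip: when the two limits have equal trace and one is increasing while the other is decreasing, the comparisons give only one inequality, but that already gives comparability, and faithfulness of $\tau$ on finite-trace projections then gives equality.
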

\begin{proof}
    \textbf{Existence:} If $\Gamma_{\mathcal{A}}$ is closed, then we can set $\mathcal{A}_M = \mathcal{A}$ and there is nothing to prove. Otherwise define
    \begin{align*}
S &= \{s \in \overline{\Gamma_{\mathcal{A}}} \setminus \Gamma_{\mathcal{A}}: \text{there exists } \{p_n\} \in \mathcal{A} \text{ such that } \tau(p_n) \uparrow s \},\\
I &= \{s \in \overline{\Gamma_{\mathcal{A}}} \setminus (\Gamma_{\mathcal{A}} \cup S): \text{there exists } \{q_n\} \in \mathcal{A} \text{ such that } \tau(q_n) \downarrow s \}.
    \end{align*}
It is clear that $\overline{\Gamma_A} = \Gamma_A \cup S \cup I$ and that the union is disjoint.
    
    Let $s \in S$, then there exists a sequence $p_{n} \in \mathcal{A}$ such that $\tau(p_n) \uparrow s$ and $\{p_n\}$ is an increasing net in $P(\mathcal{M})$, therefore there exists $p_s \in P(\mathcal{M})$ such that $p_n \uparrow p_s$. By normality of the trace $\tau(p_n) \uparrow \tau(p_s)$. 
    
    Let $s \in I$, then there exists a sequence $\{q_n\} \in \mathcal{A}$ such that $\tau(q_n) \downarrow s$ and  $\{q_n\}$ is an decreasing net in $P(\mathcal{M})$, therefore there exists $q_s \in P(\mathcal{M})$ such that $q_n \downarrow q_s$. By item \ref{existencedecreasing} in Proposition \ref{coreprop1} we get that $\tau(q_n) \downarrow \tau(q_s)$.

    Define
    $$
\mathcal{A}_M = \mathcal{A} \cup \{p_s: s \in S \} \cup \{ q_s: s \in I\}.
    $$
    It is immediate that $\Gamma_{\mathcal{A}_M} = \overline{\Gamma_A}$, it remains to show that $\mathcal{A}_M$ is an ordered core. Clearly $0 \in \mathcal{A} \subseteq \mathcal{A}_M$ and by construction every element in $\mathcal{A}_M$ has finite trace. It remains to show that $\mathcal{A}_M$ is totally ordered.
    
    Let $x$ and $y$ be distinct elements in $\mathcal{A}_M$. If $\tau(x) \not= \tau(y)$, without loss of generality we may assume that $\tau(x) < \tau(y)$. Since $\tau(y) - \tau(x) > 0$, there exist monotone (or constant) sequences $\{p_n\}$ and $\{q_m\}$ in $\mathcal{A}$ such that $\tau(p_n) \to \tau(x)$, $\tau(q_m) \to \tau(y)$ and $\tau(p_n) < \tau(q_m)$ for each $n,m \in \mathbb{N}$. By the total order of $\mathcal{A}$, it follows that $p_n \leq q_m$ for each $n,m \in \mathbb{N}$. Since $x$ is $\vee p_n$ or $\wedge p_n$, we get $x \leq q_m$ for each $m \in \mathbb{N}$. Since $y$ is $\vee q_m$ or $\wedge q_m$, we get $x \leq y$. 

    We will show that the case $\tau(x) = \tau(y)$ is impossible. Since the sets $\Gamma_{A}, S$ and $I$ are disjoint we may assume that both $x,y$ belong to either $\mathcal{A}$, $\{p_s: s \in S \}$ or $\{q_s : s \in S\}$. Since $x \not= y$, item \ref{faithfulness} in Proposition \ref{coreprop1} shows that at $x,y \in \mathcal{A}$ is impossible. Similarly, Proposition \ref{coreprop1}.\ref{uniquenessincreasing} and Proposition \ref{coreprop1}.\ref{uniquenessdecreasing} show that $x,y \in \{p_s: s \in S \}$ and $x,y \in \{p_s: s \in I \}$ are not possible. This completes the proof that $\mathcal{A}_M$ is an ordered core and completes the proof of existence.

\textbf{Uniqueness:} Let $\mathcal{A}_1$ and $\mathcal{A}_2$ be ordered cores such that $\Gamma_{\mathcal{A}_1} = \overline{\Gamma_{\mathcal{A}}} = \Gamma_{\mathcal{A}_2}$. Let $x \in \mathcal{A}_1 \setminus \mathcal{A}_2$. Since $\Gamma_{\mathcal{A}_1} = \Gamma_{\mathcal{A}_2}$, there must exist some $y \in \mathcal{A}_2$ such that $\tau(y) = \tau(x)$. Item \ref{faithfulness} in Proposition \ref{coreprop1} forces $y$ to not belong to $\mathcal{A}_1$, and the fact that $\mathcal{A} \subseteq \mathcal{A}_1 \cap \mathcal{A}_2$ forces $x$ and $y$ to not be in $\mathcal{A}$ and $\tau(x) \not\in \Gamma_{\mathcal{A}}$. 

If there exists $\{p_n\} \in \mathcal{A}$ such that $\tau(p_n) \uparrow \tau(x)$, then $\{p_n\}$ is an increasing sequence bounded above by $x$. It follows that $\vee p_n \leq x$, however $\tau(\vee p_n) = \sup_n \tau(p_n) = \tau(x)$. By item \ref{faithfulness} in Proposition \ref{coreprop1}, it follows that $x = \vee p_n$. Repeating the same argument with $y$ yields $y = \vee p_n$, hence $x = y$, which is impossible since $x \not\in \mathcal{A}_2$.

If there exists $\{q_n\} \in \mathcal{A}$ such that $\tau(q_n) \downarrow \tau(x)$ an analogous argument as the one used with increasing sequences yields the same contradiction $x = y$. Therefore $\mathcal{A}_2 \subseteq \mathcal{A}_1$. A symmetric argument shows the converse inclusion and proves the equality $\mathcal{A}_1 = \mathcal{A}_2$ completing the proof.

\end{proof}


An ordered core defines a natural subalgebra of $\mathcal{M}$, some of its properties are studied next. We also show that the subalgebra is invariant under the completion of the ordered core done in Theorem \ref{maximal}.

\begin{prop}\label{genabelian}
    Let $(\mathcal{M},\tau)$ be a von Neumann algebra with a semifinite normal faithful trace and $\mathcal{A} \subseteq P(\mathcal{M})$ a $\sigma$-bounded full ordered core. Let $\mathcal{M}_{\mathcal{A}}$ be the von Neumann algebra generated by $\mathcal{A}$. Then $\mathcal{M}_{\mathcal{A}}$ is an abelian subalgebra of $\mathcal{M}$. Moreover, the restriction of $\tau$ to $\mathcal{M}_{\mathcal{A}}$ is a semifinite normal faithful trace.

    Also, if $\mathcal{A}_M$ is the maximal extension from Theorem \ref{maximal}, $\mathcal{M}_{\mathcal{A}} = \mathcal{M}_{\mathcal{A}_M}$.
\end{prop}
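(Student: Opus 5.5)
Abelianness is the quick part. Any two elements of $\mathcal{A}$ commute, since by Definition \ref{coredefinition}\ref{totalorder} either $p\le q$ or $q\le p$, and in both cases $pq=qp$ (equal to the smaller one). So the $*$-algebra $\mathcal{B}$ generated by $\mathcal{A}$ and $1_{B(H)}$, which consists of polynomials in commuting projections, is commutative. Hence $\mathcal{B}\subseteq\mathcal{B}'$. Taking commutants gives $\mathcal{B}''\subseteq\mathcal{B}'=\mathcal{B}'''$, so $\mathcal{M}_{\mathcal{A}}=\mathcal{B}''\subseteq(\mathcal{B}'')'=\mathcal{M}_{\mathcal{A}}'$, i.e. $\mathcal{M}_{\mathcal{A}}$ is abelian. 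Because $\mathcal{A}\subseteq\mathcal{M}$ and $\mathcal{M}''=\mathcal{M}$, we also get $\mathcal{M}_{\mathcal{A}}\subseteq\mathcal{M}$.

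Restrict $\tau$ to $\mathcal{M}_{\mathcal{A}}^+$. Additivity, homogeneity, faithfulness and normality are inherited directly, since suprema of increasing nets in $\mathcal{M}_{\mathcal{A}}$ agree with those in $\mathcal{M}$ (a von Neumann subalgebra is strongly closed). Unitary invariance is trivial in an abelian algebra. The real content is semifiniteness, and this is where I expect the main obstacle: semifiniteness of $\tau$ on $\mathcal{M}$ supplies $z\le x$ with $z\in\mathcal{M}$, not $z\in\mathcal{M}_{\mathcal{A}}$.

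To get around this I use that the core is $\sigma$-bounded and full. There are $p_j\in\mathcal{A}$, $j\in J$ countable, with $\overline{\cup_j p_j(H)}=H$. By total order I can rearrange them into an increasing sequence $r_n=p_{j_1}\vee\cdots\vee p_{j_n}$, where each $r_n$ is the largest of finitely many comparable elements and so lies in $\mathcal{A}$. Then $r_n\uparrow 1$ and $\tau(r_n)<\infty$. Take $x\in\mathcal{M}_{\mathcal{A}}^+$ with $\tau(x)>0$. Since everything commutes, $r_nxr_n=x^{1/2}r_nx^{1/2}\uparrow x$ strongly, so by normality $\tau(r_nxr_n)\uparrow\tau(x)>0$. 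Hence some $n$ has $\tau(r_nxr_n)>0$. Set $z=r_nxr_n=x^{1/2}r_nx^{1/2}\in\mathcal{M}_{\mathcal{A}}^+$. Then $z\le x$ because $r_n\le 1$ and $x^{1/2}$ commutes with $r_n$, and $\tau(z)\le\|x\|_{B(H)}\tau(r_n)<\infty$ by the estimate $rxr\le\|x\|r$ from Proposition \ref{coreprop1}\ref{commutetrace}. This gives semifiniteness.

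For $\mathcal{M}_{\mathcal{A}}=\mathcal{M}_{\mathcal{A}_M}$, the inclusion $\subseteq$ is immediate from $\mathcal{A}\subseteq\mathcal{A}_M$. For the reverse it suffices to show $\mathcal{A}_M\subseteq\mathcal{M}_{\mathcal{A}}$. By the construction in Theorem \ref{maximal}, each new element is $p_s=\vee p_n$ or $q_s=\wedge q_n$ for a monotone sequence in $\mathcal{A}$. Monotone sequences of projections converge strongly to their supremum or infimum, and $\mathcal{M}_{\mathcal{A}}$ is strongly closed. By the uniqueness part of Theorem \ref{maximal}, every maximal extension coincides with this constructed one, so $p_s,q_s\in\mathcal{M}_{\mathcal{A}}$, which finishes the proof.
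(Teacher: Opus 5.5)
Your proof is correct and follows the paper's argument essentially step by step: abelianness from $\mathcal{B}\subseteq\mathcal{B}'\Rightarrow\mathcal{B}''\subseteq\mathcal{B}'''$; semifiniteness by compressing $x$ with an increasing sequence in $\mathcal{A}$ converging to $1$ and using $\tau(r_nxr_n)\le\|x\|_{B(H)}\tau(r_n)<\infty$; and $\mathcal{A}_M\subseteq\mathcal{M}_{\mathcal{A}}$ because the added elements are suprema or infima of monotone sequences from $\mathcal{A}$. You are slightly more careful than the paper (you build $r_n$ explicitly from $\sigma$-boundedness, check $z\le x$, and justify the last step via strong closedness of $\mathcal{M}_{\mathcal{A}}$), but the route is the same.
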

\begin{proof}
Consider the set $B = \mathcal{A} \cup \{1_{\mathcal{M}}\}$. Let $V$ be the algebraic $\mathbb{C}$-span of $B$. Since the elements of $B$ are commuting projections, it follows that $V$ is an abelian unital $*$-subalgebra of $\mathcal{M}$. Let $\mathcal{M}_{\mathcal{A}} = V''$. Since $V$ is abelian, then $V \subseteq V'$, two applications of the commutant show that $V'' \subseteq V'''$, so $\mathcal{M}_{\mathcal{A}} \subseteq \mathcal{M}_{\mathcal{A}}'$, hence $\mathcal{M}_\mathcal{A}$ is abelian.  

It is immediate that the restricted trace $\tau$ to $\mathcal{M}_{\mathcal{A}}$ is faithful and normal. To show that it is semifinite, let $x \in (\mathcal{M_{\mathcal{A}}})^+$ and $\{ p_n \}_{n}$ be a sequence in $\mathcal{A}$ such that $p_n \uparrow 1_{\mathcal{M}}$. Since $p_n x p_n \uparrow x$, there exists some $n \in \mathbb{N}$, such that $0 < p_nxp_n$. Since $\mathcal{M}_{\mathcal{A}}$ is abelian, it follows that $p_n x p_n = x p_n$, hence
$$
\tau(p_nxp_n) = \tau(x p_n) \leq \|x\|_{B(H)} \tau(p_n) < \infty. 
$$
This shows that the restricted trace is semifinite.

Finally, if $\mathcal{A}_M$ is the maximal extension of $\mathcal{A}$ given by Theorem \ref{maximal}. Notice that $\mathcal{A} \subseteq P(\mathcal{M}_{\mathcal{A}})$ and $\mathcal{A}_M$ is constructed taking infimum and supremum of sequences in $P(\mathcal{A})$. Since the lattice of projections is complete, it follows that $\mathcal{A}_{M} \subseteq P(\mathcal{M}_{\mathcal{A}})$. Hence $\mathcal{M}_{\mathcal{A}_{M}} \subseteq \mathcal{M}_{\mathcal{A}}$. The converse is immediate, therefore, we have equality. This completes the proof.

\end{proof}

With these tools, we are ready to introduce our notion of a decreasing operator relative to an ordered core. 

\begin{defn}
Given a maximal ordered core $\mathcal{A} \subseteq P(\mathcal{M})$ we say that an element $x \in S(\tau)^+$ is \textit{core decreasing} if there exists a sequence of operators $x_n \in \mathcal{M}^+$ such that $x_n \uparrow x$ and 
$$
x_n = \sum_{k=1}^{m_n} c_{n,k} p_{n,k},
$$
for some $c_{n,k} \geq 0$ and each $p_{n,k} \in \mathcal{A}_M$. We denote by $\mathcal{M}_{\mathcal{A}}^{\downarrow}$ the collection of core decreasing elements in $S(\mathcal{\tau})^+$.
\end{defn}

We finish this section with some examples of ordered cores and their maximal extensions. First, we see an example in abelian von Neumann algebras.

\begin{ex}\label{exCommutative}
    Let $(X,\Sigma,\theta)$ be a $\sigma$-finite measure space and consider $\mathcal{M}$ to be $L^{\infty}(X,\Sigma,\theta)$ acting on the complex Hilbert space $L^{2}(X,\Sigma,\theta)$ by multiplication, equipped with the trace $\tau(f) = \int_{X} f\, d\theta$. Let $S \subseteq \Sigma$ be totally ordered by inclusion $\theta$-a.e. satisfying $\theta(E) < \infty$, for all $E \in S$. The collection $\mathcal{A} = \{0\} \cup \{ p_E\}_{E \in S}$ where $p_E = \chi_{E}$ is a $\sigma$-bounded full ordered core.

    The subalgebra $\mathcal{M}_{\mathcal{A}}$ is the space $L^{\infty}(X,\sigma(S),\theta)$, here $\sigma(S)$ is the $\sigma$-algebra generated by $S$, this subalgebra can be properly contained in $\mathcal{M}$ depending on the space $X$ and the collection $S$. For instance, if $X = \mathbb{R}^n$, $\Sigma$ the Borel $\sigma$-algebra, and $\theta$ the Lebesgue measure, $S_1 = \{ B_r : r \in (0,\infty) \cap \mathbb{Q} \}$, and $S_2 = \{ B_r : r \in (0,\infty)\}$, here $B_r = \{ s \in \mathbb{R}^n: \|s\|_{2} \leq r \}$. Let $\mathcal{A}_1$ and $\mathcal{A}_2$ be the ordered cores induced by $S_1$ and $S_2$.

    It is easy to see that $\Gamma_{\mathcal{A}_1} = [0,\infty) \cap \mathbb{Q}$, and $\Gamma_{\mathcal{A}_2} = [0,\infty)$, it follows that the maximal core extending $\mathcal{A}_1$ is $\mathcal{A}_2$, and that $\mathcal{M}_{\mathcal{A}_1}$ consists of Borel measurable radial functions, which is a strict subalgebra of $\mathcal{M}$ unless $n = 1$. The collection $\mathcal{M}^{\downarrow}_{\mathcal{A}}$ corresponds to radially decreasing functions.

    See \cite[Section~5]{coredecreasing} for more examples using different measure spaces.
\end{ex}

The following example shows an ordered core based on a countable Hilbert basis.
\begin{ex}\label{exHilbert}
    Let $H$ be a separable Hilbert space, with a fixed orthonormal basis $\{ e_n \}_{n}$, $\mathcal{M} = B(H)$, and $\tau$ the standard trace. Define the projections 
    $$
p_n(\xi) = \sum_{k=1}^n \langle \xi , e_k \rangle e_k, \quad \forall n \geq 1. 
    $$
    Set $\mathcal{A} = \{0\} \cup \{p_n\}_{n \geq 1}$. It is clear that $\mathcal{A}$ is a $\sigma$-bounded full ordered core, and that $\tau(p_n) = n$, thus $\Gamma_{\mathcal{A}} = \mathbb{N}$. Therefore, $\mathcal{A}$ is maximal. The subalgebra $\mathcal{M}_{\mathcal{A}}$ is the abelian von Neumann algebra generated by the projections $\xi \mapsto \langle \xi, e_n \rangle e_n$.

    The operators in $\mathcal{M}^{\downarrow}_{\mathcal{A}}$ correspond to increasing limits of operators of the form
    $$
\xi \mapsto \sum_{k=1}^n c_k \langle \xi, e_k \rangle e_k,
    $$
    where $\{c_k\}$ is a decreasing sequence of positive real numbers. This collection includes operators of the form 
    $$
\xi \mapsto \sum_{k=1}^\infty c_k \langle \xi, e_k \rangle e_k,
    $$
    for decreasing sequences $\{c_k\} \in \ell^{\infty}(\mathbb{N})$.
\end{ex}

We finish this section with another example of noncommutative von Neumann algebras.

\begin{ex}\label{exHyperfinite}
    Let $\mathcal{M}$ be the hyperfinite $II_1$ factor, constructed via the sequence of inclusions of matrix algebras $M_{n}(\mathbb{C}) \xhookrightarrow{} M_{2n}(\mathbb{C})$ defined by $x \mapsto \begin{pmatrix}
x & 0 \\
0 & x 
\end{pmatrix}$ extending the trace satisfying $\tau(x) = \frac{1}{2^n} \tau_n$ for $x \in M_{2^n}(\mathbb{C})$. Here, $\tau_n$ is the standard trace in $M_{2^n}(\mathbb{C})$. Let $p_1 = \begin{pmatrix}
1 & 0 \\
0 & 0 
\end{pmatrix}$ and for $n > 2$, define the sequence of projections $p_n \in M_{2^n}(\mathbb{C})$ by the block diagonal matrix
$$
p_n = \begin{pmatrix}
I_2 & 0 & \dots & 0 & 0 \\
0 & I_2 & \dots & 0 & 0 \\
\vdots & \vdots & \ddots & \vdots & \vdots \\
0 & 0 & \dots & I_2 & 0 \\
0 & 0 & \dots & 0 & p_1 \\
\end{pmatrix}, \quad n \geq 2.
$$
Since $p_n \to 1_{\mathcal{M}}$ in strong operator topology, we conclude that $\mathcal{A}$ is a $\sigma$-bounded full ordered core. A direct computation shows that $\tau(p_n) = 1 - 2^{-n}$, therefore $\Gamma_{\mathcal{A}} = \{0\} \cup \{ \sum_{k=1}^n 2^{-k} \}_{n \geq 1}$, so $\mathcal{A}_M = \mathcal{A} \cup \{1_R\}$.
\end{ex}

\section{Morphisms of ordered cores}

We define the notion of a mapping between ordered cores. Theorem \ref{functor} will be one of our main tools for the remaining investigation, which shows that any mapping between ordered cores, induces a linear map between the involved von Neumann algebras.

\begin{defn}
    Let $(\mathcal{M},\tau_1)$ and $(\mathcal{N},\tau_2)$ be von Neumann algebras with semifinite normal faithful traces, let $\mathcal{A} \subseteq P(\mathcal{M})$ be an ordered core. We say that a mapping $r:\mathcal{A} \to P(\mathcal{N})$ is a \textit{core morphism} provided:
    \begin{enumerate}[label=(\roman*)]
        \item We have $r(0) = 0$.
        \item If $p, q \in \mathcal{A}$ and $p \leq q$, then $r(p) \leq r(q)$.
        \item There exists a finite constant $c > 0$ such that if $p, q \in \mathcal{A}$ and $p \leq q$, then $\tau_2\big( r(q) - r(p) \big) \leq c \, \tau_{1}(q-p)$.
        \end{enumerate}
\end{defn}

Our goal is to show that any morphism between ordered cores gives rise to a linear map between their von Neumann algebras. This will be done by inducing a measure on an isomorphic abelian subalgebra of $\mathcal{M}$. We first need two lemmas for that task. Lemma \ref{coremedida} extends \cite[Lemma~4.3]{sinnamon22} to our definition of ordered cores, showing that any ordered core in an abelian von Neumann algebra induces a natural semiring of measurable sets. Lemma \ref{inducedpremeasure} uses a core morphism to pull back integrals to induce a premeasure on that semiring; this is an extension of \cite[Lemma~4.4]{sinnamon22}.

\begin{lem}\label{coremedida}
    Let $\mathcal{A} = \{p_i\}_{i \in I}$ be an ordered core in the abelian von Neumann algebra $L^{\infty}(X,\Sigma,\theta)$ for a $\sigma$-finite measure $\theta$. Let $\Sigma_0 = \{ E \in \Sigma: \theta(E) = 0 \}$, 
\begin{align*}
S = \{ (B \setminus A) \cup C:& \text{ for some } p,q \in \mathcal{A} \text{ such that } p \leq q, p \not= q   \\
&\chi_{B} = q, \chi_{A} = p, \theta \text{ a.e.,} \theta(C) = 0 \text{ and } (B \setminus A) \cap C = \emptyset  \}.
\end{align*}
Set $S^+ = S \cup \Sigma_0$, then:

\begin{enumerate}[label=(\roman*)]
\item \label{measurecoredecomp} If $E \in S^+$, then $\theta(E) > 0$ if and only if $E \in S$.
\item \label{measurecoretotalorder} If $\{ (B_j \setminus A_j) \cup C_j \}_{j \in \mathbb{N}}$ is a sequence in $S$, then there exist sequences $\{A'_{j}\}$, $\{B'_j\}$, $\{C'_j\}$ in $\Sigma$ such that $\chi_{A_j} = \chi_{A'_j}$, $\chi_{B_j} = \chi_{B'_j}$ $\theta$-a.e., $\theta(C'_j) = 0$, $(B_j \setminus A_j) \cup C_j = (B'_j \setminus A'_j) \cup C'_j$, and $(B'_j \setminus A'_j) \cap C'_j = \emptyset$ for each $j \in \mathbb{N}$. Moreover, the collection $\{ A'_j,B'_j\}_{j \in \mathbb{N}}$ is totally ordered by inclusion. And $\theta(E'_j \setminus E'_{k}) = 0$ if and only if $E'_j \subseteq E'_k$ for all $E'_j, E'_k \in \{A'_j\} \cup \{B'_j\}$.
\item \label{measurecorecomplete} Let $E, D \in \Sigma$ such that $\theta(D) = 0$. Then, $E \in S^+$ if and only if $(E \setminus D) \in S^+$.
\item \label{measurewelldefined} If $\{(B_j \setminus A_j) \cup C_j, (B'_j \setminus A'_j) \cup C'_j\} \subseteq S$ and $(B_j \setminus A_j) \cup C_j = (B'_j \setminus A'_j) \cup C'_j$, then $\chi_{B'_j} = \chi_{B_j}$ and $\chi_{A'_j} = \chi_{A_j}$.  
\item \label{measurecoreint} If $E_1$ and $E_2$ belong to $S^+$, then $E_1 \cap E_2 \in S^+$. 
\item \label{measurecoredif} If $E_1$ and $E_2$ belong to $S^+$, then $E_1 \setminus E_2 = \cup_{k=1}^n F_k$, for a disjoint collection $\{F_k\} \in S^+$. 
\end{enumerate}
In particular, $S^+$ is a semiring of sets.
\end{lem}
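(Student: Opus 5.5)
The plan is to work with measurable representatives of the projections in $\mathcal{A}$. In $L^\infty(X,\Sigma,\theta)$ every $p\in\mathcal{A}$ is $\chi_{A}$ for some $A\in\Sigma$ that is determined up to a null set. Since $\chi_A\le\chi_B$ means exactly $\theta(A\setminus B)=0$, the total order of $\mathcal{A}$ becomes inclusion modulo null sets. Proposition \ref{coreprop1}\ref{faithfulness} adds two facts. First, distinct elements of $\mathcal{A}$ have distinct traces. Second, $p\le q$ with $p\ne q$ forces $\tau(q-p)=\theta(B\setminus A)>0$.

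Item \ref{measurecoredecomp} follows at once. If $E=(B\setminus A)\cup C\in S$, then $\theta(E)=\theta(B\setminus A)=\tau(q)-\tau(p)>0$, because $p\neq q$. Every element of $\Sigma_0$ is null.

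For \ref{measurecoretotalorder}, I will gather the countably many sets $A_j,B_j$ and correct them to a family that is genuinely nested. Take the countable subfamily of $\mathcal{A}$ they represent. For each pair with $\chi_{E}\le\chi_{F}$, the set $E\setminus F$ is null. Let $N$ be the union of all these countably many null sets, and put $A'_j=A_j\setminus N$ and $B'_j=B_j\setminus N$. Then $E'\setminus F'=\emptyset$ whenever $\chi_E\le\chi_F$, so the family is totally ordered by inclusion. Define $C'_j=C_j\cup\big(((B_j\setminus A_j)\cup C_j)\cap N\big)$ minus $(B'_j\setminus A'_j)$. This set is null, and one checks directly that $(B'_j\setminus A'_j)\cup C'_j$ equals the original set and that the union is disjoint.

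The final claim of \ref{measurecoretotalorder} uses faithfulness. If $\theta(E'_j\setminus E'_k)=0$, then $\chi_{E'_j}\le\chi_{E'_k}$. If in addition $E'_k\subsetneq E'_j$, the two projections would be comparable with equal trace, so Proposition \ref{coreprop1}\ref{faithfulness} makes them equal. Equal projections give the same a.e. class, and for such representatives we did not remove anything asymmetrically, so the inclusion holds in the right direction. To make this airtight I will also remove from $N$ the symmetric differences of representatives of equal projections.

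Item \ref{measurecorecomplete} works by absorbing $D$ into $C$. Write $E\setminus D=(B\setminus(A\cup D))\cup(C\setminus D)$. Here $B$ is unchanged and $A\cup D$ still represents $p$, and in the other direction $E=(E\setminus D)\cup (D\setminus(E\setminus D))$. For null sets this is trivial.

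For \ref{measurewelldefined}, the two descriptions agree a.e.: $\chi_{B\setminus A}=\chi_{B'\setminus A'}$ a.e., so $q-p=q'-p'$. All four projections lie in a commuting chain. I will compare traces and use the order. Suppose $q<q'$. Then either $p'\le q$, which makes $q'-p'$ dominate a piece of $q'-q$ that is not covered by $q-p$, or $q\le p'$, which makes $q-p$ and $q'-p'$ orthogonal; both contradict $q-p=q'-p'\neq0$. Hence $q=q'$, and then $p=p'$.

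For \ref{measurecoreint} and \ref{measurecoredif}, I first use \ref{measurecoretotalorder} to choose nested representatives for both sets, and I discard null parts using \ref{measurecorecomplete}. The case where either set is null is trivial. Otherwise the sets become intervals $B_1\setminus A_1$ and $B_2\setminus A_2$ in a chain. Their intersection is $\min(B_1,B_2)\setminus\max(A_1,A_2)$, which is empty or an element of $S$. The difference $(B_1\setminus A_1)\setminus(B_2\setminus A_2)$ is at most two disjoint "intervals", namely $(B_1\cap A_2)\setminus A_1$ and $B_1\setminus (B_2\cup A_1)$, each of the form $\max\setminus\min$ over chain elements. Each piece is therefore empty (hence in $\Sigma_0$) or lies in $S$ after reattaching null sets.

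The main obstacle is the bookkeeping in \ref{measurecoretotalorder} and \ref{measurewelldefined}, namely producing representatives that are strictly nested while preserving the exact set identities including the null parts $C$. The semiring conclusion then follows, since $\emptyset\in\Sigma_0$ and closure under intersection and finite disjoint differences is established.
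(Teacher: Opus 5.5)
Your proposal is correct and follows the paper's proof item by item: (i) by faithfulness, (ii) by a countable null set that absorbs every a.e.\ inclusion among the representatives (you delete it, the paper adjoins it as $G$, with the same effect $E'\setminus F'=(E\setminus F)\setminus N$), (iii) by absorbing $D$ into the null part, and (v)--(vi) by the chain decomposition into at most two ``intervals''. The only real variation is (iv), where your identity $q-p=q'-p'$ with a case split on the chain is a cleaner substitute for the paper's trace comparison; you should fix the slip in (iii), where you need $E=(E\setminus D)\cup(E\cap D)$ (your $D\setminus(E\setminus D)$ equals $D$, which gives $E\cup D$), and you can drop the faithfulness detour at the end of (ii), since $\theta(E'\setminus F')=0$ already forces $\chi_E\le\chi_F$, hence $E\setminus F\subseteq N$ and $E'\subseteq F'$.
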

\begin{proof}
We show \ref{measurecoredecomp} first. Let $E \in S^+$, by construction of $S^+$, it is clear that $\theta(E) > 0$ implies that $E \in S$. Conversely, let $E = \big((B \setminus A) \cup C\big) \in S$ with $p = \chi_{A}$, $q = \chi_{B}$, and suppose that $\theta(E) = 0$ looking for a contradiction. Since $\theta(A) < \infty$ we have 
$$
0 = \theta\big( (B \setminus A) \cup C\big) \geq \theta\big(B \setminus A \big) = \theta(B) - \theta(A).
$$
Since $p \leq q$, we have that $\theta(B) \geq \theta(A)$, so we have $\theta(B) = \theta(A)$. The contradiction $p = q$ follows.

To prove \ref{measurecoretotalorder}, let $\{E_{j}\}_{j \in \mathbb{N}}$ be a collection in $\Sigma$ containing the sequences $\{A_j\},\{B_j\}$ without repetition. For each $(i,j) \in \mathbb{N}^2$ define
$$
G_{jk} = \begin{cases}
			 E_{j} \setminus E_{k}, & \text{if } \theta(E_j \setminus E_k) = 0, \\
            \emptyset, & \text{otherwise.}
		 \end{cases}
$$
Let $G = \cup_{i,j \in \mathbb{N}} G_{ij}$ and define $E'_j = E_{j} \cup G$, $A'_j = E'_{k_j}$ where $k_j$ is the unique index such that $E_{k_j} = A_j$. Similarly, $B'_j = E'_{k_j}$ for the unique $k_j$ such that $B_j = E_{k_j}$ and set
$$
C'_{j} = \Big(\big(G \cap (B_{j} \setminus A_{j})\big) \cup C_j \Big) \setminus (B'_j \setminus A'_{j}).
$$
Notice that $G$ is a countable union of sets of measure zero so $\theta(G) = 0$, $\chi_{E_j} = \chi_{E'_j}$, and $\theta(C'_j) = 0$. If $\chi_{E_j} \leq \chi_{E_k}$ $\theta$-a.e., then $\theta(E_{j} \setminus E_{k}) = 0$, therefore $(E_{j} \setminus E_{k}) = G_{jk} \subseteq G$. Thus,
$$
E'_{j} \setminus E'_k = (E_j \cup G) \setminus (E_k \cup G) = (E_{j} \setminus E_k) \setminus G = \emptyset.
$$
We conclude that $E'_j \subseteq E'_k$. Since for each $i,j$ we have that $\chi_{E_j} \leq \chi_{E_k}$ or $\chi_{E_k} \leq \chi_{E_j}$ $\theta$-.a.e, it follows that $\{E'_j\}_{j \in \mathbb{N}}$ is totally ordered by inclusion.

By \ref{measurecoredecomp}, for each $j \in \mathbb{N}$, we may assume that $\theta(B_j \setminus A_j) > 0$, it follows that $\theta(A_j \setminus B_j) = 0$, therefore
\begin{align*}
(B'_j \setminus A'_j) \cup C'_j &=  \big( (B_j \setminus A_j) \setminus G \big) \cup C'_j \\
&= \big( (B_j \setminus A_j) \setminus G \big) \bigcup \Big(\big(G \cap (B_{j} \setminus A_{j})\big) \cup C_j \Big) \\
&= (B_j \setminus A_j) \cup C_j.  
\end{align*}

For the final property. Since $\{E'_j\}$ is totally ordered by inclusion, it follows that $E'_j \subset E'_k$ implies $\theta(E'_j \setminus E'_k) = 0$. Conversely, suppose that $\theta(E'_j \setminus E'_k) = 0$ for some indices $j,k$. Then,
$$
0 = \theta(E'_j \setminus E'_k) = \theta\big( (E_j \setminus E_k) \setminus G \big).
$$
Since $\theta(G) = 0$, it follows that $\theta(E_j \setminus E_k) = 0$, hence $E_j \setminus E_k \subseteq G_{ij} \subseteq G$. Hence, $E'_j \setminus E'_k = \emptyset$ and completes the proof of \ref{measurecoretotalorder}.

To prove \ref{measurecorecomplete}, let $E, D \in \Sigma$ such that $\theta(D) = 0$. If $E \in \Sigma_0$, the statement is obvious. Therefore we may assume that $E \in S$, so $E = (B \setminus A) \cup C$, hence
\begin{align*}
    E \setminus D &= \big( (B \setminus A) \cup C \big) \setminus D = \big( (B \setminus A) \cup C \big) \cap D^c \\
    &= \big( (B \setminus A) \cap D^c \big) \cup (C \cap D^c) = \big( B \cap A^c \cap D^c \big) \cup (C \cap D^c) \\
    &= \big( B \cap (A \cup D)^c \big) \cup (C \cap D^c) = \big( B \setminus (A \cup D) \big) \cup (C \setminus D).
\end{align*}
Let $A' = A \cup D$, note that $\chi_{A'} = \chi_{A}$ since $\theta(D) = 0$ and also that $\theta(C \setminus D) = 0$, therefore $E \setminus D = (B \setminus A') \cup (C \setminus D)$ which belongs to $S$ since the union is disjoint.  

Conversely, if $E \setminus D \in S^+$, then $E \setminus D = (B \setminus A) \cup C$. Hence
\begin{align*}    
E &= (E \setminus D) \cup (E \cap D) = (B \setminus A) \cup C \cup (E \cap D)\\ 
&= (B \setminus A) \cup \Big( C \cup (E \cap D) \Big)
= (B \setminus A) \cup C'.
\end{align*}
Here $C'$ is the set of zero measure $\Big( C \cup (E \cap D) \Big) \setminus (B \setminus A)$. Thus, $E \in S^+$ and proves \ref{measurecorecomplete}.

Now, we prove \ref{measurewelldefined}. First, notice that by \ref{measurecoretotalorder} we may assume that $\{A'_j,B'_j,A_j,B_j\}$ is totally ordered by inclusion. Suppose that $\chi_{B_j} \leq \chi_{B'_j}$ $\theta$-a.e., notice that $\theta(B'_j \setminus A'_j) = \theta(B_j \setminus A_j)$. If $\theta(B_j) < \theta(B'_j)$ then
$$
\theta(B'_j) - \theta(A'_j) = \theta(B'_j \setminus A'_j) = \theta(B_j \setminus A_j) = \theta(B_j) - \theta(A_j) < \theta(B'_j) - \theta(A_j). 
$$
It follows that $\theta(A'_j) > \theta(A_j)$. So $\theta\big((A'_j \setminus A_j) \cap B_j \big) = \min\{\theta(B_j),\theta(A'_j)\} - \theta(A_j) > 0$, however
$$
(A'_j \setminus A_j) \cap B_j \subseteq (B_j \setminus A_j) \cup C_j = (B'_j \setminus A'_j) \cup C'_j.
$$
It follows that $(A'_j \setminus A_j) \cap B_j$ is contained in the set of measure zero $C'_j$, which is impossible. Therefore $\theta(B_j) = \theta(B'_j)$ and using the fact that $\theta(B'_j \setminus A'_j) = \theta(B_j \setminus A_j)$ we conclude that $\theta(A_j) = \theta(A'_j)$. Hence $\chi_{B_j} = \chi_{B'_j}$ and $\chi_{A_j} = \chi_{A'_j}$, as desired.

To prove \ref{measurecoreint} let $E_1, E_2 \in S^+$. If either has measure zero, then $E_1 \cap E_2 \in \Sigma_0 \subseteq S^+$. So we may assume that $E_1, E_2 \in S$. Let $E_1 = (B_1 \setminus A_1) \cup C_1$ and $E_2 = (B_2 \setminus A_2) \cup C_2$. By \ref{measurecoretotalorder} we may assume that $\{A_1,A_2,B_1,B_2\}$ are totally ordered by inclusion. Note that we must have that $A_1 \subset B_1$ and $A_2 \subset B_2$. After a relabelling, we may assume that $B_1 \subseteq B_2$, set $E = E_1 \cap E_2$, then
    \begin{align*}
        E &= \big((B_1 \setminus A_1) \cup C_1\big) \cap \big((B_2 \setminus A_2) \cup C_2\big) \\
        &= \big((B_1 \setminus A_1) \cap (B_2 \setminus A_2) \big) \\ &\cup \bigg( \big( (B_1 \setminus A_1) \cap C_{2} \big) \cup \Big(C_1 \cap \big( (B_2 \setminus A_2) \cup C_2 \big) \Big)  \bigg)\\
        &= \big((B_1 \setminus A_1) \cap (B_2 \setminus A_2) \big) \cup D,
    \end{align*}
    here $D$ is the set of measure zero $\big( (B_1 \setminus A_1) \cap C_{2} \big) \cup \Big(C_1 \cap \big( (B_2 \setminus A_2) \cup C_2 \big) \Big)$. 

    If $B_1 \subseteq A_{2}$, then $E = D$, and belongs to $S^+$ since it would have measure zero. In the case $A_2 \subseteq B_1$ we have
    $$
E = \big(B_{1} \setminus (A_{1} \cup A_{2}) \big) \cup D,
    $$
    since $A_{1} \cup A_{2} = A_1$ or $A_{1} \cup A_{2} = A_2$ we conclude that $E \in S^+$, this completes the proof of \ref{measurecoreint}.

    Finally, to prove \ref{measurecoredif}. Let $F = E_1 \setminus E_2$, if $E_1$ has measure zero, then $\theta(F) = 0$, so $F \in S^+$. If $E_2$ has measure zero, then $F \in S^+$, as a consequence of \ref{measurecorecomplete}. So we may assume that $E_1 = (B'_1 \setminus A'_1) \cup C'_1$ and $E_2 = (B'_2 \setminus A'_2) \cup C'_2$. Once more, we may assume that $A'_1 \subset B'_1$, $A'_2 \subset B'_2$ and $B'_1 \subseteq B'_2$.
    
    Notice that 
    \begin{align*}
(B'_1 \setminus A'_1) \setminus (B'_2 \setminus A'_2) &= \big(B'_1 \cap (A'_1)^c \big) \cap \big(B'_2 \cap (A'_2)^c \big)^c \\
&= \big(B'_1 \cap (A'_1)^c \big) \cap \big((B'_2)^c \cup (A'_2) \big) \\
&= \big(B'_1 \cap (A'_1)^c \cap (B'_2)^c \big) \cup \big( B'_1 \cap (A'_1)^c \cap A'_2 \big)\\
&= \big(B'_1 \cap \big( A'_1 \cup B'_2\big)^c \big) \cup \big( (B'_1 \cap A'_2) \cap (A'_1)^c \big)\\
&= \big(B'_1 \setminus \big( A'_1 \cup B'_2\big) \big) \cup \big( (B'_1 \cap A'_2) \setminus A'_1 \big).
    \end{align*}
Since $A'_1 \cup B'_2 = A'_1$ or $A'_1 \cup B'_2 = B'_2$ and $B'_1 \cup A'_2 = B'_1$ or $B'_1 \cup A'_2 = A'_2$, we conclude that $B'_1 \setminus \big( A'_1 \cup B'_2\big) \in S^+$ and $(B'_1 \cap A'_2) \setminus A'_1 \in S^+$. Moreover, we assumed that $A'_2 \subset B'_2$, therefore  $\big(B'_1 \setminus \big( A'_1 \cup B'_2\big) \big) \cup \big( (B'_1 \cap A'_2) \setminus A'_1 \big)$ is a disjoint union in $S^+$.

Then
\begin{align*}
    F &= \big((B'_1 \setminus A'_1) \cup C'_1\big) \setminus \big((B'_2 \setminus A'_2) \cup C'_2\big) \\
    &= \Big((B'_1 \setminus A'_1) \setminus \big((B'_2 \setminus A'_2) \cup C'_2\big) \Big) \cup \Big(C'_1 \setminus \big((B'_2 \setminus A'_2) \cup C'_2\big) \Big)  \\
    &= \Big( \big((B'_1 \setminus A'_1) \setminus (B'_2 \setminus A'_2) \big) \setminus C'_2 \Big) \cup \Big(C'_1 \setminus \big((B'_2 \setminus A'_2) \cup C'_2\big) \Big)\\
    &= \bigg( \Big( \big(B'_1 \setminus \big( A'_1 \cup B'_2\big) \big) \cup \big( (B'_1 \cap A'_2) \setminus A'_1 \big) \Big) \setminus C'_2 \bigg) \\
    &\cup \Big(C'_1 \setminus \big((B'_2 \setminus A'_2) \cup C'_2\big) \Big) \\
    &= \Big( \big(B'_1 \setminus  (A'_1 \cup B'_2) \big) \setminus C'_2 \Big) \cup \Big( \big( (B'_1 \cap A'_2) \setminus A'_1 \big) \setminus C'_2 \Big) \cup H, 
\end{align*}
here $$H = \Big(C'_1 \setminus \big((B'_2 \setminus A'_2) \cup C'_2\big) \Big) \setminus \bigg( \Big( \big(B'_1 \setminus  (A'_1 \cup B'_2) \big) \setminus C'_2 \Big) \cup \Big( \big( (B'_1 \cap A'_2) \setminus A'_1 \big) \setminus C'_2 \Big) \bigg),$$ which is a set of zero measure so it belongs to $S^+$.
Then, $F$ is the disjoint union of the sets $H$, $\big(B'_1 \setminus  (A'_1 \cup B'_2) \big) \setminus C'_2$ and $\big( (B'_1 \cap A'_2) \setminus A'_1 \big) \setminus C'_2$, with the last two sets belonging to $S^+$ by \ref{measurecorecomplete}. This shows that $S^+$ is a semiring of sets and completes the proof.

\end{proof}
As an immediate application of the previous lemma, we get a characterization of equality of positive elements in $\mathcal{M}_{\mathcal{A}}$.
\begin{lem}\label{lemaigualador}
    Let $\mathcal{M}$ be a von Neumann algebra equipped with a semifinite normal faithful trace $\tau$ and a full $\sigma$-bounded ordered core $\mathcal{A}$. Let $\mathcal{M}_{\mathcal{A}}$ be the von Neumann algebra generated by $\mathcal{A}$ and $x,y \in \mathcal{M}^+_{\mathcal{A}}$. Then $x = y$ if and only if
    $$
\tau(xp) = \tau(yp), \quad \forall p \in \mathcal{A}.
    $$
\end{lem}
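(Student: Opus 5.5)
The forward implication is immediate, so the plan concerns the converse. I would transport the problem to a measure space. By Proposition \ref{genabelian}, $\mathcal{M}_{\mathcal{A}}$ is abelian and the restriction of $\tau$ to it is semifinite, normal and faithful. Theorem \ref{abeliano} then yields a localizable measure space $(X,\Sigma,\theta)$ and a $*$-isomorphism $\Phi:\mathcal{M}_{\mathcal{A}}\to L^\infty(X,\Sigma,\theta)$ with $\tau(z)=\int_X\Phi(z)\,d\theta$.

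Next I would check that $\theta$ is $\sigma$-finite. By $\sigma$-boundedness and fullness there is a sequence $p_n\in\mathcal{A}$ with $p_n\uparrow 1$ and $\tau(p_n)<\infty$. Writing $\Phi(p_n)=\chi_{B_n}$, we get $\theta(B_n)<\infty$ and $\chi_{B_n}\uparrow 1$ a.e. Thus $\Phi(\mathcal{A})$ is an ordered core in $L^\infty(X,\Sigma,\theta)$, and Lemma \ref{coremedida} applies.

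Set $f=\Phi(x)$ and $g=\Phi(y)$, both nonnegative and bounded. Define
$$
\lambda_f(E)=\int_E f\,d\theta, \qquad \lambda_g(E)=\int_E g\,d\theta
$$
on $\Sigma$. For $E=(B\setminus A)\cup C\in S$ with $\chi_B=\Phi(q)$ and $\chi_A=\Phi(p)$, the hypothesis gives
$$
\lambda_f(E)=\tau(xq)-\tau(xp)=\tau(yq)-\tau(yp)=\lambda_g(E).
$$
Every term here is finite, since $\tau(xq)\le\|x\|\tau(q)$ by Proposition \ref{coreprop1}\ref{commutetrace}. On $\Sigma_0$ both measures vanish, so $\lambda_f=\lambda_g$ on the semiring $S^+$. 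The sets $B_n\setminus\emptyset$ lie in $S$ and cover $X$ up to a null set, and both measures are finite on each of them. The uniqueness part of the Carathéodory extension theorem (or a $\pi$-$\lambda$ argument, since $S^+$ is closed under intersections by Lemma \ref{coremedida}\ref{measurecoreint}) therefore gives $\lambda_f=\lambda_g$ on $\sigma(S^+)$.

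The main obstacle is passing from equality on $\sigma(S^+)$ to $f=g$ almost everywhere, because $\sigma(S^+)$ may be smaller than $\Sigma$. Here I would use that $\mathcal{M}_{\mathcal{A}}$ is generated by $\mathcal{A}$. The algebra $L^\infty(X,\sigma(S^+),\theta)$, viewed inside $L^\infty(X,\Sigma,\theta)$, is a von Neumann subalgebra that contains $\Phi(\mathcal{A})$ and $1$. It is weak$^*$ closed because $\sigma(S^+)$ contains $\Sigma_0$ and is a $\sigma$-algebra. Pulling it back through $\Phi$, which is normal as a $*$-isomorphism of von Neumann algebras, gives a von Neumann subalgebra of $\mathcal{M}_{\mathcal{A}}$ containing $\mathcal{A}$, so it equals $\mathcal{M}_{\mathcal{A}}$. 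Hence $f$ and $g$ agree a.e. with $\sigma(S^+)$-measurable functions. Taking $E=\{f>g\}$ and $E=\{g>f\}$ in $\sigma(S^+)$, intersected with each $B_n$ to keep everything finite, the equality $\lambda_f=\lambda_g$ forces $\theta(E)=0$. So $f=g$ a.e., and injectivity of $\Phi$ gives $x=y$.
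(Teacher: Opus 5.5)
Your proposal is correct and follows essentially the same route as the paper. Both arguments transport to $L^\infty(X,\Sigma,\theta)$ via $\Phi$, show that the two indefinite integrals agree on the semiring $S^+$ of Lemma \ref{coremedida}, extend this agreement to the generated $\sigma$-algebra, and use that $\mathcal{A}$ generates $\mathcal{M}_{\mathcal{A}}$ to conclude that $L^\infty(X,\sigma(S^+),\theta)$ is all of $L^\infty(X,\Sigma,\theta)$. You are more careful than the paper in verifying the $\sigma$-finiteness of $\theta$, and in claiming only that every element of $L^\infty(X,\Sigma,\theta)$ has a $\sigma(S^+)$-measurable representative rather than that $\sigma(S^+)=\Sigma$.
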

\begin{proof}
Suppose that $x,y \in \mathcal{M}_{\mathcal{A}}^+$ such that $\tau(xp) = \tau(yp)$. By Lemma \ref{genabelian} and Theorem \ref{abeliano}, there exists a trace preserving $*$-isomorphism $\Phi: \mathcal{M}_{\mathcal{A}} \to L^{\infty}(X,\Sigma,\theta)$. The collection $\Phi(\mathcal{A})$ is an ordered core in $L^{\infty}(X,\Sigma,\theta)$. 
Let $\sigma(\Phi(\mathcal{A}))$ be the $\sigma$-algebra generated by $\Phi(\mathcal{A})$. Since $L^{\infty}(X,\sigma(\Phi(\mathcal{A})),\theta)$ is a von Neumann subalgebra of $(X,\Sigma,\theta)$, then $\Phi^{-1}\Big( L^{\infty}(X,\sigma(\Phi(\mathcal{A})),\theta)\Big)$ is a von Neumann subalgebra of $\mathcal{M}_{\mathcal{A}}$ containing $\mathcal{A}$. Therefore, $L^{\infty}(X,\sigma(\Phi(\mathcal{A})),\theta)$ coincides with $L^{\infty}(X,\Sigma,\theta)$. 

The measures $E \mapsto \int_{E} \Phi(x) \, d\theta$ and $E \mapsto \int_{E} \Phi(y) \, d\theta$ coincide on the semiring $S^+$ defined in Lemma \ref{coremedida}, hence, they coincide on the generated $\sigma$-algebra $\sigma(\Phi(\mathcal{A})) = \Sigma$. By the uniqueness of the Radon-Nikodym derivative, we get that $\Phi(x) = \Phi(y)$. Hence, $x = y$. This completes the proof of the nontrivial implication of the lemma.

\end{proof}

The next lemma shows that given a core morphism, we can induce a measure on a related measurable space.

\begin{lem}\label{inducedpremeasure}
        Let $(\mathcal{M},\tau_1)$ and $(\mathcal{N},\tau_2)$ be von Neumann algebras with semifinite normal faithful traces, let $\mathcal{A} \subseteq P(\mathcal{M})$ be an ordered core and a core morphism $r:\mathcal{A} \to P(\mathcal{N})$ with constant $c$. Let $\Phi: \mathcal{M}_{\mathcal{A}} \to L^{\infty}(X,\Sigma,\theta)$ be the $*$-isomorphism given by Theorem \ref{abeliano}, the sets $\Sigma_0 = \{ E \in \Sigma: \theta(E) = 0\}$,
\begin{align*}
S = \{ (B \setminus A) &\cup C: \text{ for some } p,q \in \mathcal{A} \text{ such that } p \leq q, p \not= q   \\
&\chi_{B} = \Phi(q), \chi_{A} = \Phi(p) \, \theta \text{- a.e.,} \theta(C) = 0 \text{ and } (B \setminus A) \cap C = \emptyset  \},
\end{align*}
and $S^+ = \Sigma_0 \cup S$.
    Then, for a fixed $y \in N$, the formula
    $$
\rho_y\big( (B \setminus A) \cup C \big) = 			 \tau_{2} \Big( y \big(r(q) - r(p)\big) \Big) \quad \text{if } \Phi(q) = \chi_B, \Phi(p) = \chi_{A} \text{ and } p < q,$$
    and $\rho_y(E) = 0$ if $E \in \Sigma_0$, defines a premeasure on the semiring $S^+$. Moreover, there is a unique $\sigma$-finite measure, defined on the $\sigma$-algebra generated by $\Phi(\mathcal{A})$, that extends $\rho_y$ and is absolutely continuous with respect to $\theta$.
\end{lem}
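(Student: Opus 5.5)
The plan is to show that $\rho_y$ is well defined, finitely additive and dominated by a multiple of $\theta$ on $S^+$. Domination then turns finite additivity into countable additivity, and Carath\'eodory's theorem supplies the extension. Throughout I take $y \in \mathcal{N}^+$, which is needed for $\rho_y$ to be nonnegative; a general $y$ can then be handled by splitting into positive parts. I also use that $\mathcal{A}$ is $\sigma$-bounded and full, as in Proposition \ref{genabelian}, so that $\Phi$ exists.

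\textbf{Well-definedness.} Suppose $E \in S$ has two representations $(B\setminus A)\cup C = (B'\setminus A')\cup C'$, with $\Phi(q)=\chi_B$, $\Phi(p)=\chi_A$, $\Phi(q')=\chi_{B'}$ and $\Phi(p')=\chi_{A'}$. Lemma \ref{coremedida}\ref{measurewelldefined} gives $\chi_B=\chi_{B'}$ and $\chi_A=\chi_{A'}$ $\theta$-a.e. Since $\Phi$ is injective, $q=q'$ and $p=p'$, so the value $\tau_2\big(y(r(q)-r(p))\big)$ does not depend on the representation. By Lemma \ref{coremedida}\ref{measurecoredecomp}, the cases $E\in S$ and $E\in\Sigma_0$ do not overlap, so there is no conflict with the convention $\rho_y=0$ on $\Sigma_0$.

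\textbf{Nonnegativity and domination.} For $p<q$ in $\mathcal{A}$ we have $r(p)\le r(q)$, so $r(q)-r(p)$ is a projection in $\mathcal{N}$. The argument of Proposition \ref{coreprop1}\ref{commutetrace} uses only that $q-p$ is a finite-trace projection, and $\tau_2(r(q)-r(p))\le c\,\tau_1(q-p)<\infty$. Applying that argument in $\mathcal{N}$ gives
$$0\le \rho_y(E)=\tau_2\big((r(q)-r(p))y(r(q)-r(p))\big)\le \|y\|_{B(H)}\,\tau_2\big(r(q)-r(p)\big)\le c\|y\|_{B(H)}\,\tau_1(q-p)=c\|y\|_{B(H)}\,\theta(E).$$
The last equality holds because $\Phi$ is trace preserving and $C$ is null. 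The bound $\rho_y\le c\|y\|\theta$ therefore holds on all of $S^+$.

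\textbf{Finite additivity.} Let $E\in S$, coming from $p<q$, be a finite disjoint union of sets $E_1,\dots,E_n\in S$, the null pieces being harmless. By Lemma \ref{coremedida}\ref{measurecoretotalorder} I may choose representatives so that all the sets $A_j,B_j,A,B$ form a chain under inclusion. Disjointness, together with Lemma \ref{coremedida}\ref{measurecoredecomp} and Proposition \ref{coreprop1}\ref{faithfulness}, then forces the following: after reordering, $p=p_1<q_1=p_2<q_2=\cdots<q_n=q$. If consecutive pieces left a gap or overlapped, that would produce a set of positive measure not covered, or covered twice. Given this chain, the sum $\sum_j\tau_2\big(y(r(q_j)-r(p_j))\big)$ telescopes to $\rho_y(E)$, using linearity of $\tau_2$ on the finite-trace elements $y\,r(\cdot)$. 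Monotonicity of $\rho_y$ and finite additivity on finite disjoint unions of semiring sets follow in the usual way.

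\textbf{Countable additivity.} This is the step I expect to be the main obstacle, and the domination estimate is what resolves it. Let $E=\bigsqcup_{j\ge1}E_j$ with $E,E_j\in S^+$. By Lemma \ref{coremedida}\ref{measurecoredif}, for each $n$ the set $R_n=E\setminus\bigcup_{j\le n}E_j$ is a finite disjoint union of sets $F_k\in S^+$. Finite additivity then gives
$$0\le \rho_y(E)-\sum_{j\le n}\rho_y(E_j)=\sum_k\rho_y(F_k)\le c\|y\|\sum_k\theta(F_k)=c\|y\|\,\theta(R_n).$$
Since $\theta(E)<\infty$, we have $\theta(R_n)\downarrow0$, so the difference tends to zero and $\rho_y$ is a premeasure.

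\textbf{Extension and uniqueness.} Carath\'eodory's extension theorem extends $\rho_y$ to a measure on $\sigma(S^+)$. This $\sigma$-algebra contains the $\sigma$-algebra generated by $\Phi(\mathcal{A})$, taking the sets with $p=0$. By $\sigma$-boundedness there are $p_n\in\mathcal{A}$ with $p_n\uparrow 1$. The sets $B_n\setminus\emptyset$, where $\chi_{B_n}=\Phi(p_n)$, lie in $S$, cover $X$ up to a null set, and have $\rho_y(B_n)\le c\|y\|\tau_1(p_n)<\infty$. Hence $\rho_y$ is $\sigma$-finite on the semiring, and the extension is unique.

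\textbf{Absolute continuity.} The inequality $\rho_y\le c\|y\|\theta$ on $S^+$ passes to outer measures, so it holds on the generated $\sigma$-algebra. In particular $\theta(E)=0$ implies $\rho_y(E)=0$, which gives absolute continuity with respect to $\theta$.
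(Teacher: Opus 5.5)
Your argument is correct. Well-definedness, finite additivity (the chain $p=p_1<q_1=p_2<\cdots<q_n=q$ followed by telescoping), $\sigma$-finiteness from $p_n\uparrow 1$, and absolute continuity via $\rho_y\le c\|y\|\theta$ all match the paper. You also rightly make explicit the tacit assumptions that $y\ge 0$ and that $\mathcal{A}$ is $\sigma$-bounded and full.

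The real difference lies in the passage from finite to countable behaviour. The paper works with Royden's notion of premeasure (finitely additive and countably monotone). It first proves finite monotonicity by a long induction that reshapes a finite cover into a disjoint chain of intervals $B_k\setminus A_k$. Only then does it obtain countable monotonicity, by combining finite monotonicity with the bound $\rho_y(E')\le c\|y\|\theta(E')$ on the pieces of $E\setminus\bigcup_{k\le n}E_k$.

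You apply the same bound to the same remainder, but for an exact partition $E=\bigsqcup_j E_j$. Finite additivity therefore suffices, and the finite-monotonicity induction disappears entirely. If you feed this into Royden's form of the Carath\'eodory--Hahn theorem, you still need the standard fact that a countably additive set function on a semiring is countably subadditive. That deduction also uses only finite additivity: disjointify a cover inside $E$ using Lemma~\ref{coremedida}\ref{measurecoreint} and \ref{measurecoredif}, then use $\sum_i\rho_y(G_i)\le\rho_y(E_k)$ for disjoint $G_i\subseteq E_k$.

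So your route is shorter and sidesteps the most delicate combinatorics in the paper's proof, while the paper's route produces Royden's hypothesis verbatim. A further bonus is that your outer-measure formulation yields $\overline{\rho_y}\le c\|y\|\theta$ on all of $\sigma(S^+)$. The paper re-derives exactly this bound later, through a separate covering argument, in the proof of Theorem~\ref{functor}\ref{functorlinf}.
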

\begin{proof}
Since $\Phi$ is a $*$-isomorphism, every element in $\{ \Phi(p): p \in \mathcal{A} \}$ is a totally ordered set of projections. Since
$$
\int_{X} \Phi(p) \, d\theta = \tau(p) < \infty,
$$
we conclude that the collection $\{ \Phi(p): p \in \mathcal{A} \}$ is an ordered core in the abelian von Neumann algebra $L^{\infty}(X,\Sigma,\theta)$. Hence, Lemma \ref{coremedida} shows that $S^+$ is a semiring of sets.

    First, we show that $\rho_y$ is well defined. Let $E \in S^+$. By Lemma \ref{coremedida} \ref{measurecoredecomp}, either $E$ has measure zero or is of the form $(B \setminus A) \cup C$ for $C \in \Sigma_0$, $\chi_B = \Phi(q)$ and $\chi_A = \Phi(p)$ for some $p,q \in \mathcal{A}$. Notice that $p < q$, otherwise, $E$ must have measure zero.
    
    To check $\rho_y$ is well defined, let $E = (B' \setminus A') \cup C'$ such that $\theta(C') = 0$, $\chi_{B'} = \Phi(q')$ and $\chi_{A'} = \Phi(p')$ for $q',p' \in \mathcal{A}$. By Lemma \ref{coremedida} \ref{measurewelldefined} we have that $\Phi(p) = \Phi(p')$ and $\Phi(q) = \Phi(q')$. It follows that $p = p'$ and $q = q'$ so 
$$
\tau_2\Big( y\big( r(q) - r(p) \big)\Big) = \tau_2\Big( y\big( r(q') - r(p') \big)\Big).
$$
This proves that $\rho_y$ is a well-defined function. 

Since $r$ is order-preserving and $y$ is positive, it follows that $\rho_y$ takes nonnegative values. It is also clear that $\rho_y(\emptyset) = 0$.

We now show that $\rho_y$ is finitely additive. Let $E, \{E_1,\dots,E_n\} \in S^+$ such that $E = \cup_{k=1}^n E_k$ and the union is disjoint. We prove that $\rho_y(E) = \sum_{k=1}^n \rho_y(E_k)$. If $n=1$ there is nothing to prove, as $\rho_y$ is well defined. If $\theta(E) = 0$ then the measure of every set involved is zero and the equality is clear. If $\theta(E_k) = 0$ for some $k \in \{1,\dots,n\}$, then \ref{coremedida} \ref{measurecorecomplete} shows that $E \setminus E_k \in S^+$. Also, the proof of \ref{measurecorecomplete} shows that if $E = (B \setminus A) \cup C$, then $E \setminus E_k = (B \setminus A') \cup C'$, where $\chi_A = \chi_{A'}$ and $\theta(C') = 0$. It follows that $\rho_y(E \setminus E_k) = \rho_y(E)$ and $\rho_y(E_k) = 0$, so we can ignore the set of measure zero. After finitely many steps we may remove all sets of measure zero from the union.

Therefore, we may assume that $\theta(E_k) > 0$ for each $k \in \{1,\dots,n\}$. Let $E = (B \setminus A) \cup C$ and $E_k = (B_k \setminus A_k) \cup C_k$. By Lemma \ref{coremedida} \ref{measurecoretotalorder} we may assume that all the sets of positive measure are totally ordered by inclusion. Let $D = \cup_{k} C_k$, then we have
$$
(B \setminus A) \cup C = D \cup \bigcup_{k=1}^n (B_k \setminus A_k),
$$
and the union is disjoint. Since the union is disjoint, after a relabelling, we may assume that $A_1 \subset B_1 \subseteq A_2 \subset B_2 \subseteq \dots \subseteq A_n \subset B_n$. We now show that $B = B_n$ and that $A = A_1$. 

Suppose that $B \subseteq B_n$, then $B_n \setminus (B \cup A_n) \subseteq C$, it follows that $\theta(B_n \setminus (B \cup A_n)) = 0$. By virtue of Lemma \ref{coremedida}.\ref{measurecoretotalorder}, we have that $B_n \subseteq (B \cup A_n)$. Since $\theta(B_n \setminus A_n) > 0$, we are forced to have that $B_n = B$. If $B_n \subseteq B$, then $(B \setminus B_n) \subseteq D$, by the same argument as before, we conclude that $B = B_n$.

Suppose that $A \subseteq A_1$, then $A_1 \setminus A \subseteq C$, it follows that $\theta(A_1 \setminus A) = 0$, and by the same argument as before, we are forced to have that $A_1 = A$. If $A_1 \subseteq A$, then $(B_1 \cap A) \setminus A_1 \subseteq D$, and again, we conclude that $A = A_1$.

Since $\theta(C) = 0 = \theta(D)$ we have
\begin{align*}
\theta(B_n) - \theta(A_1) &= \theta(B) - \theta(A) = \theta\big((B \setminus A) \cup C\big) = \theta\Big( D \cup \bigcup_{k=1}^n  (B_k \setminus A_k) \Big)\\
&= \sum_{k=1}^n \theta(B_k \setminus A_k) = \sum_{k=1}^n \big(\theta(B_k) - \theta(A_k)\big) \\
& = \theta(B_n) - \theta(A_1) - \sum_{k=1}^{n-1} \Big(\theta(A_{k+1}) - \theta(B_k) \Big).
\end{align*}
It follows that $\theta(A_{k+1} \setminus \theta(B_k)) = 0$ for each $k \in \{1,\dots,n-1\}$ and Lemma \ref{coremedida} \ref{measurecoretotalorder} shows that $B_k = A_{k+1}$ for each $k \in \{1,\dots,n-1\}$. We have that $\Phi(q_n) = \chi_{B_n} = \chi_{B} = \Phi(q)$, $\Phi(p_1) = \chi_{A_1} = \chi_{A} = \Phi(p)$, and  $\Phi(q_k) = \chi_{B_k} = \chi_{A_{k+1}} = \Phi(p_{k+1})$ for each $k \in \{1,\dots,n-1\}$. It follows that $p_{k+1} = q_k$ for all $k \in \{1,\dots,n-1\}$. Hence,
\begin{align*}
    \sum_{k=1}^n \rho_{y}(E_k) &= \sum_{k=1}^n \tau_2\Big(y \big(r(q_k) - r(p_k) \big)\Big) = \sum_{k=1}^n \Big( \tau_2\big(y r(q_k) \big) - \tau_2\big(y r(p_k) \big)  \Big) \\
    &= \tau_2\big(y r(q_n)\big) - \tau_2\big(y r(p_1)\big) - \sum_{k=1}^{n-1} \Big( \tau_2\big(y r(p_{k+1}) \big) - \tau_2\big(y r(q_k) \big)  \Big)\\
    &= \tau_2\big(y r(q_n)\big) - \tau_2\big(y r(p_1)\big) = \tau_2\Big(y \big(r(q) - r(p) \big)\Big) = \rho_y(E).
\end{align*}

Notice that every term is finite, since
\begin{align*}
\tau_2\Big(y r(p)\Big) &= \tau_2\Big(y \big(r(p) - r(0)\big) \Big) \\
&\leq  \|y\|_{B(H_2)} \,  \tau_2\big(r(p) - r(0)\big) \leq c \|y\|_{B(H_2)} \,  \tau_1(p) < \infty.    
\end{align*}
Thus, we have proven that $\rho_y$ is finitely additive.
 
We now show that $\rho_y$ is finitely monotone. Let $E, \{E_k\}_{k=1}^n \in S^+$, such that $E \subseteq \cup_k E_k$. We will show that $\rho_y(E) \leq \sum_{k=1}^n \rho_y(E_k)$. By the same argument used before, we may assume that $\theta(E) > 0$ and that $\theta(E_k) > 0$. Lemma \ref{coremedida} \ref{measurecoretotalorder} we may pick a totally ordered collection of sets $\{A,B,\{A_k\},\{B_k\}\}$ such that $\Phi(p) = \chi_{A}$, $\Phi(q) = \chi_B$, $\Phi(p_k) = \chi_{A_k}$, $\Phi(q_k) = \chi_{B_k}$ for some $\{p,q,\{p_k\},\{q_k\}\} \in \mathcal{A}$, $E = (B \setminus A) \cup C$ and $E_k = (B_k \setminus A_k) \cup C_k$ with $\theta(C) = 0 = \theta(C_k)$. After a relabelling we may assume that $A_1 \subseteq A_2 \subseteq \dots \subseteq A_n$. Let $D = \cup_{k=1}^n C_k$, we have
$$
(B \setminus A) \cup C \subseteq D \cup \bigcup_{k=1}^n (B_k \setminus A_k).
$$
Since $\rho_y\big( B \setminus A\big) = \rho_y(E)$, without loss of generality, we may suppose that $C = \emptyset$. We will show that there exists a collection of sets $\{B_{jk},A_{jk}\}_{j=0,k=0}^n \subseteq \{A_j,B_j\}_{j=1}^n$ such that for each $j \in \{0,\dots,n\}$ we have $A_{jk} \subseteq B_{jk}$ for all $k$, $A_{jk_1} \subseteq A_{jk_2}$ for $k_1 < k_2$, $\bigcup_{k=1}^n (B_{jk} \setminus A_{jk}) = \bigcup_{k=1}^n (B_k \setminus A_k)$, also $\rho_y\big( B_{jk} \setminus A_{jk} \big) \leq \rho_y\big( B_{(j-1)k} \setminus A_{(j-1)k} \big)$ for all $k \in \{1,\dots,n\}$, and $\big( B_{jk} \setminus A_{jk} \big) \cap \big( B_{jk'} \setminus A_{jk'} \big) = \emptyset$ for $k \leq j$ and $k' > k$. We proceed by induction over $j$.

For the base case, set $B_{00} = B_n$, $A_{00} = A_1$, $B_{0k} = B_k$ and $A_{0k} = A_k$ for $k \in \{1,\dots,n\}$. All the conditions follow immediately. So we prove the inductive step. Set $B_{jk} = B_{(j-1)k}$ and $A_{jk} = A_{(j-1)k}$ for each $k \in \{0,\dots,j\}$. For $k' > j$ define
$$
B_{jk'} = \begin{cases}
            A_{(j-1)k'}, & \text{if } B_{(j-1)k'} \subseteq B_{(j-1)j},\\
            B_{(j-1)k'}, & \text{otherwise.}
		 \end{cases}
$$
and
$$
A_{jk'} = \begin{cases}
            B_{(j-1)j}, & \text{if } A_{(j-1)k'} \subseteq B_{(j-1)j} \subseteq B_{(j-1)k'},\\
            A_{(j-1)k'}, & \text{otherwise.}
		 \end{cases}
$$

Notice that for $k' > j$ we have $(B_{jk'} \setminus A_{jk'}) \subseteq (B_{(j-1)k'} \setminus A_{(j-1)k'})$, therefore we have
\begin{align*}
\rho_y\Big( B_{jk'} \setminus A_{jk'} \Big) &= \tau_2\Big(y \big( r(q_{jk'}) - r(p_{jk'}) \big)\Big) \\
&\leq \tau_2\Big(y \big( r(q_{(j-1)k'}) - r(p_{(j-1)k'}) \big)\Big) \\
&= \rho_y\Big( B_{(j-1)k'} \setminus A_{(j-1)k'} \Big).
\end{align*}
And if $k \leq j$ we have 
$$
(B_{jk} \setminus A_{jk}) \cap (B_{jk'} \setminus A_{jk'}) \subseteq (B_{(j-1)k} \setminus A_{(j-1)k}) \cap (B_{(j-1)k'} \setminus A_{(j-1)k'}) = \emptyset. 
$$
It also follows from the construction that
$$
(B_{jj} \setminus A_{jj}) \cup (B_{jk'} \setminus A_{jk'}) = (B_{(j-1)j} \setminus A_{(j-1)j}) \cup (B_{(j-1)k'} \setminus A_{(j-1)k'}),   
$$
hence
\begin{align*}
\bigcup_{k=1}^n (B_{jk} \setminus A_{jk}) &= \bigcup_{k=1}^{j-1} (B_{jk} \setminus A_{jk}) \cup \bigcup_{k'=j+1}^n \Big( (B_{jj} \setminus A_{jj}) \cup (B_{jk'} \setminus A_{jk'}) \Big) \\
&= \bigcup_{k=1}^{j-1} (B_{(j-1)k} \setminus A_{(j-1)k}) \\
&\cup \bigcup_{k'=j+1}^n \Big( (B_{(j-1)j} \setminus A_{(j-1)j}) \cup (B_{(j-1)k'} \setminus A_{(j-1)k'}) \Big) \\
&= \bigcup_{k=1}^{n} (B_{(j-1)k} \setminus A_{(j-1)k}) = \bigcup_{k=1}^{n} (B_{k} \setminus A_{k}). 
\end{align*}

By the induction hypothesis $A_{jk_1} \subseteq A_{j k_2} \subseteq A_{jj}$ for all $k_1 < k_2 \leq j$. For $k' \in \{j+1,\dots,n\}$ we may relabel the sets to make sure that $\{A_{jk'}\}_{k'=j+1}^{n}$ is increasing by inclusion. This finishes the induction.

We get
$$
(B \setminus A) \subseteq D \cup \bigcup_{k=1}^n (B_{nk} \setminus A_{nk}) = D \cup \bigcup_{k=1}^n (B_k \setminus A_k).
$$
Once more, total order and the fact that $\theta(D) = 0$ forces that $B \subseteq B_{nn}$ and $A_{n1} \subseteq A$, so 
$$
(B \setminus A) \subseteq (B_{nn} \setminus A_{n1}) \cup D =  D \cup \bigcup_{k=1}^n (B_{nk} \setminus A_{nk}). 
$$
Since $\rho_y$ is finitely additive, and the collection is disjoint, we have that
$$
\rho_y\big( (B_{nn} \setminus A_{n1}) \cup D \big) = \sum_{k=1}^{n} \rho_y\big( B_{nk} \setminus A_{nk} \big), 
$$
hence
\begin{align*}
    \rho_y(E) &= \rho_y(B \setminus A) = \tau_2\Big( y \big( r(q) - r(p) \big)\Big) \leq \tau_2\Big( y \big( r(q_{nn}) - r(p_{n1}) \big)\Big) \\ &= \rho_y\big( (B_{nn} \setminus A_{n1}) \cup D \big) = \sum_{k=1}^{n} \rho_y\big( B_{nk} \setminus A_{nk} \big)\\
&\leq \sum_{k=1}^{n} \rho_y\big( B_{k} \setminus A_{k} \big) = \sum_{k=1}^n \rho_y(E_k).
\end{align*}
This proves that $\rho_y$ is finitely monotone.

We now show that $\rho_y$ is countably monotone. Suppose that $E$, $\{E_k\}_{k=1}^\infty$  belong to $S^+$ such that $E \subseteq \bigcup_{k=1}^\infty E_k$. Our goal is to show that $\rho_y(E) \leq \sum_{k=1}^\infty \rho_y(E_k)$. 

For each $n$, after finitely many applications of Lemma \ref{coremedida} \ref{measurecoredif} we may find a finite disjoint collection $\{E'_j\}_{j = 1}^m \in S^+$ such that
$$
E \setminus \Big(\bigcup_{k=1}^n E_{k}\Big) = \bigcup_{j=1}^m E'_j.  
$$
Using Lemma \ref{coremedida} \ref{measurecoretotalorder}, we may write $E'_j = (B'_j \setminus A'_j) \cup C'_j$ with $A'_j \subseteq B'_j$, $\theta(C'_j) = 0$, $\{ A'_j\} \cup \{B'_j\}$ totally ordered by inclusion, $\{p'_j\} \cup \{q'_j\} \subseteq \mathcal{A}$, $\Phi(p'_j) = \chi_{A'_{j}}$, and $\Phi(q'_j) = \chi_{B'_{j}}$. 

Hence
\begin{align*}
\rho_y(E'_j) &= \tau_2\Big(y \big( r(q'_j) - r(p'_j) \big) \Big) \leq \|y\|_{B(H_2)} \, \tau_2\Big(\big( r(q'_j) - r(p'_j) \big) \Big) \\
&\leq c \|y\|_{B(H_2)} \, \tau_1\Big(\big( q'_j - p'_j \big) \Big) = c \|y\|_{B(H_2)} \, \theta(B'_j \setminus A'_j) \\
&= c \|y\|_{B(H_2)} \, \theta(E'_j).  
\end{align*}
Addition over $j$ yields
\begin{align*}
\sum_{j=1}^m \rho_y(E'_j) &\leq c \|y\|_{B(H_2)} \, \sum_{j=1}^m \theta(E'_j) = c \|y\|_{B(H_2)} \, \theta\Big(\cup_{j=1}^m E'_j\Big) \\
&= c \|y\|_{B(H_2)} \, \theta\Big(E \setminus \cup_{k=1}^n E_k\Big).  
\end{align*}
Since $E \subseteq \bigcup_{k=1}^{n} E_k \cup \bigcup_{j=1}^m E'_j$, finite monotonicity yields
\begin{align*}
    \rho_y(E) &\leq \sum_{k=1}^n \rho_y(E_k) + \sum_{j=1}^m \rho_y(E'_j) \leq \sum_{k=1}^n \rho_y(E_k) + c \|y\|_{B(H_2)} \, \theta\Big(E \setminus \cup_{k=1}^n E_k\Big).
\end{align*}
Letting $n \to \infty$ and noting that $\theta(E \setminus \cup_{k=1}^n E_k) \to 0$ we get $\rho_y(E) \leq \sum_{k=1}^\infty \rho_y{E_k}$. This shows that $\rho_y$ is countably monotone and proves that $\rho_y$ is a premeasure on the semiring $S^+$. 

We now show that $\rho_y$ is $\sigma$-finite. Notice that for any $E \in S^+$ either $\rho_y(E) = 0$ or $\rho_y(E) = \tau_2\Big(y \big(r(q) - r(p) \big) \Big)$ for some $p,q \in \mathcal{A}$ satisfying $p \leq q$. Let $\{p_n\}$ be a sequence in $\mathcal{A}$ such that $p_n \uparrow 1_{\mathcal{M}}$. Then $\Phi(p_n) \uparrow 1$. Let $E_n \in \Sigma$ satisfy $\Phi(p_n) = \chi_{E_n}$. Then $E_n \uparrow X$ and
$$
\rho_y(E_n) = \tau_2(y \, r(p_n)) \leq \|y\|_{B(H_2)} \tau_2(r(p_n)) \leq c \|y\|_{B(H_2)} \tau_1(p_n) < \infty.
$$
Therefore, $\rho_y$ is $\sigma$-finite.


Let $(\rho_y)^*$ be the outer measure induced by $\rho_y$. By the Caratheodory-Hahn theorem (\cite[Page~361]{royden}), there exists a unique $\sigma$-finite measure $\overline{\rho_y}$ defined over $(\rho_y)^*$-measurable sets extending $\rho_y$. In particular, it extends $\rho_y$ to the $\sigma$-ring generated by $S^+$.

To show that $\rho_y$ is absolutely continuous with respect to $\theta$. Let $E$ be a $\sigma(S^+)$-measurable set such that $\theta(E) = 0$. Fix $\epsilon > 0$, then there exists a sequence $\{ E_n \} \in S^+$, such that $E \subseteq \cup_n E_n$ and $\sum_{n} \theta(E_n) < \frac{\epsilon}{c \|y\|_{B(H_2)}}$. By Lemma \ref{coremedida}.\ref{measurecoretotalorder} we may assume that $E_n = (B_n \setminus A_n) \cup C_n$ such that $\theta(C_n) = 0$, $\chi_{B_n} = \Phi(p_n)$ and $\chi_{A_n} = \Phi(p_n)$. Then
\begin{align*}
\rho_y(E) &\leq \sum_{n}\rho_y(E_n) = \sum_{n} \tau_2\Big( y \big( r(p_n) - r(q_n) \big)\Big) \\
&\leq \sum_{n} c\|y\|_{B(H_2)} \tau_1\Big( p_n - q_n\Big) \\
&= c\|y\|_{B(H_2)} \sum_{n} \theta\big( B_n \setminus A_n \big) = c\|y\|_{B(H_2)} \sum_{n} \theta\big(E_n\big) < \epsilon. 
\end{align*}
Letting $\epsilon \to 0$ shows that $\rho_y(E) = 0$, which shows that $\rho_y$ is absolutely continuous with respect to $\theta$ and completes the proof.
     
\end{proof}

We now show the main result of this section, that a core morphism induces a linear map on the von Neumann algebras involved.

\begin{thm}\label{functor}
Let $(\mathcal{M},\tau_1)$ and $(\mathcal{N},\tau_2)$ be von Neumann algebras with semifinite normal faithful traces, let $\mathcal{A} \subseteq P(\mathcal{M})$ be a $\sigma$-bounded full ordered core and a core morphism $r:\mathcal{A} \to P(\mathcal{N})$ with constant $c$. Then, there exists a linear $*$-preserving map $\vartheta: \mathcal{N} \to \mathcal{M}$ such that:
    \begin{enumerate}[label=(\roman*)]
        \item \label{functorpositive} If $y \in \mathcal{N}^+$, then $\vartheta(y) \in \mathcal{M}^+$.
        \item \label{functorlinf} For each $y \in \mathcal{N}^+$, $\|\vartheta(y)\|_{B(H_1)} \leq c \|y\|_{B(H_2)}$.
        \item \label{functorincreasing} If $\{y_n\}_n$ and $y$ belong to $\mathcal{N}^+$ such that $y_n \uparrow y$, then $\vartheta(y_n) \uparrow \vartheta(y)$.

        \item \label{functorlinear} For each $y \in \mathcal{N}$ and all $p,q \in \mathcal{A}$ such that $p \leq q$, the equality
        \begin{equation}\label{taufunctor}
            \tau_1\big( (q-p) \vartheta(y)\big) = \tau_2\Big( \big( r(q) - r(p) \big) y \Big).
        \end{equation}
        holds.
        \item \label{functorcoreproj} If $c = 1$, $p \in \mathcal{A}$, and $\tau_2(r(p)) = \tau_1(p)$, then $\vartheta(r(p)) = p$.
        \item \label{functormult} If $c = 1$, $\tau_2(r(p)) = \tau_1(p)$ for all $p \in \mathcal{A}$, and $y,z \in \mathcal{N}_{r(\mathcal{A})}^+$, then $\vartheta(yz) = \vartheta(y)\vartheta(z)$. Here, $\mathcal{N}_{r(\mathcal{A})}$ denotes the von Neumann algebra generated by $r(\mathcal{A})$. 
        \item \label{functorringiso} If $\tau_2(r(p)) = \tau_1(p)$ for all $p \in \mathcal{A}$ and $\sup_{p \in \mathcal{A}} \{ r(p) \} = 1_{\mathcal{N}}$, the restriction of $\vartheta$ to $\mathcal{N}_{r(A)}$ is a trace preserving $*$-algebra homomorphism between $\mathcal{N}_{r(A)}$ and $\mathcal{M}_{\mathcal{A}}$ 
    \end{enumerate}
\end{thm}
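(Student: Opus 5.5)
We construct $\vartheta$ as a Radon--Nikodym derivative. By Proposition \ref{genabelian} and Theorem \ref{abeliano} fix the trace preserving $*$-isomorphism $\Phi:\mathcal{M}_{\mathcal{A}}\to L^\infty(X,\Sigma,\theta)$. As in the proof of Lemma \ref{lemaigualador}, we may take $\Sigma$ to be the $\sigma$-algebra generated by $\Phi(\mathcal{A})$, modulo null sets. For $y\in\mathcal{N}^+$, Lemma \ref{inducedpremeasure} gives a $\sigma$-finite measure $\overline{\rho_y}$ on $\Sigma$ extending $\rho_y$ and absolutely continuous with respect to $\theta$. We set
$$
\vartheta(y)=\Phi^{-1}\Big(\frac{d\overline{\rho_y}}{d\theta}\Big).
$$
For general $y$ we write $y=y_1-y_2+i(y_3-y_4)$ with $y_k\in\mathcal{N}^+$ (real and imaginary parts, then positive and negative parts) and extend by linearity. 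To see that this is well defined and linear, note that $\rho_{y+z}=\rho_y+\rho_z$ and $\rho_{\alpha y}=\alpha\rho_y$ on $S^+$, because $\tau_2\big(y(r(q)-r(p))\big)$ is additive in $y$; every value is finite since $\tau_2(r(q)-r(p))<\infty$. Uniqueness of the Carath\'eodory extension then shows the densities add. Since $\Phi^{-1}$ maps real functions to self adjoint elements, $\vartheta$ is $*$-preserving.

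Items \ref{functorpositive} and \ref{functorlinf} come from bounding the density. Positivity holds because $\rho_y\ge 0$. For the bound, the estimate in the proof of Lemma \ref{inducedpremeasure} gives $\rho_y(E)\le c\|y\|\theta(E)$ on $S^+$. This passes to the outer measure, hence to all of $\Sigma$, so $0\le d\overline{\rho_y}/d\theta\le c\|y\|$ $\theta$-a.e. Item \ref{functorlinear} is then the defining identity. For $y\ge0$ we compute
$$
\tau_1\big((q-p)\vartheta(y)\big)=\int_{B\setminus A}\frac{d\overline{\rho_y}}{d\theta}\,d\theta=\rho_y(B\setminus A)=\tau_2\big((r(q)-r(p))y\big),
$$
using Proposition \ref{coreprop1}\ref{commutetrace} to move $q-p$ across $y$. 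Linearity extends this to all $y\in\mathcal{N}$. For item \ref{functorincreasing}, if $y_n\uparrow y$ then the densities increase and are bounded by $d\overline{\rho_y}/d\theta$. Normality of $\tau_2$ gives $\rho_{y_n}(E)\uparrow\rho_y(E)$ on $S^+$. By monotone convergence, the pointwise limit $g$ of the densities satisfies $\int_E g\,d\theta=\rho_y(E)$ on $S^+$, and the semiring uniqueness argument of Lemma \ref{lemaigualador} forces $g=d\overline{\rho_y}/d\theta$. Transporting back through $\Phi$ gives $\vartheta(y_n)\uparrow\vartheta(y)$.

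For item \ref{functorcoreproj}, take $q'\in\mathcal{A}$ with $q'\le p$. Then $\tau_2\big(r(q')r(p)\big)=\tau_2(r(q'))$ because $r$ is monotone. On the other hand, $\tau_2(r(p)-r(q'))\le\tau_1(p-q')$, and combining this with $\tau_2(r(p))=\tau_1(p)$ and $c=1$ bounds the comparison. For $q'\ge p$ we get $\tau_2\big(r(q')r(p)\big)=\tau_2(r(p))=\tau_1(p)=\tau_1(q'p)$. The hard part is the case $q'\le p$: we need $\tau_1(q'\vartheta(r(p)))=\tau_1(q')$, i.e.\ $\tau_2(r(q'))=\tau_1(q')$, which the hypothesis only gives at $p$ itself. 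Instead we use $0\le\vartheta(r(p))\le 1$ (item \ref{functorlinf} with $c=1$) together with
$$
\tau_1\big(p\,\vartheta(r(p))\big)=\tau_2(r(p))=\tau_1(p)
$$
and
$$
\tau_1\big((q'-p)\vartheta(r(p))\big)=\tau_2\big((r(q')-r(p))r(p)\big)=0\quad\text{for } q'\ge p.
$$
These give $\Phi(\vartheta(r(p)))\le1$ with integral equal to $\theta(A)$ over $A$, so it equals $1$ a.e.\ on $A$. They also give vanishing integrals on the sets $B\setminus A$ with $B\supseteq A$, and hence, by fullness and $\sigma$-boundedness, on the complement of $A$. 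Therefore $\Phi(\vartheta(r(p)))=\chi_A$, that is, $\vartheta(r(p))=p$.

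Items \ref{functormult} and \ref{functorringiso} I expect to be the main obstacle. For \ref{functormult}, item \ref{functorcoreproj} shows that $\vartheta$ maps each $r(p)$ to $p$. The projections $r(p)$ commute and are totally ordered, so $\vartheta(r(p)r(p'))=\vartheta(r(p\wedge p'))=pp'$, which is multiplicativity on $r(\mathcal{A})$. By linearity this extends to the span of $r(\mathcal{A})$. Then weak$^*$/monotone approximation (Kaplansky density combined with item \ref{functorincreasing} and the normality in item \ref{functorlinear}) extends it to $\mathcal{N}^+_{r(\mathcal{A})}$. The delicate point is that products of increasing nets must be controlled; I would first handle spectral projections of elements of $\mathcal{N}_{r(\mathcal{A})}$ and then use monotone limits of simple functions. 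Item \ref{functorringiso} then follows. Trace preservation on $r(\mathcal{A})$ is the hypothesis, and the assumption $\sup r(p)=1$ makes $\vartheta$ normal and unital on $\mathcal{N}_{r(\mathcal{A})}$, so the trace identity extends to all of $\mathcal{N}^+_{r(\mathcal{A})}$. Injectivity comes from faithfulness, and the range is $\mathcal{M}_{\mathcal{A}}$ because it contains $\mathcal{A}$ and is a von Neumann algebra.
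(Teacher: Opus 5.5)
Your items (i)--(v) are correct and follow the paper's route. You use the same Radon--Nikodym construction and linear extension, and you get the bound in (ii) from $\rho_y(E)\le c\|y\|\,\theta(E)$ on $S^+$; your outer-measure phrasing is a tidier form of the paper's covering argument. For (iii) you use monotone convergence plus uniqueness on the semiring. In (v) you use the same two facts as the paper, $0\le\vartheta(r(p))\le 1$ and the trace identities from (iv), read in the function picture.

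The genuine gap is (vi), and therefore (vii). Multiplicativity on the span of $r(\mathcal{A})$ is immediate, but your passage to $\mathcal{N}^+_{r(\mathcal{A})}$ is not yet an argument. Kaplansky density produces bounded nets converging strongly, whereas the only continuity of $\vartheta$ you have proved is along increasing sequences, namely (iii). Multiplication is also only separately continuous. To make that route work you would have to show $\vartheta$ is $\sigma$-weakly continuous on bounded sets, and then still fix one factor at a time. The continuity can be derived from (ii) and (iv), since the functionals $x\mapsto\tau_1\big((q-p)x\big)$ span a norm-dense subspace of the predual of $\mathcal{M}_{\mathcal{A}}$. None of this is in the proposal.

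Your fallback, ``first handle spectral projections'', is exactly the nontrivial step, and you give no mechanism for it. Under $\psi:\mathcal{N}_{r(\mathcal{A})}\to L^\infty(Y,\Sigma_2,\theta_2)$, a general projection corresponds to an arbitrary set of the $\sigma$-algebra generated by the chain $\psi(r(\mathcal{A}))$. Such a set is not an increasing limit of finite combinations of the $r(q)-r(q')$, so a uniqueness-of-measures or monotone-class argument is unavoidable.

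The paper supplies this by letting one factor be arbitrary from the start. Fix $y\in\mathcal{N}^+_{r(\mathcal{A})}$ and $p\in\mathcal{A}$, and compare the finite measures
\[
\zeta(E)=\tau_1\big(p\,\vartheta(y)\,\vartheta(\psi^{-1}(\chi_E))\big),\qquad
\eta(E)=\tau_1\big(p\,\vartheta(y\,\psi^{-1}(\chi_E))\big)
\]
on $\Sigma_2$. These are countably additive by (iii) and normality of $\tau_1$.

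When $\chi_E=\psi(r(q))$, (v) gives $\vartheta(r(q))=q$, so $\zeta(E)=\tau_1\big((p\wedge q)\vartheta(y)\big)=\tau_2\big(r(p\wedge q)y\big)$ by (iv). Since $yr(q)=r(q)y$, (iv) also gives $\eta(E)=\tau_2\big(r(p)\,y\,r(q)\big)=\tau_2\big(r(p\wedge q)y\big)$. Hence $\zeta=\eta$ on the semiring generated by $\psi(r(\mathcal{A}))$. They also agree on $Y$: $\tau_1(q\,\vartheta(1_{\mathcal{N}}))=\tau_2(r(q))=\tau_1(q)$ for all $q$ forces $\vartheta(1_{\mathcal{N}})=1_{\mathcal{M}}$ by Lemma \ref{lemaigualador}. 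So $\zeta=\eta$ on all of $\Sigma_2$, and Lemma \ref{lemaigualador} gives $\vartheta(ye)=\vartheta(y)\vartheta(e)$ for every projection $e\in\mathcal{N}_{r(\mathcal{A})}$.

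Now approximate $z$ from below by simple elements $z_n$ and use (iii) on both sides. On one side $yz_n\uparrow yz$; on the other $\vartheta(y)\vartheta(z_n)\uparrow\vartheta(y)\vartheta(z)$, because everything lies in the abelian algebra $\mathcal{M}_{\mathcal{A}}$.

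For (vii), note that $\tau_2(r(p))=\tau_1(p)$ for all $p$ makes $r$ a core morphism with constant $1$, so (vi) applies. Splitting into positive parts then gives multiplicativity on $\mathcal{N}_{r(\mathcal{A})}$. Trace preservation follows directly from (iv) with $p_n\uparrow 1$, $r(p_n)\uparrow 1_{\mathcal{N}}$ and normality of both traces. It does not come from a normality of $\vartheta$ that you have not established.
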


\begin{proof}
    Let $\Phi: \mathcal{M}_{\mathcal{A}} \to L^{\infty}(X,\Sigma,\theta)$ be the trace preserving $*$-algebra isomorphism given by Theorem \ref{abeliano}.
    
    First, we define the map $\vartheta$ over $\mathcal{N}^+$. Fix $y \in \mathcal{N}^+$ and let $\rho_y$ be the measure on the $\sigma$-ring generated by $\Phi(\mathcal{A})$ from Lemma \ref{inducedpremeasure}.  

    By the Radon-Nikodym theorem, there exists a nonnegative $\sigma(\Phi(\mathcal{A}))$-measurable function $h_y$ such that
    $$
\rho_y(E) = \int\limits_{E} h_y \, d\theta, \quad \forall E \in \sigma(\Phi(\mathcal{A})). 
    $$
    Set $\vartheta(y) := \Phi^{-1}(h_y)$. This defines a map $\vartheta: \mathcal{N}^+ \to \mathcal{M}_{\mathcal{A}}^+$. We will show that this map can be uniquely extended to a linear map defined on $\mathcal{N}$.

    Let $\alpha, \beta \in [0,\infty)$, $y,x \in \mathcal{N}^+$, $E \in S^+$ and set $z = \alpha x + \beta y$.  By Lemma \ref{inducedpremeasure}, for any $(B \setminus A) \cup C \in S^+$ we have
    \begin{align*}
\rho_z\Big( \big( B \setminus A\big) \cup C \Big) &= \tau_2\Big( z \big(r(q) - r(p) \big) \Big) = \tau_2\Big( (\alpha x + \beta y) \big(r(q) - r(p) \big) \Big) \\
&= \alpha \tau_2\Big( x \big(r(q) - r(p) \big) \Big) + \beta \tau_2\Big( y \big(r(q) - r(p) \big) \Big) \\
&= \alpha \rho_x\Big( \big( B \setminus A\big) \cup C \Big) + \beta \rho_y\Big( \big( B \setminus A\big) \cup C \Big).
    \end{align*}
    By uniqueness of the Caratheodory-Hahn constuction, it follows that $\rho_z = \alpha \rho_x + \beta \rho_y$. Uniqueness of the Radon-Nikodym derivative for $\sigma$-finite measures shows that $h_z = \alpha h_x + \beta h_y$. An application of $\Phi^{-1}$ shows that $\vartheta(z) = \alpha \vartheta(x) + \beta \vartheta(y)$. This shows that $\vartheta$ is linear on the convex cone $\mathcal{N}^+$.

    Since the traces $\tau_1$ and $\tau_2$ are faithful, it immediately follows that $\mathcal{N}_h$ and ${\mathcal{M}_{\mathcal{A}}}_h$ are Archimedean $\mathbb{R}$-vector lattices. Then, by \cite[Theorem~1.10]{positive} there is a unique $\mathbb{R}$-linear map from $\mathcal{N}_h \to {\mathcal{M}_{\mathcal{A}}}_h$ that extends $\vartheta$.

    We now extend $\vartheta$ to any operator in $\mathcal{N}$. Fix $y \in \mathcal{N}$ and define
    $$
\vartheta(y) = \frac{1}{2} \vartheta \Big(y + y^* \Big) - \frac{i}{2} \vartheta \Big( (iy) + (iy)^*  \Big). 
    $$
    Notice that $(y + y^*)$ and $\big( (iy) + (iy)^* \big)$ are self adjoint so their images under $\vartheta$ are defined. Also, if $y$ is self adjoint $\frac{1}{2} (y + y^*) = y$ and $\big( (iy) + (iy)^* \big) = 0$, so $\vartheta$ remains well defined on $\mathcal{N}_h$.

    We now show that $\vartheta$ is a $\mathbb{C}$-linear map. Let $\alpha = a + ib$ for $a,b \in \mathbb{R}$ and $x,y \in \mathcal{N}$. Then,
    \begin{align*}
        \vartheta(\alpha y) &= \frac{1}{2} \vartheta \Big(  ( \alpha y + \overline{\alpha} y^*) \Big) - \frac{i}{2} \vartheta \Big( (i\alpha y + \overline{i\alpha} y^*) \Big) \\
        &= \frac{1}{2} \vartheta \Big( a (y + y^*) + b\big( (iy) + (iy)^* \big) \Big) \\
        &- \frac{i}{2} \vartheta \Big( a \big( (iy) + (iy)^* \big) -b \big( y + y^* \big) \Big)\\
        &= \frac{a}{2} \vartheta (y + y^*) + \frac{b}{2} \vartheta \big( (iy)+(iy)^* \big) - i\frac{a}{2} \vartheta \big( (iy)+(iy)^* \big) + i\frac{b}{2} \vartheta (y + y^*) \\
        &= (a+ib)\frac{1}{2} \vartheta (y + y^*) - (a+ib) \frac{i}{2}\vartheta \big( (iy)+(iy)^* \big) \\
        &= \alpha \vartheta(y).
    \end{align*}
And
    \begin{align*}
        \vartheta(x + y) &= \frac{1}{2} \vartheta \Big(  ( x+y) + (x+y)^* \Big) - \frac{i}{2} \vartheta \Big( (ix+iy) + (ix+iy)^* \Big) \\
        &= \frac{1}{2} \vartheta \Big(  ( x+x^*) + (y+y^*) \Big) - \frac{i}{2} \vartheta \Big( \big(ix+ (ix)^* \big) + \big(iy+ (iy)^* \big) \Big) \\
        &= \frac{1}{2} \vartheta \Big(x + x^* \Big) + \frac{1}{2} \vartheta \Big(y + y^* \Big) - \frac{i}{2} \vartheta \Big( ix+ (ix)^*  \Big) -\frac{i}{2} \vartheta \Big( iy+ (iy)^* \Big)\\
        &= \vartheta(x) + \vartheta(y).
    \end{align*}
    This shows that $\vartheta$ is a $\mathbb{C}$-linear map.

    Since 
    \begin{align*}
\vartheta(y^*) &= \frac{1}{2} \vartheta \Big(y^* + y^{**} \Big) - \frac{i}{2} \vartheta \Big( (iy^*) + (iy^*)^*  \Big) \\
&= \frac{1}{2} \vartheta \Big(y + y^{*} \Big) - \frac{i}{2} \vartheta \Big( -(iy)^* - (iy)  \Big) \\
&= \frac{1}{2} \vartheta \Big(y + y^{*} \Big) - \frac{\overline{i}}{2} \vartheta \Big( (iy)^* + (iy)  \Big) = \big(\vartheta(y)\big)^*,
    \end{align*}
    we conclude that $\vartheta$ is a $^*$-preserving map.

    Statement \ref{functorpositive} follows immediately from the definition of $\vartheta$. To show statement \ref{functorlinf}, fix $y \in \mathcal{N}^+$, and let $h_y = \Phi(\vartheta(y))$. Let $\{p_n\} \in \mathcal{A}$ such that $p_n \uparrow 1_{\mathcal{M}}$ and $A_n \in \sigma(\Phi(\mathcal{A}))$ such that $\chi_{A_n} = \Phi(p_n)$.
    
    Define the sets 
    $$
E_{n,m} = \left\{ s \in A_{m}: h_y(s) > c\|y\|_{B(H_2)} + \frac{1}{n}  \right\}.
    $$
    Since $h_y$ is $\sigma(\Phi(\mathcal{A}))$-measurable, for every $\delta > 0$, there exists a sequences $\{p_k\}$, $\{q_k\}$ in $\mathcal{A}$ and a set $C$ such that $\theta(C) = 0$, 
    $$
E_{n,m} \subseteq C \cup \bigcup_{k=1}^\infty (B_k \setminus A_k) = G_{\delta},  
    $$
    satisfying $\theta(G_\delta) + \delta < \theta(E_{n,m})$,
    and $\chi_{B_k} = \Phi(q_k)$, $\chi_{A_k} = \Phi(p_k)$ for each $k \in \mathbb{N}$.

    Integration yields
    \begin{align*}        
 \Big(c\|y\|_{B(H_2)} + \frac{1}{n}\Big) \theta(E_{n,m}) &= \int\limits_{E_{n,m}} h_y \, d\theta \leq \rho_y(G_\delta) \leq \sum_{k=1}^\infty \rho_y(B_k \setminus A_k) \\
 &= \sum_{k=1}^\infty  \tau_2\Big(y \big( r(q_k) - r(p_k) \big)\Big) \\
 &\leq \|y\|_{B(H_2)}\sum_{k=1}^\infty    \tau_2\Big( r(q_k) - r(p_k)  \big)\Big)\\
 &\leq c \|y\|_{B(H_2)} \sum_{k=1}^\infty \tau_1(q_k - p_k) = c\|y\|_{B(H_2)} \theta(G_\delta)\\
 &\leq c\|y\|_{B(H_2)} \Big(\delta + \theta(E_{n,m}) \Big)
    \end{align*}
    Letting $\delta \downarrow 0$ yields a contradiction unless $\theta(E_{n,m}) = 0$. Taking limits when $n,m \to \infty$ shows that $h_y \leq c\|y\|_{B(H_2)}$ $\theta$-almost everywhere. An application of $\Phi^-1$ shows that $\|\vartheta(y)\|_{B(H_1)} \leq c\|y\|_{B(H_2)}$ and proves statement \ref{functorlinf}. 

To show statement \ref{functorincreasing}, suppose that $y_n \uparrow y$. Since $\vartheta$ is a positive linear operator $\vartheta(y_n)$ is increasing and $\vartheta(y_n) \leq \vartheta(y)$. Therefore, $\vartheta(y_n) \uparrow z \leq \vartheta(y)$ for some $z \in \mathcal{M}_{\mathcal{A}}^+$. Let $\{p_m\} \in \mathcal{A}$ satisfy $p_m \uparrow 1$ and $\{U_m\} \in \Sigma$ satisfying $\chi_{U_m} = \Phi(p_m)$, then
\begin{align*}
    \tau_1(p_m \vartheta(y_n) p_m) &= \tau_1(\vartheta(y_n) p_m) = \int_{U_m} h_{y_n} \, d\theta = \tau_2(y_n r(p_m)) \\
    &= \tau_2(r(p_m) y_n r(p_m)) 
\end{align*}
Letting $n \to \infty$ yields $$\tau_1(p_m z \, p_m) = \int_{U_m} z \, d\theta = \tau_2(r(p_m) y \, r(p_m)) = \tau_1(p_m \vartheta(y) p_m).$$ 
From Lemma \ref{lemaigualador}, it follows that $p_m z \, p_m = p_m \vartheta(y) \, p_m$ and letting $m \to \infty$ shows that $\vartheta(y) = z$ completing the proof of (iii).

To prove \ref{functorlinear}, notice that by construction, the equality (\ref{taufunctor}) holds for $y \in \mathcal{N}^+$. Linearity of the map $\vartheta$ and of the traces $\tau_1,\tau_2$ extends the equality to all $y \in \mathcal{N}$.

To prove \ref{functorcoreproj}, let $p \in \mathcal{A}$ such that $\tau_1(p) = \tau_2(r(p))$. By \ref{functorlinf}, we have $$\|\vartheta(r(p))\|_{B(H_1)} \leq c \|r(p)\|_{B(H_2)} = 1$$. 

An application of the formula (\ref{taufunctor}) yields
$$
\tau_1(p \, \vartheta(r(p))) = \tau_2\Big( \big(r(p)-r(0)\big) r(p) \Big) = \tau_2(r(p)) = \tau_1(p). 
$$

Since $\vartheta(r(p))$ and $p$ commute and are positive, we get 
$$
p \, \vartheta(r(p)) = \vartheta(r(p)) p \leq \|\vartheta\big( r(p)\big)\|_{B(H_1)} p \leq p. 
$$
Since $\tau_1$ is faithful, it follows that $p \,  \vartheta(r(p)) = p$, thus $p \vartheta(r(p))p = p$. Let $q \in \mathcal{A}$, using formula (\ref{taufunctor}) we get
$$
\tau_1\Big(q \, \vartheta\big( r(p)\big) q\Big) = \tau_1\Big(q \, \vartheta\big( r(p)\big)\Big) = \tau_2\Big(r(q)  r(p)\Big) = \tau_2(r(p \wedge q)).
$$
Letting $q \uparrow 1_{\mathcal{M}}$ yields $\tau_1\big(\vartheta(r(p))\big) = \tau_2(r(p))$. Since $\tau_1(p) = \tau_2(r(p))$, it follows that $\tau_1\big(\vartheta(r(p))\big) = \tau_1\big(p \, \vartheta(r(p) p )\big)$. Notice that $p \, \vartheta(r(p) p \leq \vartheta(p)$, then by normality of $\tau_1$ we get that $p \, \vartheta(r(p) p = \vartheta(p)$. It follows that 
$$p = p \, \vartheta(p) p  = \vartheta(p),$$ 
and proves statement \ref{functorcoreproj}.

To prove \ref{functormult}, fix $p \in \mathcal{A}$, $y,z \in \mathcal{N}_{r(\mathcal{A})}^+$ and let $\psi: \mathcal{N}_{r(A)} \to L^{\infty}(Y,\Sigma_2,\theta_2)$ be the trace preserving $*$-algebra isomorphism provided by Theorem \ref{abeliano}. Define the maps $\zeta,\eta: \Sigma_2 \to [0,\infty]$ as follows
$$
\zeta(E) = \tau_1\big(p \, \vartheta(y) \vartheta(\psi^{-1}(\chi_{E})) \big) \quad \text{and} \quad  \eta(E) = \tau_1\big(p \, \vartheta(y \, \psi^{-1}(\chi_{E})) \big),
$$
hold for all $E \in \Sigma_2$.

Since $\|\psi^{-1}(\chi_E)\|_{B(H_2)} \leq 1$, by \ref{functorlinf}, and the fact that $p$,$\vartheta(y)$,$\vartheta(\psi^{-1}(\chi_E))$, and $\vartheta(y \psi^{-1}(\chi_E))$ commute, we conclude that both functions are bounded above by $\|y\|_{B(H_2)}\tau_1(p) < \infty$, it is clear that $\zeta(\emptyset) = \nu(\emptyset) = 0$. To show that the functions define finite measures, let $E = \cup_{k} E_k$ be a disjoint union in $\Sigma_2$ and notice that $\chi_E = \sup_n \sum_{k=1}^n \chi_{E_k}$. By \ref{functorincreasing} and normality of the trace, we get
\begin{align*}
    \zeta(E) &= \tau_1\big(p \, \vartheta(y) \vartheta(\psi^{-1}(\sup_{n} \sum_{k=1}^n \chi_{E_k})) \big) = \sup_n \sum_{k=1}^n \tau_1\big( p \, \vartheta(y) \vartheta(\psi^{-1}(E_k))\big) \\
    &= \sup_n \sum_{k=1}^n \zeta(E_k) = \sum_{k=1}^\infty \zeta(E_k).
\end{align*}
A similar argument shows that $\eta(E) = \sum_{k=1}^\infty \eta(E_k)$ and shows that both $\zeta$ and $\eta$ are finite measures over $\Sigma_2$.

Consider the collection 
$$
V = \left\{ E \in \Sigma_2: \chi_E = \psi(r(q)) \text{ for some } q \in \mathcal{A} \right\}.
$$
For each $E \in V$, using \ref{functorcoreproj} and the fact that $r$ is order preserving, we have
\begin{align*}
\zeta(E) &= \tau_1\big( p \, \vartheta(y) \vartheta(\psi^{-1}(E))\big) = \tau_1\big( p \, \vartheta(y) \vartheta(\psi^{-1}(\psi(r(q))))\big) \\
&= \tau_1\big( p \, \vartheta(y) \vartheta(r(q)) \big) = \tau_1\big( p \, \vartheta(y) q \big)\\
&= \tau_1\big((p \wedge q) \vartheta(y)\big) = \tau_2\big( r(p \wedge q) y \big),
\end{align*}
and
\begin{align*}
\eta(E) &= \tau_1\big( p \, \vartheta(y \, \psi^{-1}(E))\big) = \tau_1\big( p \, \vartheta(y \, \psi^{-1}(\psi(r(q))))\big)\\
&= \tau_1\big( p \, \vartheta(y \, r(q)) \big)\\
&= \tau_2\big( r(p) y \, r(q) \big) =  \tau_2\big( r(p \wedge q) y \big).
\end{align*}

If $S^+_{r(\mathcal{A})}$ is the semiring provided by Lemma \ref{coremedida} associated to the ordered core $r(\mathcal{A})$, taking differences the measures $\eta$ and $\zeta$ coincide on the semiring $S^+_{r(\mathcal{A})}$, therefore they coincide on the generated $\sigma$-ring. The space $L^{\infty}(Y,\sigma(S^+_{r(\mathcal{A})}),\theta_2)$ is a von Neumann subalgebra of $L^{\infty}(Y,\Sigma_2,\theta_2)$ containing $\psi(r(\mathcal{A}))$, its image by $\psi^{-1}$ is a von Neumann subalgebra of $\mathcal{N}_{r(\mathcal{A})}$ containing $r(\mathcal{A})$, therefore they coincide. This shows that $L^{\infty}(Y,\sigma(S^+_{r(\mathcal{A})}),\theta_2)$ coincides with $L^{\infty}(Y,\Sigma_2,\theta_2)$, thus $\zeta$ and $\eta$ define the same measure over $\Sigma_2$.

Therefore,
$$
\tau_1\big(p \, \vartheta(y) \vartheta(\psi^{-1}(\chi_E))\big) = \tau_1\big(p \, \vartheta(y \, \psi^{-1}(\chi_E))\big), \quad \forall E \in \Sigma_2. 
$$
An application of $\Phi$ yields
$$
\int_{A_p} \Phi(\vartheta(y) \vartheta(\psi^{-1}(\chi_E))) \, d\theta = \int_{A_p} \Phi(\vartheta(y \, \psi^{-1}(\chi_E))) \, d\theta = \rho_{y \psi^{-1}(\chi_E)}(A_p),
$$
here $\chi_{A_p} = \Phi(p)$. By the uniqueness $\theta$-a.e. of the Radon-Nikodym derivative we get $$\Phi(\vartheta(y) \vartheta(\psi^{-1}(\chi_E))) = \Phi(\vartheta(y \, \psi^{-1}(\chi_E))),$$ therefore $\vartheta(y)\vartheta(\psi^{-1}(\chi_E)) = \vartheta(y \, \psi^{-1}(\chi_E))$.

Since $z \in \mathcal{N}_{r(\mathcal{A})}^+$, the nonnegative function $\psi(z)$ is the increasing limit of finite positive linear combinations of characteristic functions of sets in $\Sigma_2$. So by \ref{functorincreasing}, we get 
$$
\vartheta(y)\vartheta(\psi^{-1}(\psi(z))) = \vartheta(y \, \psi^{-1}(\psi(z))),
$$
thus $\vartheta(y)\vartheta(z) = \vartheta(yz)$ and completes the proof of \ref{functormult}.

To prove \ref{functorringiso}, we have already shown that $\vartheta$ is a $*$-preserving linear map; it remains to show that it preserves multiplication and traces. Let $y,z \in \mathcal{N}_{r(\mathcal{A})}$, we can decompose these operators into linear combinations of positive operators as follows 
\begin{align*}
y &= \frac{1}{2}\Big( \big(y + y^*\big)^+ - \big(y + y^*\big)^- \Big) - \frac{i}{2}\Big( \big(iy + (iy)^*\big)^+ - \big(iy + (iy)^*\big)^- \Big) \\
z &= \frac{1}{2}\Big( \big(z + z^*\big)^+ - \big(z + z^*\big)^- \Big) - \frac{i}{2}\Big( \big(iz + (iz)^*\big)^+ - \big(iz + (iz)^*\big)^- \Big).
\end{align*}
For an operator $x$, denote $(x)^{\mathfrak{d}(1)} = (x)^+$ and $(x)^{\mathfrak{d}(-1)} = (x)^{-}$. Linearity of $\vartheta$ and \ref{functormult} yield
\begin{align*}
    \vartheta(yz) &= \frac{1}{4} \sum_{j,k =0}^1 (-1)^{j+k} \vartheta\Big(\big( y+y^* \big)^{\mathfrak{d}(j)}\Big) \vartheta\Big(\big( z+z^* \big)^{\mathfrak{d}(j)} \Big)\\ 
    &+ \frac{1}{4} \sum_{j,k =0}^1 (-1)^{j+k} \vartheta\Big(\big( (iy)+(iy)^* \big)^{\mathfrak{d}(j)}\Big) \vartheta\Big(\big( (iz)+(iz)^* \big)^{\mathfrak{d}(j)} \Big)\\
    &= \vartheta(y)\vartheta(z).
\end{align*}
This shows that $\vartheta$ preserves multiplication. Finally, fix $y \in \mathcal{N}_{r(\mathcal{A})}^+$ and let $p_{n} \uparrow 1_{\mathcal{M}}$ with $\{p_n\} \in \mathcal{A}$, then by \ref{taufunctor} we get
$$
\tau_1(p_n \vartheta(y)) = \tau_2(r(p_n)y), \quad \forall n \in \mathbb{N},
$$
taking limits when $n \to \infty$ yields $\tau_1(\vartheta(y)) = \tau_2(y)$. Linearity of the trace extends this equality for all elements $y \in \mathcal{N}_{r(\mathcal{A})}^+$. Thus, $\vartheta$ preserves traces and completes the proof.

\end{proof}

\section{A measure space tailored to an ordered core}
In this section, we assume that $(\mathcal{M},\tau)$ is a von Neumann algebra with a semifinite normal faithful trace and $\mathcal{A}$ a $\sigma$-bounded full ordered core with maximal extension $\mathcal{A}_M$. The main result of this section is that we construct transition maps between the von Neumann algebra $\mathcal{M}$ and a space $L^{\infty}_\lambda$ for a distinguished Borel measure on the half line. This is proved in Theorem \ref{QRmaps}. 

Our next task is to induce a Borel measure on the half line. To do so, we define the following functions over $[0,\infty)$:
$$
a(s) = \sup  \Big([0,s] \cap \Gamma_{\mathcal{A}_M} \Big) \quad \text{and} \quad b(s) = \inf \Big( [x,\infty) \cap \Gamma_{\mathcal{A}_M} \Big),
$$
where the infimum of the empty set is taken as $+\infty$. It is immediate that $a$ and $b$ are non-decreasing functions, so they are Borel measurable. More properties of these functions are explored in the following proposition.
\begin{prop}\label{abmaps}
Let $\mathcal{A}_M$ be a maximal ordered core, then $a(s) \leq b(s)$. Also, the following are equivalent.
\begin{enumerate}
    \item $a(s) = b(s)$.
    \item $s \in \Gamma_{\mathcal{A}_M}$.
\end{enumerate}
\end{prop}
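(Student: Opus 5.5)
The plan is to use only that $\Gamma := \Gamma_{\mathcal{A}_M}$ is a closed subset of $[0,\infty)$ containing $0$. Here $0 \in \Gamma$ because $0 \in \mathcal{A}_M$ and $\tau(0)=0$, and $\Gamma$ is closed by maximality (Theorem \ref{maximal}). I read the definition of $b$ with $x$ replaced by $s$, so that $b(s) = \inf([s,\infty)\cap\Gamma)$.

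First, $a(s)$ is well defined and lies in $\Gamma$. The set $[0,s]\cap\Gamma$ contains $0$, so it is nonempty, and it is bounded by $s$. It is also compact, being the intersection of a compact set with a closed set, so its supremum is attained and $a(s)\in\Gamma\cap[0,s]$. In particular $a(s)\le s$. Similarly, if $[s,\infty)\cap\Gamma\neq\emptyset$, then it is closed and bounded below, so its infimum is attained, $b(s)\in\Gamma$, and $b(s)\ge s$. If it is empty, then $b(s)=+\infty$, again giving $b(s) \ge s$. In both cases
\[
a(s)\le s\le b(s),
\]
which is the first claim.

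Next come the two implications. If $s\in\Gamma$, then $s\in[0,s]\cap\Gamma$ and $s\in[s,\infty)\cap\Gamma$, so $a(s)\ge s$ and $b(s)\le s$. Combined with the sandwich above, this gives $a(s)=s=b(s)$. Conversely, suppose $a(s)=b(s)$. Since $a(s)$ is finite, so is $b(s)$, and the sandwich forces $a(s)=s=b(s)$. Since $a(s)\in\Gamma$ by the attainment argument, we get $s\in\Gamma$.

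The only delicate point is the attainment of the supremum and infimum. This is exactly where the closedness of $\Gamma$, i.e.\ the maximality of the core, is used, together with $0\in\Gamma$ for nonemptiness. Without maximality the equivalence fails: for the core $\mathcal{A}_1$ of Example \ref{exCommutative}, with $\Gamma_{\mathcal{A}_1}=[0,\infty)\cap\mathbb{Q}$, one has $a(s)=b(s)=s$ for irrational $s\notin\Gamma_{\mathcal{A}_1}$. The rest is order bookkeeping.
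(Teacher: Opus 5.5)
Your proof is correct and follows the same route as the paper: closedness of $\Gamma_{\mathcal{A}_M}$ makes the supremum and infimum attained, which gives $a(s)\le s\le b(s)$ with equality exactly when $s\in\Gamma_{\mathcal{A}_M}$. You are in fact slightly more careful than the paper. It tacitly ignores the case $[s,\infty)\cap\Gamma_{\mathcal{A}_M}=\emptyset$, where $b(s)=+\infty$, and it does not say why $[0,s]\cap\Gamma_{\mathcal{A}_M}$ is nonempty, which you justify by $0\in\Gamma_{\mathcal{A}_M}$.
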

\begin{proof}
    If $\mathcal{A}$ is maximal then $\Gamma_{\mathcal{A}_M}$ is a closed subset of $[0,\infty)$. Thus for each $s \in [0,\infty)$ we have that $a(s) \in [0,x] \cap \Gamma_{\mathcal{A}_M}$ and $b(s) \in [s,\infty) \cap \Gamma_{\mathcal{A}_M}$, hence $a(s) \leq b(s)$ with equality only possible if $a(s) = b(s) = s$ and $s \in \Gamma_{\mathcal{A}_M}$. 
    
\end{proof}

Since $b$ is non-decreasing, it is Borel measurable. We define the measure $\lambda$ as the pushforward measure of the function $b$, that is
\begin{equation}\label{pushforward1}
\lambda(E) = m(b^{-1}(E)), \quad \text{for all } E \subseteq [0,\infty] \text{ Borel measurable},    
\end{equation}
and 
\begin{equation}\label{pushforward2}
\int\limits_{[0,\infty)} \varphi \, d\lambda = \int\limits_{\Gamma_{\mathcal{A}_M}} \varphi \, d\lambda = \int\limits_{[0,\infty]} \varphi \circ b \, dm,
\end{equation}
whenever the function $\varphi \circ b$ is Lebesgue measurable. Here $m$ denotes the Lebesgue measure.

The measure space $([0,\infty),\lambda)$ encodes the monotonicity properties of the ordered core $\mathcal{A}$. We will explore this relationship in the next two propositions. The first proposition induced a map between $\mathcal{M} \to L^{^\infty}_\lambda$, the next one defines a partial inverse. 

\begin{prop}\label{Rmap}
    Let $(\mathcal{M},\tau)$ be a von Neumann algebra with a semifinite normal faithful trace $\tau$ and $\mathcal{A}$ be a $\sigma$-bounded full ordered core. There exists a $*$-preserving positive linear map $R: \mathcal{M} \to L^{\infty}_\lambda$ such that:
     \begin{enumerate}[label=(\roman*)]
        \item \label{Rincreasing} If $\{x_n\}_n \in \mathcal{M}^+$ and $x \in \mathcal{M}^+$ such that $x_n \uparrow x$, then $R(x_n) \uparrow R(x)$ $\lambda$-almost everywhere.
        \item \label{Rintegrals} For each $p \in \mathcal{A}$, $\tau(xp) = \int\limits_{[0,\tau(p)]} R(x) \, d\lambda$, for all $x \in \mathcal{M}^+$.
        \item \label{Rcoresets} For each $p \in \mathcal{A}$, $R(p) = \chi_{[0,\tau(p)]}$.
        \item \label{Rpositive} If $0 \leq x \leq y \in \mathcal{M}^+$, then $0 \leq R(x) \leq R(y)$  $\lambda$-almost everywhere.
        \item \label{Ralgebrahomo} The restriction of $R$ to $\mathcal{M}_{\mathcal{A}}$ is a $*$-algebra homomorphism between $\mathcal{M}_{\mathcal{A}} \to L^{\infty}_{\lambda}$ that is trace preserving. 
        \end{enumerate}
\end{prop}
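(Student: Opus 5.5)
The natural route is to apply Theorem~\ref{functor} with the roles reversed. We take $\mathcal{N}=\mathcal{M}$ with trace $\tau$ as the ``source'' algebra. For the ``target'' we take the abelian von Neumann algebra $L^\infty_\lambda$ acting on $L^2_\lambda$, with trace $\varphi\mapsto\int\varphi\,d\lambda$. The plan is to build a core in $L^\infty_\lambda$, apply the theorem, and then read off each item.

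\textbf{Step 1: the ordered core in $L^\infty_\lambda$.} Set
\[
\mathcal{B}=\{\chi_{[0,t]}: t\in\Gamma_{\mathcal{A}_M}\}.
\]
This is an ordered core: it is totally ordered, it contains $0$ via $\chi_{[0,0]}$ once we check $\lambda(\{0\})=0$ (or we adjoin $0$ directly), and each element has finite trace. Indeed, since $b(s)\ge s$ and $b(s)=s$ on $\Gamma_{\mathcal{A}_M}$, we have for $t\in\Gamma_{\mathcal{A}_M}$
\[
\lambda([0,t]) = m\bigl(b^{-1}[0,t]\bigr) = m([0,t]) = t .
\]
The core is full because $\lambda$ lives on $\Gamma_{\mathcal{A}_M}$ and $\sup\Gamma_{\mathcal{A}_M}=\tau(1)$, by fullness of $\mathcal{A}$. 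It is $\sigma$-bounded because we may use countably many $t$.

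\textbf{Step 2: the core morphism.} Define $r:\mathcal{B}\to P(\mathcal{M})$ by $r(\chi_{[0,t]})=p_t$, where $p_t\in\mathcal{A}_M$ is the unique element with $\tau(p_t)=t$; uniqueness is Proposition~\ref{coreprop1}\ref{faithfulness}. This map is order preserving, and
\[
\tau\bigl(r(\chi_{[0,t]})-r(\chi_{[0,s]})\bigr) = t-s = \lambda\bigl((s,t]\bigr),
\]
so $r$ is a core morphism with $c=1$ that preserves traces.

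\textbf{Step 3: apply Theorem~\ref{functor}.} It yields a positive $*$-preserving linear map $R:=\vartheta:\mathcal{M}\to L^\infty_\lambda$. Each item then follows as below.
\begin{itemize}
\item[(i)] Monotone continuity is Theorem~\ref{functor}\ref{functorincreasing}.
\item[(ii)] Formula (\ref{taufunctor}) with $q=\chi_{[0,\tau(p)]}$ and the zero projection gives $\int_{[0,\tau(p)]}R(x)\,d\lambda=\tau(p x)$. Proposition~\ref{coreprop1}\ref{commutetrace} converts $\tau(px)$ into $\tau(xp)$.
\item[(iii)] This is Theorem~\ref{functor}\ref{functorcoreproj}, since $R(p)=\vartheta(r(\chi_{[0,\tau(p)]}))=\chi_{[0,\tau(p)]}$.
\item[(iv)] This is positivity of $\vartheta$ applied to $y-x$.
\item[(v)] This is Theorem~\ref{functor}\ref{functorringiso}, once we check that $\sup_t r(\chi_{[0,t]})=1_{\mathcal{M}}$, which is fullness of $\mathcal{A}$. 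We also need that $\mathcal{N}_{r(\mathcal{B})}=\mathcal{M}_{\mathcal{A}_M}=\mathcal{M}_{\mathcal{A}}$, which is Proposition~\ref{genabelian}.
\end{itemize}

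\textbf{Main obstacle.} The delicate part is the bookkeeping for the measure $\lambda$. We must confirm that $\lambda$ is carried by $\Gamma_{\mathcal{A}_M}$ and that it gives the right masses, including the atoms at the right endpoints of the gaps of $\Gamma_{\mathcal{A}_M}$ (for instance, $\lambda(\{b\})$ equals the length of the gap ending at $b$). We also have to handle the possible case $\tau(1)<\infty$, where $\Gamma_{\mathcal{A}_M}$ is bounded and $b=+\infty$ beyond its maximum. For the range of $R$ to live in $L^\infty_\lambda$ and not in a larger algebra, the core $\mathcal{B}$ must generate the relevant $\sigma$-algebra modulo $\lambda$-null sets. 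This holds because every gap of $\Gamma_{\mathcal{A}_M}$ carries no $\lambda$-mass except at its right endpoint. If that identification turns out to be awkward, the fallback is to apply Theorem~\ref{functor} with target $L^\infty(\Gamma_{\mathcal{A}_M},\lambda)$ directly.
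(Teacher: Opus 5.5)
Your proposal is correct and follows the paper's proof essentially verbatim. The paper likewise takes the ordered core $\mathcal{B}=\{\chi_{[0,\tau(p)]}:p\in\mathcal{A}\}$ in $L^{\infty}_\lambda([0,\tau(1_{\mathcal{M}})])$ with the trace-preserving core morphism $r(\chi_{[0,\tau(p)]})=p$ (so $c=1$), and reads off (i)--(v) from the corresponding items of Theorem~\ref{functor}. The remaining differences are minor: you index the core by $\Gamma_{\mathcal{A}_M}$ rather than by $\tau(\mathcal{A})$, which is harmless by Proposition~\ref{genabelian}. The paper's restriction to $[0,\tau(1_{\mathcal{M}})]$ is precisely your fix for the case $\tau(1_{\mathcal{M}})<\infty$, and your verification of the masses of $\lambda$ is more explicit than the paper's.
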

\begin{proof}
Consider the abelian von Neumann algebra $L^{\infty}_\lambda([0,\tau(1_{\mathcal{M}})])$ and the ordered core 
$$
\mathcal{B} = \{ \chi_{[0,\tau(p)]}: p \in \mathcal{A} \}.
$$
Define the map $r: \mathcal{B} \to \mathcal{A}$ by $r(\chi_{[0,\tau(p)]}) = p$. Notice that the mapping is well defined since $\tau$ is injective over $\mathcal{A}$. Notice that
$$
\int\limits_{[0,\infty)} \chi_{[0,\tau(p)]} \, d\lambda  = \lambda([0,\tau(p)]) = \tau(p),
$$
Therefore, $r$ is a trace preserving core morphism, and the operator $R$ is the map provided by Theorem \ref{functor}. All the statements \ref{Rincreasing}-\ref{Ralgebrahomo} follow immediately from their corresponding statements on Theorem \ref{functor}. 

\end{proof}

The following proposition studies a partial inverse of $R$.

\begin{prop}\label{Qmap}
    Let $(\mathcal{M},\tau)$ be a von Neumann algebra with a semifinite normal faithful trace $\tau$ and $\mathcal{A}$ be an ordered core. There exists a $*$-algebra homomorphism $Q: L^{\infty}_\lambda \to \mathcal{M}_{\mathcal{A}}$ such that:
    
     \begin{enumerate}[label=(\roman*)]
        \item \label{Qincreasing} If $\{f_n\}_n \in L^{\infty}_{\lambda}$ and $f \in L^+_\lambda$ such that $f_n \uparrow f$, then $Q(f_n) \uparrow Q(f)$.
        \item \label{Qintegrals} For each $p \in \mathcal{A}$, $\tau\big(Q(f)p\big) = \int\limits_{[0,\tau(p)]} f \, d\lambda$, for all $f \in L^+_\lambda$.
        \item \label{Qcoresets} For each $p \in \mathcal{A}$, $p = Q(\chi_{[0,\tau(p)]})$.
        \item \label{Qpositive} If $0 \leq f \leq g$ $\lambda$-almost everywhere, then $0 \leq Q(f) \leq Q(g)$. 
    \end{enumerate}
\end{prop}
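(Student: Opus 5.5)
The plan is to mirror the proof of Proposition \ref{Rmap}, swapping the roles of the two algebras: apply Theorem \ref{functor} to a core morphism going from $\mathcal{A}$ into the projections of $L^{\infty}_\lambda$. Concretely, I take the ordered core $\mathcal{A}$ in $\mathcal{M}$ and define $r:\mathcal{A}\to P(L^{\infty}_\lambda)$ by $r(p)=\chi_{[0,\tau(p)]}$. By Proposition \ref{coreprop1}\ref{faithfulness}, $\tau$ is injective on $\mathcal{A}$, and $p\le q$ forces $\tau(p)\le\tau(q)$; hence $r$ is well defined, $r(0)=\chi_{\{0\}}$, and $r$ is order preserving.

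The core morphism axioms reduce to the identity $\lambda([0,t])=t$ for $t\in\Gamma_{\mathcal{A}_M}$. This holds because $b^{-1}([0,t])=\{s:b(s)\le t\}=[0,t]$ when $t\in\Gamma_{\mathcal{A}_M}$, using Proposition \ref{abmaps} and closedness of $\Gamma_{\mathcal{A}_M}$. The same computation gives $\lambda(\{0\})=0$, so $r(0)=0$ in $L^\infty_\lambda$. It also gives $\lambda\big((\tau(p),\tau(q)]\big)=\tau(q)-\tau(p)=\tau(q-p)$, so $r$ is a trace preserving core morphism with $c=1$.

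Since $\mathcal{A}$ is full and $\sigma$-bounded, we have $\sup_p\tau(p)=\tau(1_{\mathcal{M}})$. Therefore $\sup_p r(p)=\chi_{[0,\tau(1_{\mathcal{M}}))}$ (or $\chi_{[0,\tau(1_{\mathcal{M}})]}$), which equals $1$ in $L^{\infty}_\lambda$ because $\lambda$ is carried by $\Gamma_{\mathcal{A}_M}\subseteq[0,\tau(1)]$.

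Theorem \ref{functor} now yields a positive $*$-preserving linear map $Q:=\vartheta:L^{\infty}_\lambda\to\mathcal{M}$. Its values lie in $\mathcal{M}_{\mathcal{A}}$ by construction, since $\vartheta(y)=\Phi^{-1}(h_y)$. The properties then follow from the theorem:
\begin{itemize}
\item \ref{Qincreasing} is Theorem \ref{functor}\ref{functorincreasing}.
\item \ref{Qintegrals} follows from \eqref{taufunctor} with the pair $0\le p$: $\tau(pQ(f))=\int\chi_{[0,\tau(p)]}f\,d\lambda$. For unbounded $f\in L^+_\lambda$, approximate by $f\wedge n$ and use \ref{Qincreasing} together with normality.
\item \ref{Qcoresets} is Theorem \ref{functor}\ref{functorcoreproj}.
\item \ref{Qpositive} follows from positivity and linearity.
\end{itemize}

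The main obstacle is showing that $Q$ is multiplicative on all of $L^\infty_\lambda$. Theorem \ref{functor}\ref{functorringiso} gives this only on the von Neumann algebra $\mathcal{N}_{r(\mathcal{A})}$ generated by $\{\chi_{[0,t]}:t\in\Gamma_{\mathcal{A}}\}$. I must therefore show that this algebra is all of $L^\infty_\lambda$.

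The $\sigma$-algebra generated by the intervals $[0,t]$, $t\in\Gamma_{\mathcal{A}}$, contains $[0,t]$ for every $t\in\overline{\Gamma_{\mathcal{A}}}=\Gamma_{\mathcal{A}_M}$, by taking countable monotone limits. It therefore contains every set $(t,t']\cap\Gamma_{\mathcal{A}_M}$. Since $\lambda$ lives on $\Gamma_{\mathcal{A}_M}$, every Borel set agrees $\lambda$-a.e. with a set in this $\sigma$-algebra. For the gaps of $\Gamma_{\mathcal{A}_M}$, I would check that $\lambda$ puts an atom at the right endpoint $b(s)$ and no mass inside the gap.

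Granting this, $\mathcal{N}_{r(\mathcal{A})}=L^\infty_\lambda$, and \ref{functorringiso} makes $Q$ a trace preserving $*$-homomorphism. If the identification of the generated $\sigma$-algebra fails in a degenerate case, my fallback is to verify multiplicativity directly via \ref{functormult} on positive elements plus the decomposition into positive parts, as in the proof of \ref{functorringiso}.
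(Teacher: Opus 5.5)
Your proposal is correct and follows the paper's proof. The paper likewise takes $r(p)=\chi_{[0,\tau(p)]}$, notes that $\lambda([0,\tau(p)])=\tau(p)$ makes $r$ a trace-preserving core morphism, applies Theorem \ref{functor}, and identifies the resulting domain $L^\infty([0,\infty),\sigma(\mathcal{B}),\lambda)$ with $L^\infty_\lambda$ because the complement of $\Gamma_{\mathcal{A}_M}$ is $\lambda$-null. Your checks of $b^{-1}([0,t])=[0,t]$ and of the generated $\sigma$-algebra just make explicit what the paper leaves implicit. One small correction there: for $t$ approached only from below you obtain $[0,t)$ rather than $[0,t]$, which is harmless since $\lambda(\{t\})=0$ in that case.
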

\begin{proof}
Consider the ordered core 
$$
\mathcal{B} = \{ \chi_{[0,\tau(p)]}: p \in \mathcal{A} \}.
$$
Define the map $r: \mathcal{A} \to \mathcal{B}$ by $r(p) = \chi_{[0,\tau(p)]}$. Notice that
$$
\int\limits_{[0,\infty)} \chi_{[0,\tau(p)]} \, d\lambda  = \lambda([0,\tau(p)]) = \tau(p),
$$
Therefore, $r$ is a trace preserving core morphism, hence Theorem \ref{functor} provides an operator Q defined from the measure space $L^{\infty}([0,\infty],\sigma(\mathcal{B}),\lambda) \to \mathcal{M}_{\mathcal{A}}$. Since the complement $\Gamma_{\mathcal{A}}$ has zero $\lambda$-measure, we conclude that the domain of $Q$ is $L^{\infty}_\lambda$. All the statements \ref{Qincreasing}-\ref{Qpositive} follow immediately from their corresponding statements on Theorem \ref{functor}. 
    
\end{proof}

The close connection between the maps $R$ and $Q$ is shown in the following theorem.
\begin{thm}\label{QRmaps}
    Let $(\mathcal{M},\tau)$ be a von Neumann algebra with a semifinite normal faithful trace $\tau$, $\mathcal{A}$ be a $\sigma$-bounded full ordered core, and the maps $R,Q$ given by Propositions \ref{Rmap} and \ref{Qmap}. Then:
        \begin{enumerate}[label=(\roman*)]
        \item \label{RQinverse} The equality $RQ(f) = f$ holds $\lambda$-almost everywhere, for all $f \in (L^{\infty}_\lambda)^+$.
        \item \label{QRinverse} The equality $QR(x) = x$ holds, for all $x \in (\mathcal{M}_{\mathcal{A}})^+$.
        \end{enumerate}

        In addition,  $R$ and $Q$ extend uniquely to a normal $*$-algebra isomorphism between $S(\tau_{\mathcal{A}}) \to S(\lambda)$. Here $S(\tau_{\mathcal{M}_\mathcal{A}})$ denotes the $\tau$-measurable operators affiliated with $\mathcal{M}_\mathcal{A}$ and 
$$
S(\lambda) = \{ f \in L^{0}_\lambda: \exists E \in [0,\infty) \text{ such that } (1-\chi_E) f  \in L^{\infty}_\lambda \text{ and } \lambda(E) < \infty \}.
$$
Moreover, 
        \begin{enumerate}[label=(\roman*)]\addtocounter{enumi}{2}
        \item \label{QRfunctor} For all $x \in S(\tau_{\mathcal{M}_{\mathcal{A}}})^+$ and $f \in S(\lambda)^+$ the formulas
        \begin{equation}\label{extfunctor}
            \tau\big(x p\big) = \int\limits_{[0,\tau(p)]} R(x) \, d\lambda \quad \text{and} \quad \tau\big(Q(f) p\big) = \int\limits_{[0,\tau(p)]} f \, d\lambda
        \end{equation}
        hold for all $p \in \mathcal{A}$.
        \item \label{Rcoredec} If $x \in \mathcal{M}^{\downarrow}_{\mathcal{A}}$, then $Rx$ is a $\lambda$-measurable decreasing function.
        \item \label{Qcoredec} If $g \in S(\lambda)^+ $ is a decreasing function, then $g = R(x)$ for a unique $x \in \mathcal{M}^{\downarrow}_{\mathcal{A}}$.
        \item \label{Rrearrangement} For each $x \in S(\tau_{\mathcal{M}_\mathcal{A}})$, $\mu(t;x) = \mu(t;R(x))$. Here, the rearrangement of $R(x)$ is taken with respect to the measure $\lambda$.
        \item \label{Qrearrangement} For each $f \in S(\lambda)$, $\mu(t;Qf) = \mu(t;f)$. Here, the rearrangement of $f$ is taken with respect to the measure $\lambda$.
    \end{enumerate}
\end{thm}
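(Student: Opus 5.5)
The plan is to reduce everything to the two integral identities of Propositions \ref{Rmap}\ref{Rintegrals} and \ref{Qmap}\ref{Qintegrals}, combined with the uniqueness statement of Lemma \ref{lemaigualador} and its commutative counterpart for $L^\infty_\lambda$.

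\emph{Items \ref{RQinverse} and \ref{QRinverse}.} Fix $f \in (L^\infty_\lambda)^+$. For each $p \in \mathcal{A}$ we compute
$$\int_{[0,\tau(p)]} RQ(f)\,d\lambda = \tau\big(Q(f)p\big) = \int_{[0,\tau(p)]} f\,d\lambda.$$
The sets $[0,\tau(p)]$ generate the semiring of Lemma \ref{coremedida} for the core $\mathcal{B}$. The two measures $f\,d\lambda$ and $RQ(f)\,d\lambda$ are $\sigma$-finite, since $\mathcal{A}$ is $\sigma$-bounded, and they agree on that semiring. Hence they agree on $\sigma(\mathcal{B})$, and this $\sigma$-algebra carries $L^\infty_\lambda$ because $\lambda$ is concentrated on $\Gamma_{\mathcal{A}_M}$. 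Radon--Nikodym uniqueness then gives $RQ(f)=f$ $\lambda$-a.e. For \ref{QRinverse}, take $x \in \mathcal{M}_{\mathcal{A}}^+$ and write
$$\tau\big(QR(x)p\big)=\int_{[0,\tau(p)]}R(x)\,d\lambda=\tau(xp)\quad\text{for all } p\in\mathcal{A}.$$
Lemma \ref{lemaigualador} then yields $QR(x)=x$. Linearity extends both identities to all of $L^\infty_\lambda$ and $\mathcal{M}_{\mathcal{A}}$. In particular $R|_{\mathcal{M}_{\mathcal{A}}}$ and $Q$ are mutually inverse, trace-preserving, normal $*$-isomorphisms, using Proposition \ref{Rmap}\ref{Ralgebrahomo}.

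\emph{Extension to measurable operators.} A trace-preserving normal $*$-isomorphism of semifinite von Neumann algebras maps projections to projections with the same trace, and therefore preserves $\tau$-measurability. I would define $R$ on $x \in S(\tau_{\mathcal{M}_\mathcal{A}})^+$ by $R(x)=\sup_n R(\min(x,n))$, where $\min(x,n)$ is understood via spectral calculus. Equivalently, I would transport the spectral measure, setting $e^{R(x)}(B)=R(e^x(B))$. I would do the same for $Q$, and then extend to arbitrary elements through the decomposition into real and imaginary parts and then into positive and negative parts. To check that $R(x)\in S(\lambda)$, note that $\tau(e^{|x|}(s,\infty))<\infty$ for large $s$, so $R$ of this projection is $\chi_E$ with $\lambda(E)<\infty$. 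Uniqueness follows because $\mathcal{M}_{\mathcal{A}}$ is dense in the measure topology and the map is continuous, since it preserves distribution functions.

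The same transport of spectral projections gives $d(s;x)=\tau(e^{|x|}(s,\infty))=\lambda(\{|R(x)|>s\})$. This proves items \ref{Rrearrangement} and \ref{Qrearrangement}. Item \ref{QRfunctor} follows from the bounded case via monotone convergence: apply it to $\min(x,n)\uparrow x$, using Proposition \ref{Rmap}\ref{Rincreasing}, normality of $\tau$ (after noting $\tau(xp)=\tau(pxp)$ by Proposition \ref{coreprop1}\ref{orderpcore}), and monotone convergence for $\lambda$.

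\emph{Core decreasing items.} For \ref{Rcoredec}, if $x_n=\sum_k c_{n,k}p_{n,k}\uparrow x$, then Proposition \ref{Rmap}\ref{Rcoresets} gives $R(x_n)=\sum_k c_{n,k}\chi_{[0,\tau(p_{n,k})]}$, which is decreasing. The increasing limit of decreasing functions is again decreasing, and this extends to unbounded $x$ through the normal extension. For \ref{Qcoredec}, a decreasing $g\in S(\lambda)^+$ is the increasing limit of simple decreasing functions $g_n=\sum c_{n,k}\chi_{[0,t_{n,k}]}$, built from the level sets $\{g>j2^{-n}\}$, which are initial intervals. The main obstacle is to arrange $t_{n,k}\in\Gamma_{\mathcal{A}_M}$. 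Since $\lambda$ lives on the closed set $\Gamma_{\mathcal{A}_M}$, I would replace each $[0,t]$ by $[0,a(t)]$ or $[0,b(t)]$ without changing it $\lambda$-a.e. Here the pushforward structure via $b$ and Proposition \ref{abmaps} are used, and the endpoint mass must be handled carefully. Then $Q(g_n)=\sum c_{n,k}p_{n,k}$ by Proposition \ref{Qmap}\ref{Qcoresets}, so $x=Q(g)$ is core decreasing with $R(x)=g$. Uniqueness follows from the injectivity of $R$ on $S(\tau_{\mathcal{M}_\mathcal{A}})$, noting that core decreasing elements lie in $\mathcal{M}_{\mathcal{A}_M}$, which equals $\mathcal{M}_{\mathcal{A}}$ by Proposition \ref{genabelian}.
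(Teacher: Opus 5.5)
Your proposal is correct and follows the paper's proof. Items (i) and (ii) come from the integral identities of Propositions~\ref{Rmap} and~\ref{Qmap} together with Lemma~\ref{lemaigualador}, or Radon--Nikodym uniqueness on $\sigma(\mathcal{B})$. The extension and items (vi)--(vii) come from $R|_{\mathcal{M}_{\mathcal{A}}}$ being a trace-preserving normal $*$-isomorphism; you carry this out by transporting spectral projections, where the paper simply cites \cite[Propositions~2.9.2 and~3.3.10]{dodsbook}. Items (iii)--(v) come by monotone approximation with core step functions.

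Two details in (v) need fixing. First, endpoints must always be rounded down into $\Gamma_{\mathcal{A}_M}$: for a closed level set $[0,t]$ use $[0,a(t)]$, never $[0,b(t)]$. The reason is that for $t$ in a gap $(c,d)$ of $\Gamma_{\mathcal{A}_M}$ the measure $\lambda$ has an atom of mass $d-c$ at $b(t)=d$. Second, a level set equal to all of $[0,\infty)$ should be truncated by $\chi_{[0,\tau(p_n)]}$, with $p_n\uparrow 1_{\mathcal{M}}$ in $\mathcal{A}$.
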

\begin{proof}
Fix $f \in L^{+}_\lambda$ and $p \in \mathcal{A}$. By Proposition \ref{Qmap}.\ref{Qintegrals} and Proposition \ref{Rmap}.\ref{Rintegrals} we get
$$
\int\limits_{[0,\tau(p)]} RQ(f) \, d\lambda = \tau\big((Qf) p \big) = \int\limits_{[0,\tau(p)]} f \, d\lambda. 
$$
Since $f$ and $RQ(f)$ are $\sigma(\mathcal{B})$-measurable, here $\mathcal{B} = \{ \chi_{[0,\tau(p)]} : p \in \mathcal{A} \}$, we conclude that $f = RQ(f)$ and proves \ref{RQinverse}.

Fix $x \in \mathcal{M}_{\mathcal{A}}^+$ and $p \in \mathcal{A}$. By Proposition \ref{Qmap}.\ref{Qintegrals} and Proposition \ref{Rmap}.\ref{Rintegrals} we get
$$
\tau\big(QR(x)p\big) =
\int\limits_{[0,\tau(p)]} R(x) \, d\lambda = \tau\big(x p \big), \forall p \in \mathcal{A}. 
$$
An application of Lemma \ref{lemaigualador} shows that $QR(x) = x$ and proves \ref{QRinverse}. 

Since $RQ$ and $QR$ coincide with the identity maps on positive elements and $Q,R$ are $*$-algebra homomorphisms, it follows that $R$ and $Q$ are trace preserving $*$-algebra isomorphisms and hence normal. By \cite[Proposition~2.9.2]{dodsbook} the map $R$ extends to a unique $*$-algebra trace preserving isomorphism from $S(\tau_{\mathcal{M}_\mathcal{A}}) \to S(\lambda)$ and $Q$ extends uniquely to its inverse. 

To prove \ref{QRfunctor}, fix $x \in S(\tau_{\mathcal{M}_\mathcal{A}})$ and $f \in S(\lambda)^+$. Choose sequences $\{x_n\} \in M^+$ and $\{f_n\} \in (L^{\infty}_\lambda)^+$ such that $x_n \uparrow x$ and $f_n \uparrow f$. Then \ref{QRfunctor} follows from normality of the traces, Proposition \ref{Rmap}.\ref{Rintegrals}, and Proposition \ref{Qmap}.\ref{Qintegrals}. 

By definition of $\mathcal{M}^\downarrow_{\mathcal{A}}$, if $x \in \mathcal{M}^\downarrow_{\mathcal{A}}$, there exists a sequence of $\{x_n\} \in \mathcal{M}$ such that 
$$
x_n = \sum_{k=1}^{m_{n}} \alpha_{n,k} p_{n,k},
$$
for some $p_{n,k}$ and $\alpha_{n,k} \in [0,\infty)$. By Proposition \ref{Rmap}.\ref{Rcoresets} we get
$$
R(x_n) = \sum_{k=1}^{m_{n}} \alpha_{n,k} R(p_{n,k}).
$$
The normality of $R$ implies that $R(x_n) \uparrow R(x)$. Since $R(x_n)$ is decreasing, it follows that $R(x)$ is decreasing and proves statement \ref{Rcoredec}.

Similarly, if $f \in S(\lambda)^+$ and is decreasing, there exists a sequence $\{f_n\}$ of functions of the form
$$
f_n = \sum_{k=1}^{m_{n}} \alpha_{n,k} \chi_{[0,\tau(p_{n,k})]},
$$
for some $p_{n,k}$ and $\alpha_{n,k} \in [0,\infty)$. By Proposition \ref{Qmap}.\ref{Qcoresets} and normality of $Q$, we get $Q(f_n) \uparrow Q(x) \in \mathcal{M}^{\downarrow}_{\mathcal{A}}$ and proves statement \ref{Qcoredec}.

Statements \ref{Rrearrangement} and \ref{Qrearrangement} follow immediately from \cite[Proposition~3.3.10]{dodsbook}.
    
\end{proof}

An immediate consequence of the previous theorem is that core decreasing operations are in bijection with decreasing nonnegative functions in $S(\lambda)$, finite almost everywhere.

We conclude this section with concrete examples of operators $ R$ and $ Q$. We start with an ordered core for a commutative von Neumann algebra, this example shows that the map $R$ need not preserve products.

\begin{ex}\label{exCommutative2}
    Following Example \ref{exCommutative}, consider the measure space $(\mathbb{R}^2,\Sigma,\theta)$ with $\Sigma$ the Borel $\sigma$-algebra, $\theta$ the Lebesgue measure, and the trace given by the Lebesgue integral. Let $\mathcal{A}$ be the core induced by the family of closed balls $B_r = \{ s \in \mathbb{R}^2: \|s\|_{2} \leq r \}$ for $r > 0$. Since $\Gamma_{\mathcal{A}} = [0,\infty)$, it follows that $a(s) = s = b(s)$, so the measure $\lambda$ is the Lebesgue measure on $[0,\infty)$.
    
    By construction of the maps $R$ and $Q$, they are completely determined by their action on positive elements. Fix $\varphi \in L^{\infty}_\lambda$, and define $g_{\varphi}(s) = \varphi(\pi \|s\|^2_{2}) $ for each $s \in \mathbb{R}^2$. Integration in polar coordinates yields 
    $$
\int_{B(r)} g_\varphi \, d\theta = \int_{0}^r \int_{0}^{2 \pi} \varphi(\pi \rho^2) \rho \, d\theta \, d\rho = \int_{0}^{r} \varphi(\pi \rho^2) \, 2\pi \rho \, d\rho = \int_{0}^{\theta(B_r)} \varphi \, d\lambda.  
    $$
    Since $g_\varphi \in \mathcal{M}_{\mathcal{A}}$ and its integral coincides with the integral of $Q\varphi$ over all sets $B_r$, by Lemma \ref{lemaigualador} we conclude that $Q \varphi = g_{\varphi}$. Since $\varphi$ was an arbitrary positive function, we conclude that the operator $Q$ is given by the formula $Q\varphi(s) = \varphi(\pi \|s\|_{2}^2)$.

    The construction of $R$ makes it unlikely to find an explicit formula for all functions in $L^{\infty}_\theta$, however, we show an example for a particular function. Let 
    $$
W = \{ \big(\rho \cos(\theta), \rho \sin(\theta)\big): 0 < \rho < \theta, \theta \in [0,\pi] \},
    $$
    and set $f = \chi_{W}$. For each $r > 0$, integration in polar coordinates yields
    \begin{align*}
\int_{B_{r}} f \, d\theta &= \min\Big( \pi^3/3, r^3/6 + r^2(\pi - r)/2 \Big) \\
&= \int_{0}^{\theta(B_r)} \Big( \frac{1 - t^{1/2} \pi^{-3/2}}{2}  \Big) \chi_{(0,\pi^3)}(t) \, dt. 
    \end{align*}
    Therefore, $Rf(t) = \frac{1}{2}(1 - t^{1/2} \pi^{-3/2})\chi_{(0,\pi^3)}(t)$. This shows that $R$ (acting on the whole algebra $\mathcal{M}$) needs not preserve idempotent elements, so it does not preserve products.
\end{ex}

The next example shows an explicit formula for both operators $R$ and $Q$.

\begin{ex}\label{exHilbert2}
    Consider the ordered core from Example \ref{exHilbert}. Since $\Gamma_{\mathcal{A}} = \mathbb{N}$, it follows that $a(s) = \lfloor s \rfloor$ and $b(s) = \lceil s \rceil$. 
    
    The induced measure $\lambda$ may be identified with the counting measure over $\mathbb{N}$. 
    
    Fix $x \in B(H)^+$, a straightforward computation shows that
    $$
\tau(x p_n) = \sum_{k=1}^{n} \langle x e_k,e_k \rangle = \int\limits_{[0,n]} \psi_x \, d\lambda,
    $$
    here $\psi_x(s) = \langle x e_{\lceil s \rceil}, e_{\lceil s \rceil}\rangle$. By Lemma \ref{lemaigualador} we get that $Rx = \psi_x$. Since the map $x \mapsto (s \mapsto \langle x e_{\lceil s \rceil}, e_{\lceil s \rceil}\rangle)$ is linear and coincides with $R$ on positive operators, this closed formula extends to all operators $x \in \mathcal{M}$.

A direct computation shows that the mapping $T : L^{\infty}_\lambda \to \mathcal{M}_{\mathcal{A}}$, given by the formula
$$
T \varphi(\xi) = \sum_{k=1}^\infty \varphi(k) \langle \xi , e_k \rangle e_k,
$$
satisfies $RT \varphi = \varphi$ $\lambda$-almost everywhere, since $R$ restricts to a $*$-isomorphism on $\mathcal{M}_{\mathcal{A}}$, it follows that $Q = T$. 
\end{ex}

\section{Non-commutative Down Spaces}
Let $S({\mathcal{M}})$ be the set of $\mathcal{M}$-measurable operators associated to the von Neumann algebra $\mathcal{M}$ equipped with a semifinite normal faithful trace $\tau$ and a full $\sigma$-bounded ordered core $\mathcal{A}$. 

In this section, we give our extension of the level function to a noncommutative setting. This is done in Definition \ref{deflevel}. Using this analogue for the level function, we introduce our down spaces and prove that they are a normed space in Theorem \ref{downisnormed}.

We will focus on a subcollection of $S(\mathcal{M})$ taking the role of locally integrable functions.

\begin{defn}\label{L1loc}
    We say that an operator $x \in S(\mathcal{M})$ belongs to the space $L^{1}_{\mathcal{A},\text{Loc}}(\tau)$ if
    $$
\tau(\left|xp\right|) < \infty, \quad \text{ and } \tau(\left|px\right|) < \infty, \quad \text{for all} \in \mathcal{A}.
    $$
\end{defn}

Notice that any $p \in \mathcal{A}$ has finite trace, therefore, the operators $xp$ and $px$ belong to $S_0(\tau)$. Hence, the traces are well defined. Some basic properties of this space are collected.
\begin{prop}\label{l1loc}
    For a von Neumann algebra $\mathcal{M}$ equipped with a semifinite normal faithful trace $\tau$ and a $\sigma$-bounded full ordered core $\mathcal{A}$. Let $L^{1}_{\mathcal{A},\text{Loc}}$ be defined as above. Then $L^{1}_{\mathcal{A},\text{Loc}}$ is a vector space over $\mathbb{C}$ containing $\mathcal{M}$ satisfying:
    \begin{itemize}
        \item  $x \in L^{1}_{\mathcal{A},\text{Loc}}$  if and only if $x^* \in L^{1}_{\mathcal{A},\text{Loc}}$.
        \item If $x \in L^{1}_{\mathcal{A},\text{Loc}}$, then $|x| \in L^{1}_{\mathcal{A},\text{Loc}}$.
        \item If $|x| \leq |y|$ and $|y| \in L^{1}_{\mathcal{A},\text{Loc}}$, then $|x| \in L^{1}_{\mathcal{A},\text{Loc}}$.
    \end{itemize}  
\end{prop}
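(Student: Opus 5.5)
The plan is to reduce everything to two standard facts about $\tau$-measurable operators: the singular value inequalities $\mu(s+t; y+z) \leq \mu(s;y) + \mu(t;z)$ (giving $\tau(|y+z|) \leq \tau(|y|) + \tau(|z|)$ for $y,z \in S(\tau)$), and the identity $\mu(\cdot\,;w) = \mu(\cdot\,;w^*)$. Throughout, fix $p \in \mathcal{A}$. Since $\tau(p) < \infty$, the operators $xp$ and $px$ lie in $S(\tau)$ for $x \in S(\mathcal{M})$, as already remarked after Definition \ref{L1loc}. Hence all quantities $\tau(|xp|)$ and $\tau(|px|)$ are well defined through the extended trace, and $\tau(|w|) = \int_0^\infty \mu(\cdot\,;w)$.

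\textbf{Vector space and $\mathcal{M} \subseteq L^1_{\mathcal{A},\text{Loc}}$.} For $\alpha \in \mathbb{C}$, we have $|\alpha x p| = |\alpha|\,|xp|$. For sums, $(x+y)p = xp + yp$ in $S(\tau)$, so the triangle inequality for the $L^1(\tau)$-norm gives $\tau(|(x+y)p|) \leq \tau(|xp|) + \tau(|yp|) < \infty$; the same argument applies to $p(x+y)$. For $x \in \mathcal{M}$, I will use $\|xp\|_{L^1(\tau)} \leq \|x\|_{B(H)}\,\tau(p)$, which is the ideal property $\|uyv\|_E \leq \|u\|\,\|v\|\,\|y\|_E$ applied with $E = L^1(\tau)$ and $y = p$. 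The same bound holds for $px$.

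\textbf{Adjoint.} We have $(xp)^* = p x^*$ and $(px)^* = x^* p$. Since $\|w\|_{L^1(\tau)} = \|w^*\|_{L^1(\tau)}$, as recalled for strongly symmetric spaces, it follows that $\tau(|px^*|) = \tau(|xp|)$ and $\tau(|x^*p|) = \tau(|px|)$. The two defining conditions for $x^*$ are therefore exactly the two conditions for $x$ with the roles swapped, so $x \in L^1_{\mathcal{A},\text{Loc}}$ if and only if $x^* \in L^1_{\mathcal{A},\text{Loc}}$.

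\textbf{Modulus.} Write the polar decomposition $x = v|x|$, so $|x| = v^*x$. Then $|x|p = v^*(xp)$, and $\tau(||x|p|) \leq \|v^*\|\,\tau(|xp|) \leq \tau(|xp|)$. The other condition needs more care. Since $|x|$ is self-adjoint, $p|x| = (|x|p)^*$, so its trace norm equals that of $|x|p$ and is finite as well. This is the trick that avoids needing any control of $px$; in fact only the condition on $xp$ is used here.

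\textbf{Domination.} Suppose $0 \leq |x| \leq |y|$ and $|y| \in L^1_{\mathcal{A},\text{Loc}}$. I intend to use $\tau(||x|p|) = \tau((p|x|^2p)^{1/2})$, and the main obstacle is that $|x| \leq |y|$ does not imply $|x|^2 \leq |y|^2$. So instead I will use the factorisation $|x|^{1/2} = u|y|^{1/2}$ with a contraction $u \in \mathcal{M}$; this is the standard Douglas-type lemma for $\tau$-measurable operators, valid because $|x| \leq |y|$. Then
$$|x|p = |x|^{1/2}u|y|^{1/2}p,$$
and I will bound its trace norm by Hölder's inequality in $L^2(\tau)$:
$$\tau(||x|p|) \leq \||x|^{1/2}\|_{?}\cdots$$
This does not localise well, so the cleaner route is through the positive operator $p|x|p \leq p|y|p$. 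Note that $\tau(p|y|p) = \tau(|y|p)$ by Proposition \ref{coreprop1}\ref{orderpcore}, and this is at most $\tau(||y|p|)$. Hence $\tau(p|x|p) < \infty$, so $|x|^{1/2}p \in L^2(\tau)$. The same reasoning gives $|y|^{1/2}p \in L^2(\tau)$. Writing $|x|p = |x|^{1/2}(|x|^{1/2}p)$ still requires $|x|^{1/2}$ to be square-integrable on the left, so finally I will instead use
$$\tau(||x|p|) \leq \tau(||y|p|)$$
obtained from the submajorisation $\mu(|x|p) \prec\prec \mu(|y|p)$. This is the step I expect to be hardest: I would try proving it via $|x|p = u|y|^{1/2}\cdot |y|^{1/2}p$ after replacing the factorisation by $|x| = |y|^{1/2}w|y|^{1/2}$ with $0 \leq w \leq 1$, and then comparing with $|y|p = |y|^{1/2}\cdot|y|^{1/2}p$ using the ideal property on the left factor. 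Once $|x|p$ is handled, $p|x|$ follows by taking adjoints as in the modulus step.
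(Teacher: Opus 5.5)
Your treatment of the vector-space structure, of $\mathcal{M}\subseteq L^{1}_{\mathcal{A},\text{Loc}}$, of adjoints and of $|x|$ is correct and is essentially the paper's argument. The paper gets subadditivity from the polar decomposition rather than the $L^1(\tau)$ triangle inequality, and handles the modulus through the identity $\left||x|p\right|=|xp|$ rather than $|x|p=v^*xp$. You also make explicit the step $p|x|=(|x|p)^*$, which the paper leaves implicit.

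The gap is in the domination bullet. The inequality $\tau(\left||x|p\right|)\le\tau(\left||y|p\right|)$, and hence the submajorisation you want, is false, and it fails even up to a constant. Take $\mathcal{M}=M_2(\mathbb{C})$ with the core $\{0,E_{11},1\}$, and recall that for a rank-one projection $p$ onto a unit vector $e$ one has $\tau(|zp|)=\|ze\|$. Put $g=(1,\sqrt m)^T$, $h=(1,-\sqrt m)^T$, $a=\tfrac12 gg^*$ and $b=\tfrac12(gg^*+hh^*)=\mathrm{diag}(1,m)$. Then $0\le a\le b$. However $\tau(|bE_{11}|)=\|be_1\|=1$, while $\tau(|aE_{11}|)=\|ae_1\|=\tfrac12\sqrt{1+m}$, which is unbounded in $m$.

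The factorisation $|x|=|y|^{1/2}w|y|^{1/2}$ cannot repair this. The contraction $w$ sits between the two factors, and the ideal property only removes contractions from the outside. What you do obtain legitimately, $\tau(p|x|p)\le\tau(p|y|p)$, controls $|x|^{1/2}p$ in $L^2(\tau)$ but not $|x|p$ in $L^1(\tau)$.

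Worse, the bullet itself fails for the usual order of $S(\mathcal{M})$, so no argument can close the gap. Glue the example fibrewise:
\begin{itemize}
\item take $\mathcal{M}=L^\infty[0,1]\bar\otimes M_2(\mathbb{C})$ with $\tau=\int_0^1\mathrm{tr}$;
\item take the full $\sigma$-bounded core $\{0\}\cup\{P_n\}$, where $P_n(t)=1$ for $t\le 1-1/n$ and $P_n(t)=E_{11}$ for $t>1-1/n$;
\item take $a,b$ as above with $m=m(t)=(1-t)^{-2}$.
\end{itemize}
Then $\tau(|bP_n|)=\tau(|P_nb|)=\int_0^{1-1/n}(1+m(t))\,dt+1/n<\infty$, so $b\in L^{1}_{\mathcal{A},\text{Loc}}$. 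On the other hand $\tau(|aP_n|)\ge\tfrac12\int_{1-1/n}^1(1-t)^{-1}\,dt=\infty$ for every $n$.

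The paper's own proof does not address this bullet at all; it ends after the modulus. The bullet can only be saved by taking literally the Section 2 definition of $x\le y$, which includes $\mathfrak{D}(y)\subseteq\mathfrak{D}(x)$; this condition fails for the $a,b$ above. Under that reading, the closed graph theorem gives $\||x|\xi\|\le C(\|\xi\|+\||y|\xi\|)$ on $\mathfrak{D}(|y|)$, hence $p|x|^2p\le 2C^2(p+p|y|^2p)$. Operator monotonicity of the square root, together with $(p+z)^{1/2}\le p+z^{1/2}$ for $0\le z=pzp$, then yields $\tau(\left||x|p\right|)\le\sqrt2\,C\,\big(\tau(p)+\tau(\left||y|p\right|)\big)$. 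This is a bound with a constant and an additive $\tau(p)$ term, not the comparison you proposed.
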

\begin{proof}
    Let $x,y \in L^{1}_{\mathcal{A},\text{Loc}}$, $\alpha \in \mathbb{C}$ then for each $p \in \mathcal{A}$ 
    $$
\tau(\left| \alpha xp \right|) = \left| \alpha \right| \tau(\left|xp\right|) < \infty \quad \text{ and} \quad \tau(\left|p\alpha x \right|) = \left| \alpha \right| \tau(\left|px \right|) < \infty,
    $$
    thus $\alpha x \in L^{1}_{\mathcal{A},\text{Loc}}$. By the polar decomposition, there exist partial isometries $u,v \in \mathcal{M}$ such that
    \begin{equation}\label{subaditive1}
\tau(\left| (x + y)p \right|) = \tau(u^*(x + y)p) = \tau(u^*xp) + \tau(u^*yp) \leq \tau(\left| xp\right|) + \tau(\left|yp\right|) < \infty.  
    \end{equation}
    and
\begin{equation}\label{subaditive2}
\tau(\left| p(x + y) \right|) = \tau(v^*p(x + y)) = \tau(u^*px) + \tau(u^*py) \leq \tau(\left| px\right|) + \tau(\left|py\right|) < \infty.  
    \end{equation}
    Above, we have used \cite[Proposition~3.4.5]{dodsbook}, together with the observation that $\|u\|_{B(H)} = \|v\|_{B(H)} \leq 1$ and $px,xp \in L^{1}(\tau)$. This shows that $x + y \in L^{1}_{\mathcal{A},\text{Loc}}$ and proves it is a $\mathbb{C}$-vector space.

    The fact that $L^{1}(\tau)$ is a $\mathcal{M}$-Banach bimodule of measurable operators shows that if $z \in \mathcal{M}$, then $\tau(|zp|) \leq \|z\|_{B(H)} \tau(p) < \infty$ and $\tau(|pz|) \leq \|z\|_{B(H)} \tau(p) < \infty$. Therefore $\mathcal{M} \subseteq L^{1}_{\mathcal{A},\text{Loc}}$.

    Also, 
    $$
\tau(|x^*p|) = \tau(\left| (x^*p)^* \right|) = \tau(\left| px \right|).
$$
An analogous argument shows that $\tau(|px^*|) = \tau(xp)$, thus $x \in L^{1}_{\mathcal{A},\text{Loc}}$ if and only if $x^* \in L^{1}_{\mathcal{A},\text{Loc}}$. 

Finally, notice that $$(|x|p)^*(|x|p)=p|x|^2p = (xp)^*(xp),$$
this implies that $||x|p| = |xp|$, thus $x \in L^{1}_{\mathcal{A},\text{Loc}}$ implies $|x| \in L^{1}_{\mathcal{A}}$. This completes the proof. 
    
\end{proof}

In general, an operator in $L^{1}_{\mathcal{A},\text{Loc}}$ can belong to $S(\mathcal{M}) \setminus S(\tau)$, as it is shown in the following example.
\begin{ex}\label{ejemplo}
    Let $\mathcal{M}$ be the commutative von Neumann algebra identified with the measure space $[0,\infty)$ with the Lebesgue measure and $\mathcal{A} = \{ \chi_{[0,y]}: y > 0 \}$ be an ordered core. Then, there exists an operator $x \in L^{1}_{\mathcal{A},\text{Loc}} \setminus S(\tau)$.
\end{ex}
\begin{proof}
Following Example \ref{exCommutative}, we identify this von Neumann algebra with $L^{\infty}$. It is straightforward to check that the space $L^{1}_{\mathcal{A},\text{Loc}}$ coincides with the usual definition of $L^{1}_\text{Loc}$. Hence, the function 
    $$
g = \sum_{k=1}^\infty k \chi_{[k,k+\frac{1}{k}]}
    $$
    is locally integrable. However, \cite[Example~2.3.13]{dodsbook} shows that
    $$
S(\tau) = \{ f \in L^{0}: \exists A \in \Sigma, |A^c| < \infty \text{ and } f\chi_A \in L^{\infty}\}.
    $$
    Suppose that $g \in S(\tau)$ seeking a contradiction. Then, there exists a measurable set $A$ such that $|A|^c < \infty$ and $g \chi_A \in L^{\infty}$. 
    
    Since $g \chi_{A}$ is bounded, there exists $N \in \mathbb{N}$ such that for all $k \geq N$ the intersection $A \cap [k,k+\frac{1}{k}]$ has zero measure. Therefore, up to a set of measure zero $\bigcup_{k \geq N} [k,k+\frac{1}{k}] \subseteq A^c$. This is absurd, as we supposed that $A^c$ has finite measure. Therefore $g \not\in S(\tau)$.
\end{proof}

The fact that for operators, the inequality $|x+y| \leq |x| + |y|$ does not hold, even for the simplest of noncommutative algebras, does not allow us to use the techniques from \cite[Section~7]{coredecreasing} to define the level function. Instead, we study a function induced by any positive measurable operator and define the level function based on it.

\begin{prop}\label{phiprop}
    Let $\mathcal{M},\tau,\mathcal{A}$ be as in the previous propositions. Let $\mathcal{A}_{M}$ and $\Gamma_{\mathcal{A}_{M}}$ be the maximal core from Theorem \ref{maximal} and its image by $\tau$ respectively. Let $a: \Gamma_{\mathcal{A}_{M}} \to [0,\infty)$ be the map from Proposition \ref{abmaps}. For each $x \in S^+ \cap L^{1}_{\mathcal{A},\text{Loc}}$ define the function $\varphi_{x}: [0,\infty) \to [0,\infty)$ by
    $$
\varphi_{x}(s) := \tau\big( \left| \tau_{\mathcal{A}_M}^{-1} (a(s)) x \right| \big). 
    $$
    Here $\tau_{\mathcal{A}_{M}}$ is the restriction of $\tau$ to $\mathcal{A}_M$. Then:
    \begin{enumerate}[label=(\roman*)]
     \item\label{phihomogeneous} For any $\alpha \in \mathbb{C}$, $\varphi_{\alpha x} = |\alpha| \varphi_x$.
     \item\label{philinear} For all $x,y \in  L^{1}_{\mathcal{A},\text{Loc}}$, $\varphi_{|x+y|}(s) \leq \varphi_{|x|}(s) + \varphi_{|y|}(s)$ for all $s \geq 0$.
     \item\label{phiincreasing} The function $\varphi_x$ is increasing and constant outside $\Gamma_{\mathcal{A}_M}$.
     \item\label{phiapprox} For all $x \in L^{1}_{\mathcal{A},\text{Loc}}$, there exists a sequence $x_n \in \mathcal{M}^+$ such that $x_n \uparrow |x|$ and $\varphi_{x_n}(s) \uparrow \varphi_{|x|}(s)$ for all $s \geq 0$.
 \end{enumerate} 
\end{prop}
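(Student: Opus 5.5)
Throughout, write $p_s := \tau_{\mathcal{A}_M}^{-1}(a(s))$. This is well defined: $\Gamma_{\mathcal{A}_M}$ is closed, so $a(s) \in \Gamma_{\mathcal{A}_M}$, and Proposition \ref{coreprop1}.\ref{faithfulness} makes $\tau$ injective on $\mathcal{A}_M$. Each $p_s$ has finite trace, so $p_s x \in L^1(\tau)$ whenever $x$ is locally integrable. (For $p_s \in \mathcal{A}_M \setminus \mathcal{A}$, local integrability passes to $p_s$ because $p_s$ lies below some element of $\mathcal{A}$, using $\sigma$-boundedness and fullness.) Since $p_s$ depends on $s$ only through $a(s)$, the plan is to reduce every item to a statement about one fixed projection $p \in \mathcal{A}_M$ and the functional $x \mapsto \tau(|px|)$.

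Item \ref{phihomogeneous} is immediate from $|p(\alpha x)| = |\alpha|\,|px|$.

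For \ref{philinear} I would argue exactly as in (\ref{subaditive2}). Take the polar decomposition $p|x+y| = v\,|p|x+y||$. Since $|x+y| = w^*(x+y)$ for a partial isometry $w$, we can write $p|x+y| = p w^*(x+y)$. The difficulty is that $w^*$ sits between $p$ and $x+y$, so the terms do not split as cleanly as in (\ref{subaditive2}). I expect this to be the main obstacle.

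\begin{itemize}
\item First attempt: use $\tau(|pz|) = \tau\big((p|z|^2p)^{1/2}\big)$, that is, $\tau(|pz|) = \||z|p\|_{L^1}$, together with the operator-concavity/Lidskii type inequality available for singular values.
\item Cleaner fallback: compute $\tau(|p z|) = \sup\{ |\tau(u p z)| : u \in \mathcal{M}, \|u\| \le 1\}$. Then apply it with $z=|x+y|$, and control $|x+y|$ by $u_1|x|u_1^* + u_2|y|u_2^*$ (the Akemann–Anderson–Pedersen inequality).
\end{itemize}

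This gives $\varphi_{|x+y|} \le \varphi_{|x|} + \varphi_{|y|}$ up to the partial isometries. It may only hold with the conjugations absorbed, and I would check this carefully against the definition of $\varphi$.

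For \ref{phiincreasing}: if $s \le t$ then $a(s) \le a(t)$, hence $p_s \le p_t$ by total order. Then
$$|p_s x|^2 = x^* p_s x \le x^* p_t x = |p_t x|^2.$$
Operator monotonicity of the square root gives $|p_s x| \le |p_t x|$, so $\varphi_x(s) \le \varphi_x(t)$. On a component $(\alpha,\beta)$ of $[0,\infty) \setminus \Gamma_{\mathcal{A}_M}$, $a$ is constant equal to $\alpha$, so $\varphi_x$ is constant there.

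For \ref{phiapprox}, take $x_n = \min(|x|,n)\, e^{|x|}([0,n])$, or simply $x_n = f_n(|x|)$ with $f_n(t) = \min(t,n)$. These are in $\mathcal{M}^+$ and satisfy $x_n \uparrow |x|$. Fix $s$. Then $p_s x_n^2 p_s \uparrow p_s |x|^2 p_s$ in the sense of quadratic forms, so $|x_n p_s| \uparrow ||x| p_s|$, because square roots preserve increasing limits. Normality of the extended trace (valid on $S(\tau)^+$) then gives convergence of $\tau(|x_n p_s|)$. Using $\tau(|pz|) = \tau(|z^*p|)$ with $z$ positive converts this to $\varphi_{x_n}(s) \uparrow \varphi_{|x|}(s)$. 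Monotonicity in $n$ follows from the argument of \ref{phiincreasing} applied to $x_n \le x_{n+1}$.
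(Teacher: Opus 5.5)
The genuine gap is in (ii): you never establish it, and neither of your suggested routes closes it. The missing idea, which is what the paper uses, is to move $p$ to the \emph{right} of $|x+y|$ instead of fighting the partial isometry on the left. For $z\in S(\mathcal{M})$ and $p=p_s$, one has $\tau(|p|z||)=\tau(|(p|z|)^*|)=\tau(||z|p|)$, since $w$ and $w^*$ have the same singular values. Moreover, $(|z|p)^*(|z|p)=p|z|^2p=(zp)^*(zp)$ gives $||z|p|=|zp|$. Hence $\varphi_{|z|}(s)=\|zp_s\|_{L^1(\tau)}$, and taking $z=x+y$, $z=x$, $z=y$,
\[
\varphi_{|x+y|}(s)=\|xp_s+yp_s\|_{L^1(\tau)}\le \|xp_s\|_{L^1(\tau)}+\|yp_s\|_{L^1(\tau)}=\varphi_{|x|}(s)+\varphi_{|y|}(s).
\]
Your first attempt was one line away. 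The operator $(p|z|^2p)^{1/2}$ you wrote down is, for $z=|x+y|$, exactly $|(x+y)p|$. So no concavity or Lidskii-type inequality is needed, only the triangle inequality in $L^1(\tau)$.

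The Akemann--Anderson--Pedersen fallback, by contrast, fails. First, $z\mapsto\tau(|pz|)$ is not monotone on positive operators: in $B(H)$ with $p$ the rank-one projection onto a unit vector $e$, $\tau(|pz|)=\|ze\|=\langle z^2e,e\rangle^{1/2}$, and $z\mapsto z^2$ is not operator monotone. Second, even granting monotonicity, the partial isometries $u_i$ do not commute with $p$, so you would arrive at $\tau(|p\,u_1|x|u_1^*|)$ rather than $\varphi_{|x|}(s)$.

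Items (i), (iii) and (iv) are correct and follow the paper's route. The paper also proves (iii) via $\|px\xi\|^2\le\|qx\xi\|^2$ and operator monotonicity of the square root. It proves (iv) with the truncations $|x|e^{|x|}([0,n])$, replacing your appeal to ``square roots preserve increasing limits'' with an explicit measure-topology argument.

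Two small corrections:
\begin{enumerate}
\item In (iv), monotonicity in $n$ rests on $x_n^2\le x_{n+1}^2$, which holds because the $x_n$ are functions of $|x|$. It does not follow from the argument of (iii) applied to $x_n\le x_{n+1}$.
\item Your parenthetical claim that every $p_s\in\mathcal{A}_M\setminus\mathcal{A}$ lies below an element of $\mathcal{A}$ fails when $\tau(1_{\mathcal{M}})<\infty$ and $1_{\mathcal{M}}\in\mathcal{A}_M\setminus\mathcal{A}$, as in Example \ref{exHyperfinite}. There $\varphi_x(s)=\tau(|x|)$ for $s\ge\tau(1_{\mathcal{M}})$, which may be infinite for $x\in L^1_{\mathcal{A},\text{Loc}}$. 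The paper's own finiteness claim has the same defect, so this is an issue with the statement rather than with your method.
\end{enumerate}
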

\begin{proof}
    Since $\tau$ is injective when restricted to any ordered core, it follows that its inverse is well defined. Also, the fact that $x \in L^{1}_{\mathcal{A},\text{Loc}}$ ensures that $\varphi_x$ always takes finite values. Therefore, the definition of $\varphi_x$ makes sense. Statement \ref{phihomogeneous} is immediate. 
    
    To show statement \ref{philinear}. In the proof of Proposition \ref{l1loc} it was shown that $|x+y| \in L^{1}_{\mathcal{A},\text{Loc}}$ and using formula \ref{subaditive1}, we get
    \begin{align*}
        \varphi_{|x+y|}(s) &= \tau\big( \left| \tau_{\mathcal{A}_M}^{-1} (a(s)) |x+y| \right| \big) = \tau\big( \left| |x+y| \tau_{\mathcal{A}_M}^{-1} (a(s)) \right| \big) \\
        &= \tau\big( \left| (x+y) \tau_{\mathcal{A}_M}^{-1} (a(s)) \right| \big) \leq \tau\big( \left| x \,  \tau_{\mathcal{A}_M}^{-1} (a(s)) \right| \big) + \tau\big( \left| y \,  \tau_{\mathcal{A}_M}^{-1} (a(s)) \right| \big) \\
        &= \tau\big( \left| |x| \,  \tau_{\mathcal{A}_M}^{-1} (a(s)) \right| \big) + \tau\big( \left| |y| \,  \tau_{\mathcal{A}_M}^{-1} (a(s)) \right| \big) \\
        &= \tau\big( \left|  \tau_{\mathcal{A}_M}^{-1} (a(s)) |x| \right| \big) + \tau\big( \left|  \tau_{\mathcal{A}_M}^{-1} (a(s)) |y| \right| \big) = \varphi_{|x|}(s) + \varphi_{|y|}(s).
    \end{align*}
    
    To prove \ref{phiincreasing}, let $x \in S^+(\mathcal{M}) \cap L^{1}_{\mathcal{A},\text{Loc}}$, $s_1 \leq s_2$, and $p,q \in \mathcal{A}$ satisfying $\tau(p) = a(s_1)$ and $\tau(q) = a(s_2)$. Since $a$ is increasing, we have that $p \leq q$. Thus $\varphi_x(s_1) = \tau(|px|)$ and $\varphi_{x}(s_2) = \tau(|qx|)$. We will show that $|px| \leq |qx|$. By \cite[Theorem~1.7.3]{dodsbook} we have that $\mathfrak{D}(|px|) = \mathfrak{D}(px)$, $\mathfrak{D}(|qx|) = \mathfrak{D}(qx)$ and for each $\xi \in \mathfrak{D}(|px|) \cap \mathfrak{D}(|qx|)$ we have the equalities $\|px \xi\|_H = \||px|\xi\|_{H}$ and $\|qx \xi\|_H = \||qx|\xi\|_{H}$. Notice that $\mathfrak{D}(px) = \mathfrak{D}(x) = \mathfrak{D}(qx)$ and by orthogonality of $p$ and $q-p$ we get
    \begin{align*}
        \|qx \xi\|_H^2 &= \| px\xi + (q-p)x\xi \|^2_H = \|px \xi\|_{H}^2 + \|(q-p)x \xi\|_{H}^2 \geq \|px \xi\|_H^2.   
    \end{align*}
    By \cite[Proposition~2.2.24]{dodsbook} we have that $|px|^2 \leq |qx|^2$, an application of  \cite[Corollary~2.2.28]{dodsbook} shows that $|px| \leq |qx|$. Monotonicity of the trace shows that $\varphi_{x}(s_1) \leq \varphi_{x}(s_2)$ and proves that $\varphi_x$ is increasing.

    Since $\Gamma_{\mathcal{A}_M}$ is closed, its complement is a countable union of open intervals. It immediately follows that $a$ is constant on those intervals, thus $\varphi_x$ is also constant on those intervals and completes the proof of statement \ref{phiincreasing}. 

    To show statement \ref{phiapprox}, fix $x \in L^{1}_{\mathcal{A},\text{Loc}}$. Using the spectral measure $e^{|x|}$, define the bounded positive operators
    $$
x_n = \int\limits_{[0,n]} r \, d e^{|x|}(r), \quad \forall n \in \mathbb{N}^+.  
    $$
    Fix $p \in \mathcal{A}_M$, we will show that $|x_n p|$ is an increasing sequence bounded above by $||x|p|$.

    Fix $\xi \in \mathfrak{D}(xp)$, then $p(\xi) \in \mathfrak{D}(x)$. Using \cite[Theorem~1.5.7]{dodsbook} we have
    \begin{align*}
\| ||x|p|(\xi) \|^2_{H} &= \| |x|p(\xi) \|^2_{H} = \int\limits_{[0,\infty)} r^2 \, d e^{|x|}_{p \xi,p\xi} = \sup_{n} \int_{[0,n]} r^2 \, d e^{|x|}_{p \xi,p\xi} \\
&= \|x_n p \xi\|^2_H = \||x_n p| \xi\|^2_H.   
    \end{align*}
    The above inequality shows that $\||x_n p| \xi\|_H$ increases to $\|\left||x| p\right| \xi\|_H$. Since $x_n \in \mathcal{M}^+$, it is clear that $\mathfrak{D}(x p) \subseteq \mathfrak{D}(x_n p)$ for each $n$. By \cite[Proposition~2.2.24]{dodsbook} we have that $|xp_n|^2 \leq |x p_{n+1}|^2 \leq ||x|p|^2$, an application of  \cite[Corollary~2.2.28]{dodsbook} shows that $|xp_n| \leq |x p_{n+1}| \leq \left||x|p\right|$ for all $n \in \mathbb{N}^+$. 

    Since $\{|x_n p|\}$ and $\{ |x_n p|^2 \}$ are increasing sequences in $S(\tau)$ bounded above by $||x|p|, ||x|p|^2 \in S(\tau)$ respectively, by \cite[Proposition~2.3.10]{dodsbook} $|x_n p| \uparrow y \leq |xp|$ and $|x_n p|^2 \uparrow z \leq |xp|^2$  for some $y,z \in S(\tau)^+$. 

    The sequences $\{y - |x_np|\}$ and $\{z - |x_np|^2\}$ are positive, bounded above by $|xp|$ and $|xp|^2$ which are operators in $S_0(\tau)$ and the sequences satisfy $y - |x_np| \downarrow 0$ and $z - |x_np|^2 \downarrow 0$. By \cite[Theorem~2.6.3]{dodsbook} we have the convergence $|x_np| \to y$ and $|x_np|^2 \to z$ in the measure topology. An application of \cite[Theorem~2.8.7]{dodsbook} to the square root function shows that $|x_np| \to z^{\frac{1}{2}}$. By uniqueness of limits in the measure topology, we conclude that $y = z^{\frac{1}{2}}$ and $y^2 \leq |px|^2$.

    Let $\xi \in \mathfrak{D}(y)$, since $|x_n p|^2 \leq y^2$, we have that $\| x_np \xi \|_{H}^2 \leq \|y \xi\|^2_{H}$. Therefore
    \begin{align*}
        \int\limits_{[0,n]} r^2 \, de^{|x|}_{p \xi,p\xi} &\leq \|y \xi\|^2_{H} < \infty.
    \end{align*}
    Taking supremum we get $\|xp \xi\|_{H}^2 \leq \|y \xi\|^2_{H} < \infty$, therefore $p \xi \in \mathfrak{D}(x)$. This shows that $\mathfrak{D}(y) \subseteq \mathfrak{D}(|xp|)$ and also that $\||xp| \xi\|_{H} \leq \|y \xi\|_{H}$ for all $\xi \in \mathfrak{D}(y)$. By \cite[Proposition~2.2.24]{dodsbook} we have that $|xp|^2 \leq y^2$. Therefore $y^2 = ||x|p|^2$ and it follows that $y = ||x|p|$.

    Hence $|x_np| \uparrow ||x|p|$ for all $p \in \mathcal{A}_M$. By normality of the trace we get
    \begin{align*}
\varphi_{x_n}(s) &= \tau\big( \left|  \tau_{\mathcal{A}_M}^{-1} (a(s)) x_n \right| \big) = \tau\big( \left|  x_n \, \tau_{\mathcal{A}_M}^{-1} (a(s)) \right| \big) \uparrow \tau\big( \left|  |x| \, \tau_{\mathcal{A}_M}^{-1} (a(s)) \right| \big) \\
&= \varphi_{|x|}(s),
    \end{align*}
    for all $s \geq 0$. This completes the proof of \ref{phiapprox}.
    
\end{proof}

We now introduce the level function related to a measurable operator in $L^{1}_{\mathcal{A},\text{Loc}}$.

\begin{defn}\label{deflevel}
        Let $x \in L^{1}_{\mathcal{A},\text{Loc}} \cap S^{+}(\mathcal{M})$ and $\varphi_x$ the function defined in Proposition \ref{phiprop}. We say that a function $f:[0,\infty) \to [0,\infty)$ is a $\mathcal{A}$-level function of $x$ if:
        \begin{enumerate} [label=(\roman*)]
            \item \label{leveldec} The function $f$ is non-increasing $\lambda$-measurable, where $\lambda$ is the measure induced by the core $\mathcal{A}$.
            \item \label{levelmajorant} The inequality 
            $$
\varphi_{x}(s) \leq \int_{0}^s f
            $$
            holds for all $s > 0$.
            \item \label{levelleast} If $G$ is a concave majorant of $\varphi_x(s)$, then 
            $$
\int_{0}^s f \leq G(s),
            $$
            holds for all $s > 0$.
        \end{enumerate}
\end{defn}

We establish the existence and uniqueness of $\mathcal{A}$-level functions and relate them to a $\tau$-measurable operator.

\begin{thm}\label{existslevel}
    Let $\mathcal{M}$ be a von Neumann algebra equipped with a semifinite normal faithful trace $\tau$ and an ordered core $\mathcal{A}$. Let $x \in S(\mathcal{M})^+ \cap L^1_{\text{Loc},\mathcal{A}}$ such that the family 
    \begin{equation}\label{condnotempty}
\mathcal{F}_x = \left\{ g : g \text{ is concave and } \varphi_x(s) \leq g(s), \ \forall s> 0   \right\} 
\end{equation}
is not empty, and that $x$ satisfies
\begin{equation}\label{conddecr}
    \tau(|p_n x|) \downarrow 0, \quad \text{if } \{p_n\} \subseteq \mathcal{A} \quad \text{ and} \quad \tau(p_n) \downarrow 0.
\end{equation} Then, there exists a unique $\mathcal{A}$-level function of $x$ denoted $f_x$. Moreover, there exists a unique core decreasing operator $x^o$ such that $R(x^o) = f_x$ and $x^o = Q(f_x)$. Here $R$ and $Q$ are the transition maps from Propositions \ref{Rmap} and \ref{Qmap}.
\end{thm}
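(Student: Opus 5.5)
We build $f_x$ as the derivative of the least concave majorant of $\varphi_x$, show it is the unique function with the three properties of Definition \ref{deflevel}, and then pass to an operator through the transition maps.

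\textbf{Step 1: the least concave majorant.} Since $\mathcal{F}_x$ is nonempty by (\ref{condnotempty}), define
$$
\tilde G(s)=\inf\{g(s): g\in\mathcal{F}_x\}, \qquad s> 0 .
$$
This is finite because the family is nonempty, concave as an infimum of concave functions, and dominates $\varphi_x$ pointwise. So $\tilde G$ is the least concave majorant.

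\textbf{Step 2: continuity at the origin.} We need $\tilde G(0^+)=0$, and this is the step I expect to be the main obstacle.
\begin{itemize}
\item Condition (\ref{conddecr}) together with Proposition \ref{coreprop1}\ref{existencedecreasing} gives $\varphi_x(s)\to 0$ as $s\downarrow 0$ whenever $0$ is a limit point of $\Gamma_{\mathcal{A}_M}$.
\item Otherwise $a(s)=0$ near $0$, so $\varphi_x$ vanishes there.
\item For the majorant itself: given $\epsilon>0$, choose $\delta$ with $\varphi_x\le\epsilon$ on $[0,\delta]$. Then $\min\bigl(\epsilon+ (\tilde G(\delta)/\delta)s,\ \tilde G(s)\bigr)$ is concave and still majorizes $\varphi_x$, using that $\varphi_x$ is increasing (Proposition \ref{phiprop}\ref{phiincreasing}). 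This yields $\tilde G(0^+)\le\epsilon$.
\end{itemize}

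\textbf{Step 3: the level function.} Since $\tilde G$ is concave, nonnegative, and satisfies $\tilde G(0^+)=0$, it is absolutely continuous on $[0,\infty)$. Hence $\tilde G(s)=\int_0^s f$, where $f$ is its right derivative, which is nonnegative and non-increasing.

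To obtain $\lambda$-measurability, show that $f$ may be taken constant on each component interval of $[0,\infty)\setminus\Gamma_{\mathcal{A}_M}$. On each such interval $\varphi_x$ is constant (Proposition \ref{phiprop}\ref{phiincreasing}). Minimality of $\tilde G$ then forces it to be affine there, since replacing it by the chord between the endpoints keeps both concavity and majorization. Consequently $f$ is a function of $b(s)$, hence $\sigma(\mathcal{B})$-measurable, so it defines an element of $L^0_\lambda$. With this, properties \ref{leveldec}--\ref{levelleast} of Definition \ref{deflevel} are immediate.

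\textbf{Step 4: uniqueness of $f_x$.} Suppose $f_1,f_2$ are two level functions. Each integral $\int_0^s f_i$ is concave, majorizes $\varphi_x$, and by \ref{levelleast} lies below every concave majorant. Hence each equals $\tilde G$, so $f_1=f_2$ a.e.

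\textbf{Step 5: the operator $x^o$.} We have $f_x\ge0$ and $f_x$ decreasing. To see $f_x\in S(\lambda)$, note that $f_x$ is bounded on $[\epsilon,\infty)$, and $\int_0^\epsilon f_x<\infty$ implies $\lambda(\{f_x>c\})<\infty$ for large $c$. Theorem \ref{QRmaps}\ref{Qcoredec} then gives a unique $x^o\in\mathcal{M}^{\downarrow}_{\mathcal{A}}$ with $R(x^o)=f_x$. Since $Q$ is the inverse of $R$ on $S(\tau_{\mathcal{M}_\mathcal{A}})$, we also get $x^o=Q(f_x)$.
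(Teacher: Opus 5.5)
Your route is the paper's: least concave majorant, forcing $\tilde G(0^+)=0$ with a better majorant near $0$, the chord argument on the gaps of $\Gamma_{\mathcal{A}_M}$, uniqueness by two-sided minimality, and transport through $Q$ and $R$. Your Step 2 is a tidier version of the paper's piecewise $W$, since you never have to check $m\ge\esssup_{(s_0,\infty)}f_x$. However, the majorization for $s>\delta$ does not come from monotonicity of $\varphi_x$. It comes from concavity of $\tilde G\ge 0$, namely $\tilde G(s)\le (s/\delta)\tilde G(\delta)$ for $s\ge\delta$, and this even shows that the line $\epsilon+(\tilde G(\delta)/\delta)s$ alone lies in $\mathcal{F}_x$. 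Monotonicity of $\varphi_x$ is needed instead in your first bullet. There you must pass from (\ref{conddecr}), which controls $\tau(|px|)$ only for $p\in\mathcal{A}$, to $\varphi_x\le\epsilon$ on all of $(0,\delta]$.

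The genuine gap is in Step 3, in your choice of representative. For a bounded gap $(c,d)$ you have $b^{-1}(\{d\})=(c,d]$, so $d$ is an atom of $\lambda$ of mass $d-c$ and the value $f(d)$ matters. Your right derivative is constant on $[c,d)$, but $f(d)=\tilde G'_+(d)$ can be strictly below the chord slope. So $f$ is \emph{not} a function of $b$, and $\int_{[0,s]}f\,d\lambda\neq\int_0^s f$. Take the operator in the example after Theorem \ref{downisnormed} (where $\Gamma_{\mathcal{A}}=\mathbb{N}$). There the right derivative gives $f(1)=2\sqrt2-\sqrt5$ and $f(2)=0$, so $\tau(Q(f)p_1)=2\sqrt2-\sqrt5$ rather than $\tilde G(1)=\sqrt5$.

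For the same reason, Step 4 only gives uniqueness Lebesgue-a.e., which does not determine the $\lambda$-class: left and right derivatives both satisfy Definition \ref{deflevel}. Hence the uniqueness of $x^o$ in Step 5 does not follow. The fix is to normalize $f_x$ to be left-continuous (the left derivative), i.e.\ $f_x=f_x\circ b$. Left-continuous nonincreasing functions that agree a.e.\ agree everywhere on $(0,\infty)$, which turns Step 4 into a $\lambda$-a.e.\ statement. The paper is silent on this normalization too.

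Finally, when $\tau(1_{\mathcal{M}})=d<\infty$ the component $(d,\infty)$ has no chord. Use that $\min(\tilde G,\tilde G(d))\in\mathcal{F}_x$ to get $f_x=0$ on $(d,\infty)$, as the paper does.
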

\begin{proof}
    Let $G(s) = \inf_{\mathcal{F}_x} g(s)$. The pointwise infimum of a family of concave functions is concave and, from the construction, it is clear that $\varphi_x(s) \leq G(s)$ for all $s > 0$. Since concave functions are absolutely continuous, there exists a decreasing function $f_x$ such that 
$$
\varphi_x(s) \leq G(s) = G(0^+) + \int_0^s f_x(s).
$$
We now show that $G(0^+) = 0$. By hypothesis $x$ satisfies (\ref{conddecr}), therefore $\varphi_x(0^+) = 0$. Suppose that $G(0^+) = \delta > 0$ and choose $s_0 > 0$ small enough such that $\varphi_x(s) \leq \delta/2$ for all $s \leq s_0$, and consider the function
$$
W(s) = \bigg(s \frac{G(s_0)-\delta/2}{s_0} + \delta/2 \bigg) \chi_{[0,s_0]}(s) + G(s) \chi_{(s_0,\infty)}(s).
$$
By construction, $W(s) \geq \delta/2 \geq \varphi_x(s)$ for all $s \leq s_0$ and $W(s) = G(s) \geq \varphi_x(s)$ for all $s > s_0$. Set $m = \frac{G(s_0)-\delta/2}{s_0}$ and notice that
$$
W(s) = \delta/2 + \int_0^s \big(m \chi_{[0,s_0]} + f_x \chi_{(s_0,\infty)}\big).
$$
To show that $W$ is a concave majorant of $\varphi_x$, it suffices to show that $m \geq \esssup_{(s_0,\infty)} f_x$. Set $r = \esssup_{(s_0,\infty)} f_x$ and suppose that $r > m$. Using the fact that $f_x$ is decreasing, we get
$$
G(s_0) = \delta + \int_{0}^{s_0} f_x \geq \delta + \int_{0}^{s_0} r = \delta  + rs_0 > \delta + m s_0 > \delta/2 + m s_0 = G(s_0),  
$$
therefore $G(s_0) > G(s_0)$, which is impossible. This shows that $W$ is a concave majorant of $\varphi_x$, and by minimality of $G$, we get that $G(s) \leq W(s)$ for all $s > 0$. Letting $s \downarrow 0$ we get $\delta \leq \delta/2$ which is impossible. Hence, $G(0^+) = 0$.

This shows that $f_x$ satisfies items \ref{levelmajorant} and \ref{levelleast} in Definition \ref{deflevel}. 

It remains to show that $f_x$ is $\lambda$-measurable. Since $f$ is decreasing, it suffices to show that $f$ is constant outside of $\Gamma_{\mathcal{A}_M}$. 

By assumption, $\Gamma_{\mathcal{A}_M}$ is closed in $[0,\infty)$; thus, its complement is a countable union of open intervals of the form $(c,d)$ or of the form $(d,\infty)$. 

We proceed to show that $f_x$ is constant on any bounded connected component $(c,d)$ of the complement of $\Gamma_{\mathcal{A}_M}$. Consider the function $$L(s) = G(s) \big(1-\chi_{(c,d)}(s)\big) + \Big(\frac{G(d) - G(c)}{d-c} (s-c) + G(c) \Big) \chi_{(c,d)}(s).$$

It is clear that $L$ is differentiable almost everywhere and its derivative is $$l(s) = f_x \Big( 1 - \chi_{(c,d)} \Big) + \frac{G(d) - G(c)}{d-c} \chi_{(c,d)}$$. Since $f_x$ is decreasing, it follows that 
$$
\essinf_{[0,c]} f_x \geq f_x(s) \geq  \esssup_{[d,\infty)} f_x,  \quad \forall s \in (c,d).
$$
Thus 
$$
\essinf_{[0,c]} f_x \geq \frac{1}{d-c} \int_{c}^d f_x(s) \geq  \esssup_{[d,\infty)} f_x,
$$
hence $\essinf_{[0,c]} f_x \geq l(s) \geq  \esssup_{[d,\infty)} f_x$ for all $s \in (c,d)$. It follows that $l$ is a decreasing function, therefore $L$ is concave. Notice that $L(c) = G(c)$, $L(d) = G(d)$, and $L$ is increasing, therefore 
$$
\varphi_x(s) = \varphi(c^+) \leq G(c) \leq L(s), \quad \forall s \in (c,d). 
$$
Thus $L$ is a concave majorant of $\varphi_x$. By minimality, we conclude that $G(s) \leq L(s)$. However, since $G$ is concave, for any $s \in (c,d)$ we must have that $G(s)$ lies above the convex combination of $G(c)$ and $G(d)$ associated with $s$, hence $G(s) \geq L(s)$ and we conclude that $G(s) = L(s)$. Therefore, $f_x$ is constant on $(c,d)$. 

Similarly, if $(d,\infty)$ is a connected component of the complement of $\Gamma_{\mathcal{A}_M}$ and $s > d$, then $\varphi_x(s) = \varphi_x(d)$. Therefore the function $s \mapsto \int_{0}^s f_x \chi_{[0,d]}$ is a concave majorant of $\varphi_x$. It follows that $G(s) \leq  \int_{0}^s f_x \chi_{[0,d]}$, with equality holding when $s \leq d$. We conclude that $f_x = 0$ for $s > d$ and prove that $f_x$ is constant on the complement of $\Gamma_{\mathcal{A}_M}$.

To show the uniqueness of $f_x$, suppose that $g_x$ is another $\mathcal{A}$-level function of $x$. By \ref{levelleast} in Definition \ref{deflevel} we have the inequalities
$$
\int_{0}^s f_x \leq  \int_{0}^s g_x, \quad \text{and} \quad  \int_{0}^s g_x \leq  \int_{0}^s f_x, \quad \forall s > 0
$$
It follows that $f_x = g_x$ almost everywhere.

The function $f_x$ is decreasing, so for any $s \in \Gamma_{\mathcal{A}_M} \setminus \{0\}$ the function $f_x \chi_{[s,\infty)}$ is bounded, thus $f_x$ is $\lambda$-measurable and by the $*$-isomorphism of measurable functions, we get that $x^o =Q(f_x)$ satisfies the rest of the statements. 
\end{proof}

As a consequence of the previous theorem, we can define level operators.
\begin{defn}
    Let $(\mathcal{M},\tau,\mathcal{A})$ be as in the previous theorem. For each $x \in L^{1}_{\mathcal{A},\text{Loc}}$ satisfying (\ref{conddecr}), its \textbf{level operator} is the operator $(|x|)^o$ constructed in Theorem \ref{existslevel}.
\end{defn}

The following statement shows that the collection of measurable operators satisfying (\ref{conddecr}) at least contains the von Neumann algebra $\mathcal{M}$ and the space $L^{1}(\tau)$.

\begin{prop}
    Let $x \in \mathcal{M}$ and $y \in L^{1}(\tau)$, then $\varphi_x(0^+) = 0 = \varphi_y(0+)$ and the collections $\mathcal{F}_x$ and $\mathcal{F}_y$ are not empty. In consequence, there exists unique level operators $x^o$ and $y^o$ in $\mathcal{M}_{\mathcal{A}}$
\end{prop}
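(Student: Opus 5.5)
The plan is to verify, for $x\in\mathcal{M}$ and for $y\in L^{1}(\tau)$, the two hypotheses of Theorem \ref{existslevel}: condition (\ref{conddecr}), which gives $\varphi(0^+)=0$, and nonemptiness of $\mathcal{F}$. Theorem \ref{existslevel} then yields the level operators, and a final bound places them in $\mathcal{M}_{\mathcal{A}}$. Since $\varphi_x=\varphi_{|x|}$-type identities hold via $||x|p|=|xp|$ and $\tau(|px|)=\tau(|x^*p|)$, I will work with $|x|$ and $|y|$ as needed. First, both operators lie in $L^1_{\mathcal{A},\text{Loc}}$: for $x$ this is Proposition \ref{l1loc}; for $y$ it follows from the bimodule inequality $\tau(|py|)\le\|p\|\,\tau(|y|)=\tau(|y|)$.

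\textbf{Case $x\in\mathcal{M}$.} For $p\in\mathcal{A}_M$ use the bimodule estimate $\tau(|px|)\le\|x\|_{B(H)}\tau(p)$, which is the analogue of (\ref{commuteboundeq}). This gives $\varphi_x(s)\le\|x\|_{B(H)}\,a(s)\le\|x\|_{B(H)}\,s$. Hence $g(s)=\|x\|_{B(H)}s$ is a linear, hence concave, majorant, so $\mathcal{F}_x\neq\emptyset$. Likewise $\tau(|p_nx|)\le\|x\|\tau(p_n)\downarrow0$, which is (\ref{conddecr}), and so $\varphi_x(0^+)=0$.

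\textbf{Case $y\in L^1(\tau)$.} Here $\varphi_y(s)\le\tau(|y|)$ for all $s$, so the constant function $\tau(|y|)$ lies in $\mathcal{F}_y$. The main obstacle is (\ref{conddecr}), namely $\tau(|p_ny|)\to0$ when $\tau(p_n)\downarrow0$. I would first approximate: given $\epsilon>0$, choose $z\in\mathcal{M}$ with $\tau(|y-z|)<\epsilon$, using density of $\mathcal{M}\cap L^1$ in $L^1(\tau)$, for instance truncating via the spectral measure $z=y\,e^{|y|}[0,N]$. Then
\[
\tau(|p_ny|)\le\tau(|p_n(y-z)|)+\tau(|p_nz|)\le\epsilon+\|z\|_{B(H)}\tau(p_n),
\]
where the first term is controlled by the bimodule property and the triangle inequality in $L^1(\tau)$. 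Letting $n\to\infty$ and then $\epsilon\to0$ gives the claim. Monotonicity of the sequence follows from Proposition \ref{phiprop}\ref{phiincreasing}, since decreasing traces force a decreasing sequence in the totally ordered core.

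Applying Theorem \ref{existslevel} then gives unique $x^o=Q(f_x)$ and $y^o=Q(f_y)$. To see that these lie in $\mathcal{M}_{\mathcal{A}}$ and not merely in $S(\tau_{\mathcal{M}_\mathcal{A}})$, I need $f\in L^\infty_\lambda$. For $x$, the majorant $\|x\|s$ forces $\int_0^s f_x\le\|x\|s$, so $f_x\le\|x\|$ by monotonicity, and $Q$ maps $L^\infty_\lambda$ into $\mathcal{M}_{\mathcal{A}}$ by Proposition \ref{Qmap}. For $y$, boundedness of $f_y$ near $0$ is the delicate point, since a level function of an $L^1$ element may blow up at the origin. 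I would try to interpret the membership as $y^o\in S(\tau_{\mathcal{M}_\mathcal{A}})\cap L^1$, using $\int_0^\infty f_y=\sup G\le\tau(|y|)$. If genuine boundedness is required, I would fall back on the fact that $f_y\chi_{[s,\infty)}$ is bounded for every $s\in\Gamma_{\mathcal{A}_M}\setminus\{0\}$, as in the end of the proof of Theorem \ref{existslevel}.
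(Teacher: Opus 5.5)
Your proof is correct. For $x\in\mathcal{M}$ it is the paper's argument. The paper derives $|xp|^2\le\|x\|_{B(H)}^2p$ by hand to get $|xp|\le\|x\|_{B(H)}p$, while you quote the H\"older/bimodule bound $\tau(|px|)\le\|x\|_{B(H)}\tau(p)$; both conclude $\varphi_x(s)\le\|x\|_{B(H)}a(s)\le\|x\|_{B(H)}s$.

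For $y\in L^1(\tau)$ your route to (\ref{conddecr}) differs from the paper's. The paper observes that $p_ny\to0$ in the measure topology, hence $|p_ny|\to0$ in measure. Since $|p_ny|\le|y|\in L^1(\tau)$, it then applies the noncommutative dominated convergence theorem. You instead truncate, $z=y\,e^{|y|}[0,N]$, and reduce to the bounded case through $\tau(|py|)\le\tau(|y-z|)+\|z\|_{B(H)}\tau(p)$. The paper's version is a one-liner once the measure-topology machinery is available. Yours needs only the $L^1$ triangle inequality, the bimodule inequality and spectral calculus. It is also uniform in $p$: it shows $\tau(|py|)\to0$ as $\tau(p)\to0$ over all projections, so $\varphi_y(0^+)=0$ follows without choosing sequences or a dominating operator.

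You also prove something the paper's proof never checks. The paper only invokes Theorem~\ref{existslevel}, which places level operators in $S(\tau_{\mathcal{M}_\mathcal{A}})$. Your remark that $\|x\|_{B(H)}s\in\mathcal{F}_x$ forces $f_x(s)\le\frac1s\int_0^sf_x\le\|x\|_{B(H)}$ really gives $x^o=Q(f_x)\in\mathcal{M}_\mathcal{A}$.

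Your doubt about $y^o$ is well founded, and you should commit to it instead of hedging: the clause $y^o\in\mathcal{M}_\mathcal{A}$ is false in general. Work in the setting of Example~\ref{ejemplo}: $\mathcal{M}=L^\infty[0,\infty)$ with the core $\{\chi_{[0,t]}\}$, so $\lambda$ is Lebesgue measure, $\mathcal{M}_\mathcal{A}=\mathcal{M}$ and $Q$ is the identity. Take $y(t)=t^{-1/2}\chi_{(0,1]}(t)\in L^1(\tau)$. Then $\varphi_y(s)=2\sqrt{\min(s,1)}$ is already concave, so $f_y=y$ and $y^o=y\notin\mathcal{M}_\mathcal{A}$.

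The correct conclusion for $y$ is your first interpretation, $y^o\in S(\tau_{\mathcal{M}_\mathcal{A}})\cap L^1(\tau)$. Your fallback cannot rescue boundedness. Boundedness of $f_y\chi_{[s,\infty)}$ for each $s>0$ only shows $f_y\in S(\lambda)$, i.e.\ $\tau$-measurability of $y^o$. That is exactly how the end of the proof of Theorem~\ref{existslevel} uses it. State it as that, not as a route to a bounded $y^o$.
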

\begin{proof}
     Let $p \in P(\mathcal{M})$, notice that for any $\xi \in H$ we have
    $$
\langle |xp|^2 \xi, \xi \rangle = \langle |xp| \xi, |xp| \xi \rangle = \|xp \xi\|^2_{H} \leq \|x\|^2_{B(H)} \|p \xi \|^2_{H} = \langle \|x\|^2_{B(H)} p \xi, \xi \rangle. 
    $$
    It follows that $|xp|^2 \leq \|x\|_{B(H)}^2 p$, therefore $|xp| \leq \|x\|_{B(H)} p$. 

Hence, for any $s > 0$ we have
\begin{align*}
\varphi_{x}(s) &= \tau\big( \left| \tau_{\mathcal{A}_M}^{-1} (a(s)) x \right| \big) = \tau\big( \left| x\ \tau_{\mathcal{A}_M}^{-1} (a(s))  \right| \big) \leq \|x\|_{B(H)} \tau\big( \ \tau_{\mathcal{A}_M}^{-1} (a(s))  \big) \\
&= \|x\|_{B(H)} a(s) \leq s \|x\|_{B(H)}. 
\end{align*}
The function $s \mapsto \|x\|_{B(H)} s$ is a concave majorant of $\varphi_x$, therefore, the collection
$$
\mathcal{F}_x = \left\{ g : g \text{ is concave and } \varphi_x(s) \leq g(s), \ \forall s> 0   \right\} 
$$
is not empty. Also letting $s \downarrow 0$ we get $\varphi(0^+) = \lim\limits_{s \to 0^+} \|x\|_{B(H)} s = 0$. Therefore, Theorem $\ref{existslevel}$ applies and shows that a unique level operator $x^o$ exists.

If $y \in \mathcal{M}$, by the argument used in the proof of Proposition \ref{phiprop}.\ref{phiincreasing}, we have that if $\{p_n\} \downarrow 0$, then $|p_n y| \downarrow 0$. Since multiplication is continuous in the measure topology and $y \in S(\tau)$, then $p_n y \to 0$ in the measure topology. Thus, $|p_n y| \to 0$ in measure. Notice that $|p_n y| \leq |y|$, therefore by the dominated convergence theorem \cite[Theorem~3.4.21]{dodsbook} we get $\tau(|p_n y|) \to 0$, hence $\varphi_y(0^+) = 0$. 

Clearly, $\varphi_y$ is dominated by the concave function $s \mapsto \|y\|_{L^{1}(\tau)}$, thus $\mathcal{F}_{y}$ is not empty. Another application of Theorem $\ref{existslevel}$ shows that a unique level operator $y^o$ exists and completes the proof.

\end{proof}
We now show that the level function extends the construction done in \cite[Section~7]{coredecreasing}.
\begin{thm}\label{levelfunctionextends}
    Let $x \in S(\mathcal{M}_{\mathcal{A}}) \cap L^{1}_{\text{Loc},\mathcal{A}}$ satisfying (\ref{conddecr}). Then $$x^o = Q\Big(\big(R(x)\big)^o\Big),$$ where $\big(R(x)\big)^o$ is the classic level function for $\lambda$-measurable functions on the half line. Here $Q$ and $R$ are the extended operators from Proposition \ref{QRmaps}.
\end{thm}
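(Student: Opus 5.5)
The strategy is to show that the function $\varphi_x$ from Proposition \ref{phiprop}, built from $x$, coincides on $\Gamma_{\mathcal{A}_M}$ with the primitive $s\mapsto\int_{[0,s]}R(x)\,d\lambda$. Once this holds, the least concave majorant of $\varphi_x$ is the least concave majorant defining the classical level function of $R(x)$. Since $x^o=Q(f_x)$ by Theorem \ref{existslevel}, uniqueness of $\mathcal{A}$-level functions gives $f_x=(R(x))^o$, and therefore $x^o=Q\big((R(x))^o\big)$.

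\emph{Step 1.} We may assume $x\ge0$, since the level operator is defined through $|x|$ and $R$ is compatible with $|\cdot|$ on $S(\tau_{\mathcal{M}_\mathcal{A}})$ by Theorem \ref{QRmaps}. Fix $s\ge0$ and let $p=\tau_{\mathcal{A}_M}^{-1}(a(s))$.

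\emph{Step 2.} Because $x$ is affiliated with the abelian algebra $\mathcal{M}_\mathcal{A}$ and $p\in\mathcal{A}_M\subseteq P(\mathcal{M}_\mathcal{A})$ (Proposition \ref{genabelian}), $x$ and $p$ commute. Hence $|px|=px=x^{1/2}px^{1/2}\ge0$, and $\tau(|px|)=\tau(xp)$ by Proposition \ref{coreprop1}\ref{orderpcore}.

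\emph{Step 3.} By formula \eqref{extfunctor} in Theorem \ref{QRmaps}\ref{QRfunctor}, applied with $p\in\mathcal{A}_M$,
$$\varphi_x(s)=\int_{[0,a(s)]}R(x)\,d\lambda .$$
Proposition \ref{genabelian} gives $\mathcal{M}_{\mathcal{A}_M}=\mathcal{M}_\mathcal{A}$, and $R,Q$ built from $\mathcal{A}_M$ agree with those built from $\mathcal{A}$. Since $\lambda$ is carried by $\Gamma_{\mathcal{A}_M}$, the right-hand side equals $\int_{[0,s]}R(x)\,d\lambda$ on $\Gamma_{\mathcal{A}_M}$ and is constant on the gaps. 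This is exactly the primitive used in the classical definition of the level function with respect to $\lambda$ (as in \cite[Section~7]{coredecreasing}).

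\emph{Step 4.} The classical level function $(R(x))^o$ is non-increasing and $\lambda$-measurable. It is characterised by its primitive being the least concave majorant of $\Phi(s)=\int_{[0,s]}R(x)\,d\lambda$, with the concavity and integrals interpreted in the same way as in Definition \ref{deflevel}. Since $\varphi_x=\Phi$ on the relevant set, $(R(x))^o$ satisfies \ref{leveldec}--\ref{levelleast} of Definition \ref{deflevel}. Conditions \eqref{condnotempty} and \eqref{conddecr} make both objects well defined, and the uniqueness part of Theorem \ref{existslevel} then gives $f_x=(R(x))^o$ $\lambda$-a.e. Applying $Q$ and using $x^o=Q(f_x)$ completes the argument.

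The main obstacle is Step 4: reconciling the normalisations. Definition \ref{deflevel} writes $\int_0^s f$, apparently with respect to Lebesgue measure and for all $s>0$, while the classical level function for $\lambda$ uses $\lambda$-primitives only on $\Gamma_{\mathcal{A}_M}$. I would handle this as in the proof of Theorem \ref{existslevel}. There, the least concave majorant is shown to be affine on each bounded gap $(c,d)$ of $\Gamma_{\mathcal{A}_M}$ and constant beyond an unbounded gap. The same interpolation matches the $\lambda$-measure construction, since $b$ pushes each gap onto its right endpoint. The key check is that the least concave majorant of a function that is constant on the gaps agrees with that of its restriction to $\Gamma_{\mathcal{A}_M}$ interpolated linearly across the gaps.
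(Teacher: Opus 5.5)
Your proposal is correct and follows essentially the same route as the paper. In both, the key step is the identity $\varphi_x(s)=\tau(xp)=\int_{[0,\tau(p)]}R(x)\,d\lambda$, obtained from the commutation of $x$ with the core projections and formula \eqref{extfunctor}. The paper then runs the two-sided minimality comparison between $\int_{[0,\tau(p)]}R(x^o)\,d\lambda$ and $\int_{[0,\tau(p)]}(Rx)^o\,d\lambda$ directly, which is precisely the content of the uniqueness you invoke, and applies $Q$. Your explicit handling of $p\in\mathcal{A}_M$ and of the Lebesgue-versus-$\lambda$ normalisation across the gaps of $\Gamma_{\mathcal{A}_M}$ supplies details the paper leaves implicit.
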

\begin{proof}
    Since $x$ and $p$ commute, we have $\tau(|xp|) = \tau(xp)$. Therefore, using Proposition \ref{QRmaps}.\ref{QRfunctor} we get
    $$
\varphi_x(\tau(p)) = \tau(|xp|)= \tau(xp) = \int_{[0,\tau(p)]} Rx \, d\lambda, \quad \forall p \in \mathcal{A}.
    $$
    Then,
    $$
\varphi_x(\tau(p)) \leq \int_{[0,\tau(p)]} (Rx)^o \, d\lambda.
    $$
    By minimality,
    $$
\int\limits_{[0,\tau(p)]} R(x^o) \, d\lambda \leq \int\limits_{[0,\tau(p)]} (Rx)^o \, d\lambda, \quad \forall p \in \mathcal{A} 
    $$
    Conversely, notice that $s \mapsto \int_{[0,s]} R(x^o) \, d\lambda$ is a $\lambda$-concave majorant of the function $s \mapsto \int_{[0,s]} Rx  d\lambda$, therefore by minimality we get 
    $$
\int\limits_{[0,\tau(p)]} (Rx)^o \, d\lambda \leq \int\limits_{[0,\tau(p)]} R(x^o) \, d\lambda, \quad \forall p \in \mathcal{A}.
    $$
    Therefore $(Rx)^o = R(x^o)$ $\lambda$-almost everywhere. An application of $Q$ yields $x^o = QR(x^o)$ and completes the proof. 
\end{proof}

With these tools in place, we can define noncommutative down spaces.

\begin{defn}\label{defdownnorm}
    Let $(\mathcal{M},\tau,\mathcal{A})$ be a von Neumann algebra equipped with a semifinite normal faithful trace $\tau$ and a $\sigma$-bounded full ordered core $\mathcal{A}$. Consider a strongly symmetric Banach $\mathcal{M}$-bimodule of $\tau$-measurable operators such that $\mathcal{A} \subseteq E \cap E^\times$. 
    
    For each $x \in S(\mathcal{M}) \cap L^{1}_{\text{Loc},\mathcal{A}}$, define 
    \begin{equation}\label{eqdownnorm}
        \|x\|_{E^o} = \|x^o\|_{E},
    \end{equation}
    whenever $x$ satisfies (\ref{conddecr}) and (\ref{condnotempty}), otherwise set $\|x\|_{E^o} = \infty$.

    The \textbf{down space of $E$} is the collection of measurable operators
    \begin{equation}\label{downspace}
E^o = \left\{ x \in S(\mathcal{M}) \cap L^{1}_{\text{Loc},\mathcal{A}}: \|x\|_{E^o} < \infty  \right\}.
    \end{equation}
 \end{defn}

 We give a representation formula for the down norm in terms of a fully symmetric Banach function space.
\begin{thm}\label{repdownnorm}
    Let $\mathcal{M,\tau,\mathcal{A}},E$ and $x$ be as in Definition \ref{defdownnorm}, $R$ is the map from Proposition \ref{Rmap} and $f_x$ the $\mathcal{A}$-level function of $x$. Then, the equality
    \begin{equation}\label{levelrearrangement}
        \int_{0}^s f_x = \int_{0}^s \mu_{\lambda}(R(x^o)), 
    \end{equation}
    holds for all $s>0$. Moreover, there exists a u.r.i Banach function space $X_E$ of Lebesgue measurable functions on $[0,\infty)$ such that $\|x\|_{E^o} = \|\mu_{\lambda}(R(x^o))\|_{X_E}$.
\end{thm}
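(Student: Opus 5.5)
The plan has two parts: identify $R(x^o)$ with $f_x$ and compute its decreasing rearrangement with respect to $\lambda$, then transfer the norm of $E$ to a function space on $[0,\infty)$ through singular value functions.

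\textbf{Step 1: identify $R(x^o)$.} By Theorem \ref{existslevel}, $x^o = Q(f_x)$. Theorem \ref{QRmaps}\ref{RQinverse}, extended to $S(\lambda)$, then gives $R(x^o) = RQ(f_x) = f_x$ $\lambda$-almost everywhere. So (\ref{levelrearrangement}) reduces to
\[
\int_0^s f_x = \int_0^s \mu_\lambda(f_x), \qquad s>0.
\]

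\textbf{Step 2: compute $\mu_\lambda(f_x)$.} From the proof of Theorem \ref{existslevel}, $f_x$ is non-increasing, constant on every bounded component $(c,d)$ of $[0,\infty)\setminus\Gamma_{\mathcal{A}_M}$, and zero on an unbounded component. Since $\lambda$ is the pushforward of Lebesgue measure under $b$, it places mass $d-c$ on the right endpoint $d$ of such a gap and agrees with Lebesgue measure on the remaining part of $\Gamma_{\mathcal{A}_M}$. I will verify that the $\lambda$-distribution function of $f_x$ coincides with its Lebesgue distribution function, that is,
\[
\lambda\{f_x>t\} = m\{f_x>t\} \quad \text{for every } t\ge 0.
\]
This follows because $\{f_x>t\}$ is an initial interval $[0,u)$ (or $[0,u]$), and $u$ can be placed at a point of $\Gamma_{\mathcal{A}_M}$ or at the end of a constancy gap. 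The key fact is $\lambda([0,s]) = s$ for $s\in\Gamma_{\mathcal{A}_M}$, which comes from $b(t)\le s$ if and only if $t\le s$ for such $s$.

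Equal distribution functions give $\mu_\lambda(f_x) = \mu_m(f_x) = f_x$ almost everywhere, because $f_x$ is already non-increasing and right-continuous modifications do not affect integrals. The main obstacle is the value of $f_x$ at the endpoint $d$ of a gap, where the jump of $f_x$ interacts with the atom of $\lambda$. If equality almost everywhere turns out to be delicate there, I will fall back to proving only equality of the integrals $\int_0^s$, checking it first at points $s\in\Gamma_{\mathcal{A}_M}$ and then using linear interpolation across gaps, where both sides are affine.

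\textbf{Step 3: the function space $X_E$.} For $f\in L^0[0,\infty)$ with $\mu(f)$ finite-valued, I take the nonnegative function $g = \mu(f)\circ b$, which is non-increasing. Then $Q(g)$ lies in $S(\tau_{\mathcal{M}_\mathcal{A}})$, and I set $\|f\|_{X_E} = \|Q(g)\|_E$. A cleaner route is to use Theorem \ref{QRmaps}\ref{Qrearrangement}, $\mu(Qh) = \mu_\lambda(h)$, together with the standard fact that a strongly symmetric space $E$ is determined by the function space $\{f : \|\mu(f)\|<\infty\}$ on $[0,\infty)$. This space is realized by $\|f\|_{X_E} := \|y\|_E$ for any $y\in S(\tau)$ with $\mu(y) = \mu(f)$, where $\mathcal{A}\subseteq E$ makes it nontrivial. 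The existence of such a $y$ is the delicate point, since $\tau$ may be atomic, as in Example \ref{exHilbert}. I will therefore restrict $X_E$ to functions of the form $\mu(y)$, $y\in E$, and extend by the Calder\'on--Mityagin type majorization $\int_0^s\mu(f)\le\int_0^s\mu(y)$ permitted by strong symmetry.

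\textbf{Step 4: conclusion.} With $X_E$ in hand,
\[
\|x\|_{E^o} = \|x^o\|_E = \|\mu(x^o)\|_{X_E} = \|\mu_\lambda(R(x^o))\|_{X_E},
\]
where the last equality is Theorem \ref{QRmaps}\ref{Rrearrangement}. The Banach function space and u.r.i. properties of $X_E$ follow from the strong symmetry of $E$ together with the Fatou-type normality of $\mu$.
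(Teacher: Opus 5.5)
Your Steps 1, 2 and 4 are sound. Steps 1--2 are essentially the paper's argument for (\ref{levelrearrangement}). The paper combines $f_x\circ b=f_x$, the pushforward formula, $RQ=\mathrm{id}$ and $R(x^o)\circ b=\mu_\lambda(R(x^o))$. Your observation that $f_x$ has the same distribution function for $\lambda$ and for Lebesgue measure is an equivalent formulation. It needs the same normalization the paper uses tacitly: at the right endpoint $d$ of a gap $(c,d)$, $f_x(d)$ must equal the constant value of $f_x$ on $(c,d)$, since $\lambda(\{d\})=d-c$.

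The genuine gap is Step 3: you never actually produce a u.r.i.\ Banach function space $X_E$.
\begin{itemize}
\item Your first candidate $\|Q(\mu(f)\circ b)\|_E$ is not a norm. In Example \ref{exHilbert}, $f=\chi_{[0,1/2]}$ gives $\mu(f)\circ b=0$ $\lambda$-a.e.
\item The ``cleaner route'' needs a $y$ with $\mu(y)=\mu(f)$, which you concede may not exist.
\item For the fallback, the natural norm is
$$\|f\|_{X_E}=\inf\Big\{\|y\|_E:\ y\in E,\ \int_0^s\mu(f)\le\int_0^s\mu(y)\ \forall s>0\Big\}.$$
By strong symmetry it satisfies $\|\mu(x^o)\|_{X_E}=\|x^o\|_E$, so Step 4 would go through. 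Subadditivity, however, does not follow from strong symmetry. It would require, for $y_1,y_2\in E$, some $y\in E$ with $\mu(y)$ majorizing $\mu(y_1)+\mu(y_2)$ and $\|y\|_E\le\|y_1\|_E+\|y_2\|_E$. This fails once $\mathcal{M}$ has atoms of different trace.
\end{itemize}

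Here is a counterexample. Let $\mathcal{M}=\mathbb{C}^2$ with $\tau(a,b)=a+2b$, $\mathcal{A}=\{0,e_1,1\}$, and $\|y\|_E=\max\{\|y\|_\infty,\tfrac23\tau(|y|)\}$. This norm is strongly symmetric because $\|y\|_\infty=\mu(0;y)$ and $\tau(|y|)=\int_0^\infty\mu(y)$, and trivially $\mathcal{A}\subseteq E\cap E^\times$.
\begin{itemize}
\item For $f_1=\mu(e_1)=\chi_{[0,1)}$, one gets $\|f_1\|_{X_E}=1$: it is attained at $e_1$, and any admissible $y$ has $\|y\|_\infty\ge1$.
\item For $f_2=\mu(e_2)=\chi_{[0,2)}$, one gets $\|f_2\|_{X_E}=\tfrac43$: it is attained at $e_2$, and any admissible $y$ has $\tau(|y|)\ge2$.
\item Now suppose $y=(a,b)$, with $a,b\ge0$, majorizes $f_1+f_2=2\chi_{[0,1)}+\chi_{[1,2)}$. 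If $b\ge a$, then $b\ge2$, hence $\tau(|y|)\ge4$ and $\|y\|_E\ge\tfrac83$. If $a\ge b$, then $a\ge2$ and $a+b\ge3$, hence $\|y\|_E\ge\max\{a,\tfrac23(6-a)\}\ge\tfrac{12}{5}$.
\end{itemize}
So $\|f_1+f_2\|_{X_E}\ge\tfrac{12}{5}>\tfrac73=\|f_1\|_{X_E}+\|f_2\|_{X_E}$. The theorem itself survives here, for example with $\|f\|_X=\max\{\|f\|_\infty,\tfrac23\int_0^3\mu(f)\}$, but your construction does not give it.

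The missing idea is duality. A norm of the form $\sup_g\int_0^\infty\mu(f)g$, over a family of decreasing $g$, is automatically subadditive and u.r.i.; the only issue is recovering $\|\cdot\|_E$. This is what the paper does, and it is where the hypothesis $\mathcal{A}\subseteq E^\times$ enters, which your proposal never uses. Together with fullness of the core, $\mathcal{A}\subseteq E\cap E^\times$ makes the carrier projections of $E$ and $E^\times$ equal to $1$. Then \cite[Proposition~5.1.6]{dodsbook} gives a fully symmetric $X_E$ with $\|x^o\|_E=\|x^o\|_{E^{\times\times}}=\|\mu(x^o)\|_{X_E}$. Finally, Theorem \ref{QRmaps}\ref{Rrearrangement} replaces $\mu(x^o)$ by $\mu_\lambda(R(x^o))$.
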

\begin{proof}
Since $f_x$ is constant on the complement of $\Gamma_{\mathcal{A}_M}$, we have that $f_x \circ b = f_x$, where $b$ is the function from Proposition \ref{abmaps}. An application of Theorem \ref{QRmaps}.\ref{QRinverse} ,and formula (\ref{pushforward2}) yields
\begin{align*}
\int_{0}^s f_x &= \int_{0}^s f_x\circ b = \int\limits_{[0,s]} f_x \, d\lambda = \int\limits_{[0,s]} RQ f_x \, d\lambda = \int\limits_{[0,s]} R (x^o) \, d\lambda \\&= \int_{0}^s R(x^o) \circ b.
\end{align*}
The same proof as in \cite[Lemma~8.9]{coredecreasing} shows that $R(x^o) \circ b = \mu_\lambda(R(x^o))$. This completes the proof of (\ref{levelrearrangement}).

The condition that $\mathcal{A} \subseteq E \cap E^\times$ together with the condition that $\mathcal{A}$ is a full ordered core, forces that the carrier projections of $E$ and $E^\times$ are the identity operator. Therefore, an application of \cite[Proposition~5.1.6]{dodsbook} provides a fully symmetric space $X_{E}$ over $[0,\infty)$ such that $$\|x^o\|_{E} = \|x^o\|_{E^{\times \times}} = \|\mu_\tau(x^o)\|_{X_E}.$$
    Where $\mu_\tau$ is the nonincreasing rearrangement taken with respect to the trace $\tau$. By Theorem \ref{QRmaps}.\ref{Rrearrangement}, the map $R$ preserves the rearrangement of $x^o$, therefore 
    $$
\|x^o\|_{E} = \|\mu_{\lambda}(R(x^o))\|_{X_E},
    $$
    and completes the proof.
\end{proof}

With the previous representation formula, we are ready to prove that the down space is a normed $\mathbb{C}$-vector space of $S(\mathcal{M})$-measurable operators.

\begin{thm}\label{downisnormed}
    Let $\mathcal{M,\tau,\mathcal{A}},E$ and $x$ be as in Definition \ref{defdownnorm}. Let $x,y \in E^o$ and $\alpha \in \mathbb{C}$, then:
        \begin{enumerate}[label=(\roman*)]
     \item The equality $\|\alpha x\|_{E^o} = \left|\alpha\right| \|x\|_{E^o}$ holds.
     \item $\|x\|_{E^o} = 0$ if and only if $x = 0$.
     \item The triangle inequality $\|x+y\|_{E^o} \leq \|x\|_{E^o} + \|y\|_{E^o}$ holds.
\end{enumerate}
In consequence, $E^o$ is a normed complex vector space.
\end{thm}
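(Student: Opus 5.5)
The plan is to reduce all three properties to statements about the level functions $f_x$ on the half line, using the representation $\|x\|_{E^o} = \|x^o\|_E = \|\mu_\lambda(R(x^o))\|_{X_E}$ from Theorem \ref{repdownnorm}. Because $x^o = Q(f_x)$ and Theorem \ref{QRmaps} makes $R$ and $Q$ mutually inverse, rearrangement-preserving isomorphisms, a pointwise statement about $f_{\alpha x}$ or an integral inequality about $f_{x+y}$ transfers to $x^o$ and then to the $E$-norm.

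For (i), Proposition \ref{phiprop}\ref{phihomogeneous} gives $\varphi_{\alpha x} = |\alpha|\varphi_x$. For $\alpha\neq 0$, $g$ is a concave majorant of $\varphi_x$ if and only if $|\alpha| g$ is one of $\varphi_{\alpha x}$, so the least concave majorants scale. Uniqueness in Theorem \ref{existslevel} then gives $f_{\alpha x} = |\alpha| f_x$. Applying $Q$ yields $(\alpha x)^o = |\alpha| x^o$, and homogeneity of $\|\cdot\|_E$ finishes. The conditions (\ref{conddecr}) and (\ref{condnotempty}) are clearly preserved under scaling. The case $\alpha = 0$ is trivial.

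For (ii), if $\|x\|_{E^o}=0$ then $x^o=0$, hence $f_x = R(x^o)=0$. The least concave majorant $\int_0^s f_x$ then vanishes, so $\varphi_x \equiv 0$, that is, $\tau(|px|)=0$ for every $p\in\mathcal{A}_M$. Faithfulness gives $px=0$ for all $p \in \mathcal{A}$. Since the core is full and $\sigma$-bounded, I would pick $p_n\uparrow 1$ in $\mathcal{A}$; then $p_n x\to x$ (strongly on $\mathfrak{D}(x)$, or in the local measure topology), so $x=0$. The converse is immediate because $\varphi_0=0$.

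For (iii), the main step is the comparison $\int_0^s f_{x+y} \le \int_0^s (f_x+f_y)$. Writing $p=\tau_{\mathcal{A}_M}^{-1}(a(s))$ and using inequality (\ref{subaditive2}) for left multiplication, I get $\varphi_{x+y}(s)=\tau(|p(x+y)|)\le \varphi_x(s)+\varphi_y(s)$. The sum of the two least concave majorants is a concave majorant of $\varphi_{x+y}$, so minimality gives the integral inequality; this also shows that $\mathcal{F}_{x+y}$ is nonempty and that (\ref{conddecr}) holds for $x+y$. Both $f_{x+y}$ and $f_x+f_y$ are nonincreasing, so they equal their own $\lambda$-rearrangements (as in Theorem \ref{repdownnorm}), and the inequality says $\mu_\lambda(f_{x+y}) \prec \mu_\lambda(f_x+f_y)$ in the Hardy--Littlewood sense. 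The $X_E$ are fully symmetric, hence monotone under this majorization. Combined with the triangle inequality in $X_E$ and Theorem \ref{QRmaps}\ref{Rrearrangement}, this gives $\|x+y\|_{E^o}\le \|f_x\|_{X_E}+\|f_y\|_{X_E}=\|x\|_{E^o}+\|y\|_{E^o}$.

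I expect the main obstacle to be making this majorization rigorous. One issue is that the inequality is initially known only at points of $\Gamma_{\mathcal{A}_M}$ and under $\lambda$ rather than Lebesgue measure. The other is applying the strong symmetry of $E$, which requires the inequality for all $s$. I would handle both by invoking Theorem \ref{repdownnorm}'s identity $\int_0^s f = \int_0^s \mu_\lambda(R(\cdot))$ together with the fact that all functions involved are constant off $\Gamma_{\mathcal{A}_M}$.
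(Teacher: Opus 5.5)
Your proposal is correct and follows the paper's proof. Homogeneity and subadditivity of $\varphi$ are pushed through least concave majorants, (ii) uses faithfulness together with $p_n\uparrow 1$, and (iii) uses the fully symmetric space $X_E$ of Theorem \ref{repdownnorm}. Your (i), which obtains $f_{\alpha x}=|\alpha|f_x$ from uniqueness and hence $(\alpha x)^o=|\alpha|x^o$, is slightly more direct than the paper's two-sided comparison in $X_E$. The obstacle you anticipate in (iii) does not arise: $\varphi_{x+y}\le\varphi_x+\varphi_y$ holds for every $s\ge 0$, so minimality gives the integral inequality for all $s>0$ directly.

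One small point: the level operator is $(|x|)^o$ and $\tau(|p\,|z||)=\tau(|zp|)$. The estimate matching that definition is therefore (\ref{subaditive1}), as used in Proposition \ref{phiprop}\ref{philinear}, rather than (\ref{subaditive2}); the argument is otherwise identical.
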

\begin{proof}
By Proposition \ref{phiprop}.\ref{phihomogeneous}, $\varphi_{\alpha x} = |\alpha| \varphi_{x}$. If $G_{\alpha x}$ is the least concave majorant of $\varphi_{\alpha x}$ and $G_x$ is the least concave majorant of $\varphi_x$, it follows that $|\alpha| G_x$ is a concave majorant of $\varphi_{\alpha x}$ and that $\frac{1}{|\alpha|} G_{\alpha x}$ is a concave majorant of $\varphi_x$. Therefore,
$$
\int_{0}^s f_{\alpha x} \leq \int_{0}^s f_{x}, \quad \text{and} \int_{0}^s f_{x} \leq \int_{0}^s \frac{1}{|\alpha|} f_{\alpha x}, \quad \forall s>0.
$$
Using the fact that $Q f_x = x^o$, $Q f_{\alpha x} = (\alpha x)^o$, and formula \ref{levelrearrangement} we get
$$
\int_{0}^s \mu_\lambda \big(R((\alpha x)^o)\big) \leq \int_{0}^s |\alpha| \mu_{\lambda}\big(R(x^o)\big),$$
and  $$\int_{0}^s \mu_\lambda\big(R(x^o)\big) \leq \int_0^s \frac{1}{|\alpha|} \mu_\lambda\Big( R((\alpha x)^o) \Big)   
$$
hold for all $s > 0$. Let $E_X$ be the u.r.i function norm provided by Theorem \ref{repdownnorm}, from the above inequalities, we conclude that $$\|\mu_\lambda(R((\alpha x)^o)\|_{E_X} \leq \||\alpha| \mu_\lambda(R( x)^o)\|_{E_X}$$ and that $\|\mu_\lambda(R( x)^o)\|_{E_X} \leq \|\frac{1}{|\alpha|}\mu_\lambda(R((\alpha x)^o))\|_{E_X}$. Therefore, $\|\mu_\lambda(R((\alpha x)^o)\|_{E_X} = |\alpha|  \|\mu_\lambda(R( x)^o)\|_{E_X}$, and an application of Theorem \ref{repdownnorm} shows that $\|\alpha x\|_{E^o} = |\alpha|\|x\|_{E^o}$ and proves statement (i).

To prove statement (ii). It is clear that if $x = 0$, then $\varphi_x \equiv 0$ and we get $x^o = 0$, therefore $\|x\|_{E^o} = \|x^o\|_{E} = 0$. Conversely, suppose that $\|x\|_{E^o} = 0$. Then $\|x^o\|_{E} = 0$, hence $x^o = 0$ and $R(x^o) = 0$. By construction of $x^o$ we must have that
$$
\varphi_{x}(s) \leq \int_{0}^s \mu_\lambda(R(x^o)) = 0.
$$
Therefore, $\tau(|xp|) = 0$ for all $p \in \mathcal{A}$. Since $||x|p| = |xp|$ and the trace is faithful, we conlcude that $|x|p = 0$. Letting $p_n \uparrow 1$ yields $|x| = 0$ and completes the proof of (ii).

To prove (iii), from Proposition \ref{phiprop}.\ref{philinear} we have the inequality $\varphi_{x+y} \leq \varphi_x + \varphi_y$. Therefore the inequality
$$
\varphi_{x+y}(s) \leq \int_{0}^s \mu_\lambda\big( R(x^o)\big) + \int_{0}^s \mu_\lambda\big( R(y^o)\big) = \int_{0}^s \Big( \mu_\lambda\big( R(x^o)\big) + \mu_\lambda\big( R(y^o)\big) \Big)  
$$
holds for all $s > 0$.
Since $\mu_\lambda\big( R(x^o)\big) + \mu_\lambda\big( R(y^o)\big)$ is a decreasing function, its integral is a concave majorant of $\varphi_{x+y}$. Thus,
$$
\int_{0}^s \mu_\lambda\big( R((x+y)^o) \big) \leq \int_{0}^s \Big( \mu_\lambda\big( R(x^o)\big) + \mu_\lambda\big( R(y^o)\big) \Big), \quad \forall s>0.  
$$
Since $X_E$ is u.r.i, we get the inequalities 
\begin{align*}
  \|\mu_\lambda\big( R((x+y)^o) \big)\|_{E_X} &\leq \|\mu_\lambda\big( R(x^o)\big) + \mu_\lambda\big( R(y^o)\big)\|_{E_X}\\
  &\leq \|\mu_\lambda\big( R(x^o)\big)\|_{E_X} + \|\mu_\lambda\big( R(y^o)\big)\|_{E_X}.  
\end{align*}
Another application of Theorem \ref{repdownnorm} yields $\|x+y\|_{E^o} \leq \|x\|_{E^o} + \|y\|_{E^o}$ and completes the proof.
\end{proof}

In the following example, we compute explicitly the $\mathcal{A}$-level function of some operators, and we show that the equality $\|x^*\|_{E^o} = \|x\|_{E^o}$ may fail.

\begin{ex}
    Let $\mathcal{M},\tau,\mathcal{A}$ be from Examples \ref{exHilbert} and \ref{exHilbert2}. Consider the operator $x \in B(H)$ defined by
    $$
x(e_1) = e_1, \quad x(e_2) = 2e_1 + e_2, \quad x(e_k) = 0, \quad \forall k>2.
    $$
A direct computation shows that 
$$
|p_1 x|(e_1) = \frac{1}{\sqrt{5}} e_1 + \frac{2}{\sqrt{5}} e_2, \quad |p_1 x|(e_2) = \frac{2}{\sqrt{5}} e_1 + \frac{4}{\sqrt{5}} e_2, \quad |p_1x|(e_k) = 0,  
$$
for all $k>2$, and $|p_jx| = |x|$ for all $j > 1$, which is given by
$$
|x|(e_1) = \frac{1}{\sqrt{2}} e_1 + \frac{1}{\sqrt{2}} e_2, \quad |x|(e_2) = \frac{1}{\sqrt{2}} e_1 + \frac{3}{\sqrt{2}} e_2, \quad |x|(e_k) = 0, \forall k>2.  
$$
It follows that 
$$
\varphi_x = \sqrt{5} \chi_{[1,2)} + 2\sqrt{2} \chi_{[2,\infty)}.
$$
The least concave majorant of $\varphi_x$ is given by
$$
G_{x}(s) = \int_{0}^s \Big(\sqrt{5} \, \chi_{(0,1]}(t) + \big( 2\sqrt{2} - \sqrt{5}\big)\chi_{(1,2]}(t) \Big)  \, dt,
$$
thus $f_x = \sqrt{2} \chi_{(0,2]}$. Applying the operator $Q$ computed in Example \ref{exHilbert2} yields
$$
x^o(\xi)  = Q(f_x)(\xi) = \sqrt{5} \langle \xi, e_1 \rangle e_1 + \big(2\sqrt{2}-\sqrt{5}\big) \langle \xi, e_2 \rangle e_2, 
$$
and $\|x\|_{(L^{\infty}(\tau))^o} = \|x^o\|_{B(H)} = \sqrt{5}$.

Its adjoint is defined by
    $$
x^*(e_1) = e_1 + 2e_2, \quad x^*(e_2) = e_1, \quad x^*(e_k) = 0, \quad \forall k>2. 
    $$
Similarly, 
$$
|p_1 x^*|(e_1) = e_1 , \quad |p_1 x^*|(e_k) =  0, \quad \forall k>1,  
$$
and $|p_jx^*| = |x^*|$ for all $j > 1$, which is given by
$$
|x^*|(e_1) = \frac{3}{\sqrt{2}} e_1 + \frac{1}{\sqrt{2}} e_2, \quad |x|(e_2) = \frac{1}{\sqrt{2}} e_1 + \frac{1}{\sqrt{2}} e_2, \quad |x|(e_k) = 0, \forall k>2.  
$$
It follows that 
$$
\varphi_{x^*} = 1 \chi_{[1,2)} + 2\sqrt{2} \chi_{[2,\infty)}.
$$
The least concave majorant of $\varphi_x$ is given by
$$
G_{x^*}(s) = \int_{0}^s \sqrt{2} \chi_{(0,2]}(t) \, dt,
$$
thus $f_{x^*} = \sqrt{2} \chi_{(0,2]}$. Applying the operator $Q$ computed in Example \ref{exHilbert2} yields
$$
(x^*)^o(\xi)  = Q(f_{x^*})(\xi) = \sqrt{2} \langle \xi, e_1 \rangle e_1 + \sqrt{2} \langle \xi, e_2 \rangle e_2, 
$$
and $\|x^*\|_{(L^{\infty}(\tau))^o} = \|(x^*)^o\|_{B(H)} = \sqrt{2}$.

Therefore, for this operator, the strict inequality $\| x^*\|_{\mathcal{M}^o} < \| x\|_{\mathcal{M}^o}$ holds.

This example also shows that the hypothesis $x \in S(\mathcal{M}_{\mathcal{A}})$ in Theorem \ref{levelfunctionextends} is necessary. To see this, notice that 
$$
R(|x|) = \frac{\sqrt{2}}{2} \, \chi_{(0,1]} + \frac{3\sqrt{2}}{2} \, \chi_{(1,2]},
$$
thus $\big(R(|x|)\big)^o = \sqrt{2} \, \chi_{(0,2]}$, hence $Q \Big( \big(R(|x|)\big)^o \Big) = (x^*)^o \not= x^o$.

\end{ex}

We conclude with the following description of the down spaces for $L^{1}(\tau)$ and $\mathcal{M}$.

\begin{thm}
    Let $\mathcal{M}$ be a von Neumann algebra equipped with a semifinite normal faithful trace $\tau$ and a $\sigma$-bounded full ordered core $\mathcal{A}$. Then $(L^{1}(\tau))^o = L^{1}(\tau)$ with equality of norms and $\mathcal{M} \subseteq (\mathcal{M})^o$, where the inclusion can be proper.
\end{thm}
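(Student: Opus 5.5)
For $L^{1}(\tau)$ I will compute $\varphi_x$ at infinity. For $x \in L^{1}(\tau)$ the preceding proposition guarantees that (\ref{conddecr}) and (\ref{condnotempty}) hold, so $x^o$ exists. Since $x^o$ is core decreasing and $R$ is trace preserving on $S(\tau_{\mathcal{M}_\mathcal{A}})$ (Theorem \ref{QRmaps}), we get $\|x^o\|_{L^1(\tau)} = \int_0^\infty f_x = \lim_{s\to\infty} G_x(s)$, where $G_x$ is the least concave majorant of $\varphi_x$.

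The key observation is that $\varphi_x$ is increasing and bounded by $\tau(|x|)$. To see the bound, write $\varphi_x(s) = \tau(|p x|)$ with $\|p\|\le 1$ and use the bimodule property of $L^1(\tau)$. Therefore the constant $\sup_s \varphi_x(s)$ is a concave majorant, and $\lim G_x = \sup_s \varphi_x(s)$.

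It remains to show $\sup_s\varphi_x(s) = \tau(|x|)$. Take $p_n \uparrow 1$ in $\mathcal{A}$, which is possible since the core is $\sigma$-bounded and full. Then $p_nx \to x$ in the measure topology, and dominated convergence in $L^1(\tau)$ gives $\tau(|p_nx - x|)\to 0$. This is the step where I expect some care: I would argue through the duality $\tau(|y|)=\sup\{|\tau(zy)| : \|z\|\le1\}$ and normality. Alternatively, I would show $\tau(|p_n x|)\uparrow\tau(|x|)$ directly, using the monotonicity $|p_nx|\le|p_{n+1}x|$ from the proof of Proposition \ref{phiprop}.\ref{phiincreasing}, and identifying the supremum through $|p_nx|^2 = x^*p_nx \uparrow x^*x$ together with operator monotonicity of the square root.

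This gives $\|x\|_{L^1(\tau)^o}=\|x\|_{L^1(\tau)}$. For the converse inclusion, if $\|x\|_{(L^1)^o}<\infty$ then $\sup_s\varphi_x(s)\le \int_0^\infty f_x<\infty$. The same monotone limit then gives $\tau(|x|)<\infty$. Since $\tau(|x|)$ is finite, $x$ is $\tau$-measurable, so $x \in L^1(\tau)$.

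For $\mathcal{M}$, take $x\in\mathcal{M}$. The proof of the preceding proposition gives $\varphi_x(s)\le \|x\|_{B(H)}\,s$, a linear concave majorant. Hence $\int_0^s f_x\le \|x\| s$, and since $f_x$ is decreasing, $f_x\le\|x\|$ a.e. Thus $\|x^o\|_{\mathcal M}=\|f_x\|_{L^\infty_\lambda}\le\|x\|$, using that $Q$ is an isometric $*$-isomorphism onto $\mathcal{M}_\mathcal{A}$. So $\mathcal{M}\subseteq\mathcal{M}^o$, in fact contractively.

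For properness I will use the commutative case $[0,\infty)$ with $\mathcal{A}=\{\chi_{[0,y]}\}$, where Theorem \ref{levelfunctionextends} reduces the problem to the classical level function. The function $g(t)=t^{-1/2}\chi_{(0,1)}$ is unbounded, yet its level function is $g$ itself, so $g\notin\mathcal{M}^o$. Instead I need an unbounded $g$ with bounded level function. A function like $g=\sum_k k\chi_{[k,k+1/k]}$ from Example \ref{ejemplo} works: $\int_0^s g\le s+C$, and I need a concave majorant of the form $\int_0^s f$ with $f$ bounded. Since $\int_0^s g \le 2s$ roughly for $s\ge1$ and $\int_0^s g = 0$ for $s<1$, the least concave majorant has slope bounded by a constant. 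Hence $g^o\in L^\infty$ while $g\notin L^\infty$, and also $g\notin S(\tau)$. Checking this slope bound concretely is the only computation needed.
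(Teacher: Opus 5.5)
Your proposal is correct and follows essentially the same route as the paper: the constant concave majorant $\tau(|x|)$ with a limit along $p_n\uparrow 1$ for $L^{1}(\tau)$, the linear majorant $s\mapsto\|x\|_{B(H)}s$ for $\mathcal{M}$, and reduction to the classical level function on the half-line for properness. You also supply details the paper leaves implicit: the identification $\sup_s\varphi_x(s)=\tau(|x|)$, the reverse inclusion $(L^{1}(\tau))^o\subseteq L^{1}(\tau)$, and an explicit witness $g$, for which indeed $s-1\le\int_0^s g\le s$ for $s\ge 1$, so $g^o\equiv 1$. For the first of these, prefer your monotone route via $|p_nx|^2=x^*p_nx\uparrow x^*x$, since $p_n\not\to 1$ in measure when $\tau(1)=\infty$.
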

\begin{proof}
Let $x \in L^{1}(\tau)$, notice that 
$$
\varphi_x(\tau(p)) \leq \int\limits_{[0,\tau(p)]} R(x^o) \, d\lambda \leq \|x\|_{L^{1}(\tau)}, \quad \forall p \in \mathcal{A}.  
$$
Thus $\varphi_x(\tau(p)) \leq \tau(x^o p) \leq \tau(|x|)$. Choose a sequence $p_n \uparrow 1$ to get $\tau(x^o) = \tau(|x|)$. Thus $\|x\|_{(L^{1}(\tau))^o} = \|x\|_{L^{1}(\tau)}$.

If $y \in \mathcal{M}$, it was already shown that
$$
\int\limits_{[0,\tau(p)]} R(y^o) \, d\lambda \leq \int\limits_{[0,\tau(p)]} \|y\|_{B(H)} \, d\lambda, \quad \forall p \in \mathcal{A}.
$$
An application of the norm in $L^{\infty}_\lambda$ together with the fact that $R(y^o)$ is decreasing yields the inequality $\|R(y^o)\|_{L^{\infty}_\lambda} \leq \|y\|_{B(H)}$. Therefore, $y^o \leq \|y\|_{B(H)} 1_{B(H)}$. An application of the norm in $B(H)$ shows that $\|y\|_{\mathcal{M}^o} \leq \|y\|_{B(H)}$.

Theorem \ref{levelfunctionextends} and the fact that, for the classical down space $(L^{\infty})^o \not= L^{\infty}$, show that the inclusion can be proper.
    
\end{proof}

\printbibliography

\end{document}